\documentclass[]{article}
\usepackage{lipsum}
\usepackage{amsmath}
\usepackage{emptypage}
\usepackage{hyperref}
\hypersetup{colorlinks=true,
	linkcolor=blue,
	filecolor=magenta,     
	urlcolor=cyan,
	}
\usepackage[toc,page]{appendix}
\usepackage{enumerate}
\usepackage[utf8]{inputenc}
\usepackage[english]{babel}
\usepackage{amsfonts}
\usepackage{amssymb}
\usepackage{makecell}
\usepackage{amsthm}
\usepackage{array}
\usepackage{amsmath}
\usepackage{pgfplots}
\pgfplotsset{compat=1.18}
\usepackage{cases}
\usepackage{tikz}
\usepackage[a4paper, total={6in, 8.3in}]{geometry}
\newcommand{\g}{{\mathfrak g}}

\newcommand{\aaa}{{\mathfrak a}}
\newcommand{\nil}{{\mathfrak {nil}}}

\newcommand{\R}{{\mathbb R}}
\newcommand{\N}{{\mathbb N}}
\newcommand{\C}{{\mathbb C}}
\newcommand{\G}{{\mathcal {G}}}

\newcommand{\h}{{\mathfrak h}}

\newcommand{\n}{{\mathfrak n}}
\DeclareMathOperator{\im}{Im}
\DeclareMathOperator{\Id}{Id}
\DeclareMathOperator{\Aut}{Aut}
\DeclareMathOperator{\Spec}{Spec}
\DeclareMathOperator{\Span}{Span}
\DeclareMathOperator{\End}{End}
\DeclareMathOperator{\sgn}{sgn}
\DeclareMathOperator{\Der}{Der}
\DeclareMathOperator{\ord}{ord}
\DeclareMathOperator{\Dg}{Dg}

\DeclareMathOperator{\Car}{Car}
\DeclareMathOperator{\Tr}{Tr}
\theoremstyle{theorem}
\newtheorem{theorem}{Theorem}[section]
\newtheorem{lemma}[theorem]{Lemma}
\newtheorem{proposition}[theorem]{Proposition}
\newtheorem{corollary}[theorem]{Corollary}

\theoremstyle{definition}
\newtheorem{definition}[theorem]{Definition}
\newtheorem{example}[theorem]{Example}
\newtheorem{question}[theorem]{Question}
\newtheorem{notation}[theorem]{Notation}
\newtheorem{remark}[theorem]{Remark}
\usepackage{cleveref}
\title{Nice bases for Lie algebras}
\date{\vspace{-5ex}}
\author{Jonas Deré\footnote{KU Leuven Kulak, Kortrijk, Belgium.  \href{mailto: jonas.dere@kuleuven.be}{jonas.dere@kuleuven.be}.} \and Jeroen Gantois \footnote{KU Leuven Kulak, Kortrijk, Belgium. \href{mailto: jeroen.gantois@kuleuven.be}{jeroen.gantois@kuleuven.be}. Research supported by Methusalem grant METH/21/03 long-term structural funding of the Flemish government.}}
\begin{document}
	\maketitle
\begin{abstract}
The concept of a nice basis for a Lie algebra was introduced to study the Ricci curvature on nilpotent Lie groups equipped with a left-invariant metric. Despite the many applications in differential geometry, for example in the construction of Einstein manifolds, very little is known about the existence and number of nice bases on a given Lie algebra. This paper studies this question for three classes of Lie algebras, namely direct sums, almost abelian ones and nilpotent Lie algebras associated to a graph. As an application we compute the number of nice bases for Lie algebras up to dimension $3$, and show that for a general Lie algebra the existence depends on the field over which it is defined. Moreover, for every natural number $n$ we give an indecomposable Lie algebra such that there exists exactly $n$ nice bases up to equivalence.
\end{abstract}
\section{Introduction}
	The Ricci curvature tensor of a connected Lie group equipped with a left-invariant metric can be expressed in terms of its associated metric Lie algebra. Although the formula is rather involved, the existence of a nice basis for the Lie algebra considerably simplifies calculations. In \cite[Lemma 3.9]{Lauret-Will-first-mention} the concept of a nice basis was introduced, without the name which originated only later. 
		\begin{definition}
		A basis $\{X_1,\dots,X_n\}$ for a Lie algebra $\g$ is \textbf{nice} if the following two conditions on the structure constants $c_{ij}^k$ are satisfied.
		\begin{enumerate}[(1)]
			\item For all $i, j \in \{1, \ldots, n\}$ there exists at most one $k \in \{1, \ldots, n\}$ such that $c_{ij}^k\neq 0.$
			\item If for any $i, j, k, s, t \in \{1, \ldots, n\}$ it holds that if $c_{ij}^k\neq 0$ and $c_{st}^k\neq 0$, then either $\{i,j\}=\{s,t\}$ or $\{i,j\}\cap\{s,t\}=\emptyset.$
		\end{enumerate}
	\end{definition} 
	The paper \cite{Lauret-Will-first-mention} shows that a nice basis for a nilpotent Lie algebra is always stably Ricci-diagonal, meaning that any inner product for which the basis is orthogonal has a diagonal Ricci tensor. In \cite{Lauret-Will-6dim} the converse was proven, i.e.~any stably Ricci-diagonal basis for a nilpotent Lie algebra must be nice. The definition of a nice basis depends only on the Lie algebra itself, so determining which nilpotent Lie algebras have a stably Ricci diagonal basis becomes an algebraic problem. Note that the definition of a nice basis can be given for Lie algebras over an arbitrary field. Unless mentioned otherwise, we will always restrict ourselves to the field $\R$ as this is where nice bases have most of their applications.

	The existence of such a special basis is a property that has been used a lot in the literature. For example, in \cite{Milnor} Milnor used the fact that any $3$-dimensional unimodular Lie algebra has a stably Ricci-diagonal basis to classify the possible signatures for the Ricci curvature of the associated Lie groups. Stably Ricci-diagonal bases are also used to study Ricci flow, see \cite{Lauret-Will-first-mention,PayneTracy}, and Ricci negative Lie groups as in \cite{Deré-Lauret,Lauret-Will-RN}.  Moreover, \cite[Theorem 3]{Nikolayevsky} provides us with an easy-to-check condition for a nilpotent Lie algebra with a nice basis to be an Einstein nilradical. This illustrates that the existence of a nice basis for a given Lie algebra is a very useful property, however it is not an easy task to classify which Lie algebras admit such a basis. 

	 Many nilpotent Lie algebras have a natural basis that is nice, for example, there is only one nilpotent Lie algebra up to dimension $6$ that does not have one. This first example of a nilpotent Lie algebra not admitting a nice basis is given in \cite[Proposition 2.1]{Lauret-Will-6dim}. In \cite[Theorem 3.11]{Conti-Rossi-Ad-invariant}, more examples of nilpotent Lie algebras not admitting a nice basis are provided. 
	Note that if $\mathcal{B}$ is a nice basis for a Lie algebra $\g$, then we can construct another nice basis from $\mathcal{B}$ by multiplying the elements of $\mathcal{B}$ with nonzero multiples. This motivates the definition of an equivalence relation on the set of nice bases of a given Lie algebra $\g$, as introduced in \cite{Conti-Rossi}.
		\begin{definition}
		We say that two nice bases $\mathcal{B}_1$ and $\mathcal{B}_2$ for a Lie algebra $\g$ are equivalent if there exists a Lie algebra automorphism sending elements of $\mathcal{B}_1$ to multiples of elements of $\mathcal{B}_2$.
	\end{definition}
	\noindent The equivalence relation defined above will be denoted by `$\sim$' and the number of equivalence classes will be denoted by $\nu_\g$. This all motivates the question of describing nice bases on a Lie algebra up to equivalence.
	
		\begin{question}
		Given a Lie algebra $\g$, how many nice bases does $\g$ have up to equivalence, i.e.~what is $\nu_\g$?
	\end{question}
	
	In \cite{Conti-Rossi} it is proven that this number can be at most countable for finite dimensional real nilpotent Lie algebras. Later it was shown in \cite{Gorbatsevich} that the number is finite for these Lie algebras. Moreover, in the same paper it has been shown that the number of nice bases for a Lie algebra can be arbitrary large by arguing that 
	\[\bigoplus_{i=1}^{2k}\h_3\]
		has at least $k+1$ nice bases up to equivalence with $\h_3$ the $3$-dimensional Heisenberg Lie algebra. The exact number of nice bases is unknown in general for these direct sums. There are also known examples of indecomposable nilpotent Lie algebras with multiple nice bases, see \cite[Proposition 3.1]{Conti-Rossi}. This raises the question whether we can find for every natural number $n$ an indecomposable Lie algebra which has exactly $n$ nice bases up to equivalence, for which we will present a positive answer.

	 In \cite{Conti-Rossi} an algorithm is introduced to classify the nilpotent Lie algebras admitting a nice basis for a fixed dimension. The output is presented as a table containing the $n$-dimensional nilpotent Lie algebras with a nice basis defined by their structure equations. If a Lie algebra admits $k$ nice bases up to equivalence, the algorithm prints this Lie algebra $k$ times, once for each non-equivalent nice basis. Therefore, the number of nilpotent Lie algebras that the algorithm gives us is not equal to the number of $n$-dimensional nilpotent Lie algebras which admit a nice basis. Furthermore, if you want to determine for a specific $n$-dimensional nilpotent Lie algebra how many nice bases it admits, you should check whether each entry in the table produced by the algorithm is isomorphic to your given Lie algebra. As these tables get increasingly larger if the dimension increases, this task becomes more difficult for high dimensions. Furthermore this algorithm is useless when studying infinite families of nilpotent Lie algebras.

	In this paper we will be concerned with three natural classes of Lie algebras. The first class will be the class of decomposable Lie algebras. In \cite{Conti-Rossi-Ad-invariant,Nikolayevsky} the idea of using the \textit{pre-Einstein derivation} in relation to nice bases was introduced and we will use this idea in \Cref{sect. direct-sum} to prove results on the existence of a nice basis for direct sum Lie algebras. 
	
	In \Cref{sect. Almost-abelian} we will consider the class of almost abelian Lie algebras, which is already briefly mentioned in \cite{Gorbatsevich}. As there exists a unique nice basis on every abelian Lie algebra, this is a natural extension of a known class. We will give necessary and sufficient conditions on such Lie algebras to admit a nice basis and if such a basis exists we will be able to say how many there are. It will turn out that the existence of a nice basis depends on the field over which you are working. In \Cref{sect. 3-dim} we will use the results obtained in the previous section to provide an overview of the number of nice bases for all $3$-dimensional Lie algebras. 

	Finally, we will consider nilpotent Lie algebras associated to graphs in \Cref{sect. graphs}. Originally, the class of two-step nilpotent Lie algebras associated to graphs was introduced in \cite{Mainkar05} to study Anosov diffeomorphisms. These Lie algebras were also used in the context of nilmanifolds in \cite{DeCosteDeMeyerMainkar18,Pouseele-Tirao09} and for arbitrary nilpotency classes they were studied in \cite{Dere-Witdouck23,Mainkar06}. As this class of Lie algebras is widely used throughout the literature, it is useful to know which of these Lie algebras admit a nice basis. For two-step nilpotent Lie algebras this was already discussed in \cite{Lauret-Will-first-mention} to study Einstein solvmanifolds. We will provide necessary and sufficient conditions for a general $c$-step nilpotent Lie algebra associated to a graph to admit a nice basis.
	\section{Nice bases for direct sum Lie algebras}\label{sect. direct-sum}
	
Suppose $\g$ and $\h$ are arbitrary Lie algebras. If $\g$ and $\h$ admit a nice basis, say $\mathcal{B}_1=\{X_1,\dots,X_n\}$ and $\mathcal{B}_2=\{Y_1,\dots,Y_m\}$ respectively then clearly
\[\{(X_1,0),\dots,(X_n,0),(0,Y_1),\dots,(0,Y_m)\}\]
forms a nice basis for the direct sum Lie algebra $\g\oplus\h.$ For simplicity of notation, we will denote this new basis by $\mathcal{B}_1\oplus\mathcal{B}_2$. The following question arises.
\begin{question}\label{question}
	Does it hold that the existence of a nice basis for $\g\oplus \h$ implies the existence of a nice basis for $\g$ and $\h$, moreover can we describe $\nu_{\g\oplus\h}$ in terms of $\nu_{\g}$ and $\nu_{\h}$?\end{question}	
	
	For the terms of the upper and lower central series of a Lie algebra $\g$ we will use the notation $Z_i(\g)$ and $\gamma_j(\g)$ respectively, where $i\in\N_{\geq 1}$ and  $j\in\N_{\geq 2}$.
	An important tool for studying nice bases is the following.
	\begin{lemma}\label[lemma]{lem. UCS/LCS-adapted}
		If a Lie algebra $\g$ admits a nice basis $\mathcal{B}$, then $Z_i(\g)$ and $\gamma_j(\g)$ are spanned by $\mathcal{B}\cap Z_i(\g)$ and $\mathcal{B}\cap\gamma_j(\g)$ respectively.
	\end{lemma}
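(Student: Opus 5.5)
Both families of ideals can be handled by induction, and only condition (1) of a nice basis is needed. Condition (1) says that for every pair $i,j$ the bracket $[X_i,X_j]$ is a scalar multiple of a single basis vector $X_k$, possibly zero. Call a subspace $V\subseteq\g$ \emph{$\mathcal{B}$-adapted} if it is spanned by $\mathcal{B}\cap V$. Equivalently, if we write $v\in V$ as $v=\sum_k a_kX_k$, then every $X_k$ with $a_k\neq0$ lies in $V$.

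For the lower central series we induct on $j$, taking $\gamma_1(\g)=\g$ as the trivial base case. Assume $\gamma_j(\g)$ is spanned by $\mathcal{B}_j:=\mathcal{B}\cap\gamma_j(\g)$. Then $\gamma_{j+1}(\g)=[\g,\gamma_j(\g)]$ is spanned by the brackets $[X_i,X_l]$ with $X_i\in\mathcal{B}$ and $X_l\in\mathcal{B}_j$. By condition (1), each such bracket is either zero or a nonzero multiple of some basis vector $X_k$, and such an $X_k$ then lies in $\gamma_{j+1}(\g)$. So $\gamma_{j+1}(\g)$ is spanned by basis vectors lying in it, and hence by $\mathcal{B}\cap\gamma_{j+1}(\g)$. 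This finishes the induction.

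For the upper central series we also induct, starting from $Z_0(\g)=0$ and using $Z_{i+1}(\g)=\{x\in\g : [x,\g]\subseteq Z_i(\g)\}$. Assume $Z_i(\g)$ is $\mathcal{B}$-adapted, and let $x=\sum_k a_kX_k\in Z_{i+1}(\g)$. We must show that $X_k\in Z_{i+1}(\g)$ whenever $a_k\neq0$, i.e.\ that $[X_k,X_m]\in Z_i(\g)$ for every $m$. Fix $m$ and consider
\[
[x,X_m]=\sum_k a_k[X_k,X_m]\in Z_i(\g).
\]
Each nonzero summand $a_k[X_k,X_m]$ is a nonzero multiple of a basis vector $X_{\phi(k)}$. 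The key point, and the main obstacle since this is the only place cancellation could occur, is that $k\mapsto\phi(k)$ is injective for fixed $m$. Indeed, if $[X_k,X_m]$ and $[X_{k'},X_m]$ are both nonzero multiples of the same $X_p$, then $c_{km}^p\neq0$ and $c_{k'm}^p\neq0$. Condition (2) then forces $\{k,m\}=\{k',m\}$ or $\{k,m\}\cap\{k',m\}=\emptyset$. The second option is impossible because both sets contain $m$, so $k=k'$.

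Injectivity means the nonzero terms in the sum involve pairwise distinct basis vectors, so no cancellation occurs. The coefficient of $X_{\phi(k)}$ in $[x,X_m]$ is then exactly $a_k c_{km}^{\phi(k)}\neq0$. Since $Z_i(\g)$ is $\mathcal{B}$-adapted, this gives $X_{\phi(k)}\in Z_i(\g)$, so $[X_k,X_m]\in Z_i(\g)$. As this holds for all $m$, we get $X_k\in Z_{i+1}(\g)$. Hence $Z_{i+1}(\g)$ is spanned by $\mathcal{B}\cap Z_{i+1}(\g)$, which completes the induction.
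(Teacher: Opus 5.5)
Your proof is correct. The paper gives no argument of its own for this lemma. It attributes the lower central series part to Lauret--Will and the upper central series part to Conti--Rossi, so your write-up is a self-contained proof rather than a variant of one in the text. The lower central series induction needs only condition (1). In the upper central series step you correctly identify condition (2) as exactly what rules out cancellation among the terms $a_k[X_k,X_m]$. Working directly with $Z_{i+1}(\g)=\{x:[x,\g]\subseteq Z_i(\g)\}$ and the coordinate characterization of adapted subspaces is also convenient. An induction through $\g/Z(\g)$ would need the extra (easy) check that the images of the non-central basis vectors form a nice basis of the quotient.

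One correction: your opening sentence says only condition (1) is needed for both families. That contradicts your own upper central series argument, and it is in fact false. The basis $\{X_1,X_2,X_3,X_4\}$ with $[X_1,X_2]=[X_1,X_3]=X_4$ (all other brackets zero) satisfies (1) but not (2). Its center is $\Span\{X_2-X_3,X_4\}$, which is not spanned by the basis vectors it contains. You should restrict that claim to the lower central series.
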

	For the lower central series this was noted in \cite{Lauret-Will-6dim} and for the upper central series it was first noted in \cite{Conti-Rossi-Ad-invariant}. The lemma above can also be expressed in words by saying that nice bases are \textit{adapted} to the upper and lower central series of the Lie algebra.

	 The answer turns out to be positive if $\g$ or $\h$ are abelian, this is part of the following Theorem. 
	\begin{theorem}\label{thm. |g|=|g+Rn|}
		Let $\g$ be a Lie algebra and let $\aaa$ be an abelian Lie algebra. If $\g\oplus\aaa$ admits a nice basis then $\g$ admits a nice basis as well. Moreover, it holds that
		\[\nu_{\g\oplus\aaa}=\nu_\g.\]
	\end{theorem}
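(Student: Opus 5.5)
The plan is to build a nice basis of $\g$ directly out of a nice basis of $\g\oplus\aaa$, using \Cref{lem. UCS/LCS-adapted} for the two ideals $Z(\g\oplus\aaa)=Z(\g)\oplus\aaa$ and $\gamma_2(\g\oplus\aaa)=\gamma_2(\g)\subseteq\g$. I then compare equivalence classes through the map $[\mathcal{B}_1]\mapsto[\mathcal{B}_1\oplus\mathcal{A}]$, where $\mathcal{A}$ is any basis of $\aaa$. This map is well defined on classes: scalings and automorphisms of $\g$ extend to $\g\oplus\aaa$, and any two bases of $\aaa$ differ by an automorphism of $\aaa$. Throughout, $\pi:\g\oplus\aaa\to\g$ denotes the projection, which is a Lie algebra homomorphism.

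\emph{Existence.} Let $\mathcal{B}$ be a nice basis of $\g\oplus\aaa$ and write $Z=Z(\g\oplus\aaa)$. By \Cref{lem. UCS/LCS-adapted}, $\mathcal{B}\cap Z$ spans $Z$ and $\mathcal{B}\cap\gamma_2$ spans $\gamma_2$. I claim that $(\mathcal{B}\setminus Z)\cup(\mathcal{B}\cap\gamma_2)$ is linearly independent modulo $\aaa$. Suppose a linear combination of these elements lies in $\aaa\subseteq Z$. The elements of $\mathcal{B}\setminus Z$ are independent modulo $Z$, so their coefficients vanish. What remains is an element of $\gamma_2\cap\aaa=0$, so all coefficients vanish. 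Since $\mathcal{B}$ spans everything, we can extend this set by further elements of $\mathcal{B}$ (necessarily central ones) to a subset $C\subseteq\mathcal{B}$ whose image is a basis of $(\g\oplus\aaa)/\aaa$.

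Then $\operatorname{span}C$ is a subalgebra, because all brackets lie in $\gamma_2=\operatorname{span}(\mathcal{B}\cap\gamma_2)\subseteq\operatorname{span}C$. Its structure constants with respect to $C$ are those of $\mathcal{B}$ restricted to $C$, so $C$ is nice. Moreover, $\pi|_{\operatorname{span}C}$ is an isomorphism onto $\g$. Hence $\pi(C)$ is a nice basis of $\g$.

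\emph{Surjectivity of the class map.} With $C$ as above, put $D=\mathcal{B}\setminus C\subseteq Z$. Define $\psi$ to be $\pi$ on $\operatorname{span}C$ and to send $D$ bijectively onto a basis $\mathcal{A}$ of $\aaa$; the counts match since $|D|=\dim\aaa$. This $\psi$ is an automorphism: $D$ is central with central images, brackets inside $\operatorname{span}C$ are respected, and $\psi$ is bijective by dimension count. Therefore $\mathcal{B}\sim\pi(C)\oplus\mathcal{A}$.

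\emph{Injectivity.} This is the step I expect to be the main obstacle. Suppose $\varphi$ is an automorphism of $\g\oplus\aaa$ sending $\mathcal{B}_1\oplus\mathcal{A}_1$ to multiples of $\mathcal{B}_2\oplus\mathcal{A}_2$. The difficulty is that $\varphi$ can swap elements between the $\g$-part and the $\aaa$-part, so $\pi\circ\varphi|_\g$ need not be injective.

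To handle this, let $S\subseteq\mathcal{B}_1$ be the set of elements that $\varphi$ sends into $\aaa$, and $T\subseteq\mathcal{B}_2$ the set of elements that are images (up to scalars) of $\mathcal{A}_1$. A dimension count gives $|S|=|T|$. Elements of $S$ are central and lie outside $\gamma_2$, since $\varphi(\gamma_2)=\gamma_2$ meets $\aaa$ trivially. Likewise, elements of $T$ are central in $\g$ and outside $\gamma_2$, being images of elements of $\aaa$.

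Define $\theta:\g\to\g$ to be $\pi\circ\varphi$ on $\mathcal{B}_1\setminus S$, and on $S$ any bijection onto $T$. Then $\theta$ maps $\mathcal{B}_1$ bijectively onto multiples of $\mathcal{B}_2$. It is a homomorphism because the modified elements are central and, being outside $\gamma_2$, never occur as outputs of brackets. Brackets of the remaining elements lie in $\gamma_2$, where $\theta$ agrees with the homomorphism $\pi\circ\varphi$. Hence $\theta$ is an automorphism witnessing $\mathcal{B}_1\sim\mathcal{B}_2$, which proves $\nu_{\g\oplus\aaa}=\nu_\g$.
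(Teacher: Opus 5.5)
Your proof is correct and follows the paper's argument essentially step by step. You project a subset of the nice basis containing $\mathcal{B}\cap\gamma_2$ to get a nice basis of $\g$, send the leftover central elements to a basis of $\aaa$ for surjectivity, and for injectivity correct $\pi\circ\varphi$ on central basis elements outside $[\g,\g]$, which never occur as outputs of brackets. The differences are cosmetic: you put all noncentral elements into $C$ up front where the paper deduces centrality of the leftovers afterwards, and you modify $\pi\circ\varphi$ only on $S$ (with $|S|=|T|$ by counting) where the paper modifies it on all basis elements in $Z(\g)\setminus[\g,\g]$, whose number is basis-independent by the adaptedness lemma.
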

	\begin{proof}
		Suppose $\dim(\g)=n$ and $\dim(\aaa)=m.$ Now let \[\mathcal{B}=\{(Y_1,Z_1),\dots,(Y_{n+m},Z_{n+m})\}\] be a nice basis for $\g\oplus\aaa$. It then follows from \Cref{lem. UCS/LCS-adapted} that \[[\g\oplus\aaa,\g\oplus\aaa]=[\g,\g]\oplus\{0\}\] is spanned by elements of $\mathcal{B}$. If $\dim([\g,\g])=t$, then reordering the elements if necessary gives us $Z_1=\dots=Z_t=0$ and
		\[[\g,\g]=\Span\{Y_1,\dots,Y_t\}.\]
		
		Now let $\pi:\g\oplus\aaa\rightarrow \g$ be the projection morphism onto $\g$. We can now extend $\{Y_1,\dots,Y_t\}$ to a basis of $\g$ by adding the vectors $Y_{t+1},\dots,Y_{n}$ after possibly reordering the elements in the basis $\mathcal{B}$. For all $i,j\in\{1,\dots,n\}$ it holds that
		\[[Y_i,Y_j]=[\pi(Y_i,Z_i),\pi(Y_j,Z_j)]=\pi([(Y_i,Z_i),(Y_j,Z_j)])=\pi(\alpha(Y_k,0))=\alpha Y_k\]
		for some $k\in\{1,\dots,t\}$ and $\alpha\in\R.$ Now suppose that there exist $i,j,k,s\in\{1,\dots,n\}$ such that $j\neq s$ and 
		\[[Y_i,Y_j]=\alpha Y_k\qquad [Y_i,Y_s]=\beta Y_k,\]
		for some $\alpha,\beta\in\R_0$ and $k\in\{1,\dots,t\}.$ It then follows that 
		\[[(Y_i,Z_i),(Y_j,Z_j)]=\alpha (Y_k,0)\quad\text{ and }\quad [(Y_i,Z_i),(Y_s,Z_s)]=\beta (Y_k,0),\]
		which leads to a contradiction with the assumption that $\mathcal{B}$ is a nice basis. Therefore such $i,j,s\in\{1,\dots,n\}$ can not exist, so we conclude that $\{Y_1,\dots,Y_n\}$ is a nice basis for $\g$ which we needed to prove.

		For the second part we use the same notations as before. We first show that every nice basis for $\g\oplus\aaa$ is equivalent to a basis of the form
		\[\mathcal{B}_1\oplus\mathcal{B}_2,\]
		where $\mathcal{B}_1$ is a nice basis for $\g$ and $\mathcal{B}_2$ is any basis for $\aaa.$ We already know from the first part of this proof that if 
		\[\mathcal{B}=\{(Y_1,Z_1),\dots,(Y_{n+m},Z_{n+m})\}\]
		is a nice basis for $\g\oplus\aaa$, then we can assume that $\{Y_1,\dots,Y_n\}$ is a nice basis for $\g.$ We then find from \Cref{lem. UCS/LCS-adapted} that $\{Y_1,\dots,Y_n\}$ contains exactly $\dim(Z(\g))$ central elements. This implies that
		\[\{(Y_1,Z_1),\dots,(Y_{n},Z_{n})\}\]
		contains exactly $\dim(Z(\g))$ central elements of $\g\oplus\aaa$. As $\mathcal{B}$ is a nice basis for $\g\oplus\aaa$, we find again from \Cref{lem. UCS/LCS-adapted} that $Z(\g\oplus\aaa)$ is spanned by $\dim(Z(\g))+m$ elements of $\mathcal{B}$. We thus find that $Y_{n+j}\in Z(\g)$ for all $j\in\{1,\dots,m\}.$ 
		
		Now let $\{U_1,\dots,U_m\}$ be an arbitrary basis for $\aaa$. It then holds that the linear map $\varphi:\g\oplus\aaa\rightarrow \g\oplus\aaa$ defined by
		\[\varphi((Y_i,Z_i))=(Y_i,0),\quad \varphi((Y_{n+j},Z_{n+j}))=(0,U_j)\]
		for $i\in\{1,\dots,n\}$ and $j\in\{1,\dots,m\}$ is a vector space automorphism. Moreover, note that for all $i,j\in\{1,\dots,n\}$ 
		\[[(Y_i,Z_i),(Y_j,Z_j)]\]
		is either zero or a multiple of an element of $\mathcal{B}$ of the form $(Y_k,0)$ for some $k\in\{1,\dots,n\}$ (i.e. $Z_k=0$). In both cases it is now clear that
		\[\varphi([(Y_i,Z_i),(Y_j,Z_j)])=[\varphi(Y_i,Z_i),\varphi(Y_j,Z_j)].\]
		As $\varphi$ also maps central elements to central elements, we can now conclude that $\varphi$ is a Lie algebra automorphism, sending elements of $\mathcal{B}$ to multiples of elements of a basis of the form
		\[\mathcal{B}_1\oplus\mathcal{B}_2,\]
		where $\mathcal{B}_1$ is a nice basis for $\g$ and $\mathcal{B}_2$ is any basis for $\aaa.$ Therefore $\mathcal{B}$ is equivalent to a basis of this form, which we wanted to prove.

		To end the proof, let $\{V_1,\dots,V_n\}$ and $\{W_1,\dots,W_n\}$ be nice bases for $\g$, then it suffices to show that $\{V_1,\dots,V_n\}\sim\{W_1,\dots,W_n\}$ if and only if 
		\[\mathcal{B}_1=\{(V_1,0),\dots,(V_n,0),(0,U_1),\dots,(0,U_m)\}\sim\mathcal{B}_2= \{(W_1,0),\dots,(W_n,0),(0,U_1),\dots,(0,U_m)\}.\]
		The \textit{`only if'-}part is immediately clear. Note that the converse is clear if $\g$ is abelian or $Z(\g)=0.$ For the other cases suppose that $\varphi\in\Aut(\g\oplus\aaa)$ maps elements of $\mathcal{B}_1$ to multiples of elements of $\mathcal{B}_2.$ As $\{V_1,\dots, V_n\}$ and $\{W_1,\dots,W_n\}$ are nice bases, we find by \Cref{lem. UCS/LCS-adapted} that there exists a $k\in\{0,\dots,n\}$ such that (after possibly reordering)
		\[V_1,\dots,V_k,W_1,\dots,W_k\]
		belong to $Z(\g)\setminus[\g,\g].$
		Now define the linear map $\psi:\g\rightarrow\g$ on basis elements as follows
		\[\psi(V_i)=\begin{cases}
	W_i	&\text{ if }i\leq k \\
				\pi(\varphi(V_i,0)) &\text{ if }i> k
		\end{cases}.\]
		Note that for all $i>k$ it holds that $V_i$ belongs to $[\g,\g]$ or to $\g\setminus Z(\g).$ This implies that 
		\[\varphi(V_i,0)=\alpha(W_j,0)\]
		for some $j>k$ and $\alpha\in\R_0$. We can thus conclude from this that $\psi$ is a vector space automorphism of $\g$, mapping elements of $\{V_1,\dots,V_n\}$ to multiples of elements of $\{W_1,\dots,W_n\}$. Note that if $i,j>k$ then $[V_i,V_j]=\beta V_t$ for some $\beta\in\R$ and $t\in\{k+1,\dots,n\},$ this implies that
		\[\psi([V_i,V_j])=\beta\psi(V_t)=\beta\pi(\varphi(V_t,0))=[\pi(\varphi(V_i,0)),\pi(\varphi(V_j,0))]=[\psi(V_i),\psi(V_j)].\]
		If $i$ or $j$ is an element of $\{1,\dots,k\}$, it follows immediately that $\psi([V_i,V_j])=[\psi(V_i),\psi(V_j)]$, so we can conclude that $\psi$ is a Lie algebra automorphism of $\g.$ This proves that $\{V_1,\dots,V_n\}$ and $\{W_1,\dots,W_n\}$ are equivalent, which we wanted to show.  
	\end{proof}
	From the proof above we can find the following corollary.
	\begin{corollary}
		Let $\g$ be a Lie algebra and let $\aaa$ be an abelian Lie algebra. Any nice basis $\mathcal{B}$ for the direct sum Lie algebra $\g\oplus\aaa$ is equivalent to a basis of the form
		\[\mathcal{B}_1\oplus\mathcal{B}_2,\]
		where $\mathcal{B}_1$ is a nice basis for $\g$ and $\mathcal{B}_2$ is a (nice) basis for $\aaa$. 
	\end{corollary}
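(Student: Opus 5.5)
The plan is to extract the statement directly from the second part of the proof of \Cref{thm. |g|=|g+Rn|}, making explicit the automorphism constructed there. Let $\mathcal{B}=\{(Y_1,Z_1),\dots,(Y_{n+m},Z_{n+m})\}$ be a nice basis for $\g\oplus\aaa$, with $n=\dim\g$ and $m=\dim\aaa$. By \Cref{lem. UCS/LCS-adapted}, $[\g\oplus\aaa,\g\oplus\aaa]=[\g,\g]\oplus\{0\}$ is spanned by elements of $\mathcal{B}$. So after reordering we may assume $Z_1=\dots=Z_t=0$ and that $Y_1,\dots,Y_t$ span $[\g,\g]$. We then extend by further $Y_{t+1},\dots,Y_n$ to a basis of $\g$. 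As in the first part of that proof, $\mathcal{B}_1:=\{Y_1,\dots,Y_n\}$ is a nice basis for $\g$, because brackets of elements of $\mathcal{B}$ project under $\pi$ to brackets of the $Y_i$.

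Next I show that the remaining vectors $Y_{n+1},\dots,Y_{n+m}$ lie in $Z(\g)$. Apply \Cref{lem. UCS/LCS-adapted} to the centre twice:
\begin{itemize}
\item $\mathcal{B}_1$ contains exactly $\dim Z(\g)$ central elements of $\g$;
\item $\mathcal{B}$ contains exactly $\dim Z(\g)+m$ central elements of $\g\oplus\aaa$.
\end{itemize}
An element $(Y_i,Z_i)$ is central in $\g\oplus\aaa$ if and only if $Y_i\in Z(\g)$. Hence exactly $\dim Z(\g)$ of the first $n$ elements of $\mathcal{B}$ are central. Counting then forces all $m$ remaining elements of $\mathcal{B}$ to be central, i.e.\ $Y_{n+j}\in Z(\g)$.

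Now fix any basis $\mathcal{B}_2=\{U_1,\dots,U_m\}$ of $\aaa$ (any basis of an abelian algebra is trivially nice). Define the linear map $\varphi$ by $\varphi(Y_i,Z_i)=(Y_i,0)$ for $i\le n$ and $\varphi(Y_{n+j},Z_{n+j})=(0,U_j)$. It is bijective, since it maps a basis onto the basis $\mathcal{B}_1\oplus\mathcal{B}_2$. To see that it is a homomorphism:
\begin{itemize}
\item brackets involving some $(Y_{n+j},Z_{n+j})$ vanish on both sides, because these elements are central and their images $(0,U_j)$ are central;
\item for $i,j\le n$, the bracket $[(Y_i,Z_i),(Y_j,Z_j)]=([Y_i,Y_j],0)$ is a multiple of some $(Y_k,0)$ with $k\le t$, which $\varphi$ fixes, while $[\varphi(Y_i,Z_i),\varphi(Y_j,Z_j)]=([Y_i,Y_j],0)$ as well.
\end{itemize}
So $\varphi$ is an automorphism sending $\mathcal{B}$ onto $\mathcal{B}_1\oplus\mathcal{B}_2$, giving $\mathcal{B}\sim\mathcal{B}_1\oplus\mathcal{B}_2$.

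The main obstacle is the centrality of the extra vectors $Y_{n+j}$. Without it, the map $\varphi$ would not preserve brackets of those elements with the others, and the argument would break down. The counting argument via \Cref{lem. UCS/LCS-adapted} above is what handles it.
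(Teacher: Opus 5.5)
Your proposal is correct and follows the paper's own argument, the second part of the proof of the theorem on $\g\oplus\aaa$, essentially step by step. You show that $\{Y_1,\dots,Y_n\}$ is nice for $\g$, count central basis elements via \Cref{lem. UCS/LCS-adapted} to force $Y_{n+j}\in Z(\g)$, and check that the map $\varphi$ onto $\mathcal{B}_1\oplus\mathcal{B}_2$ is an automorphism; your case-by-case bracket check only spells out what the paper states more briefly.
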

	Any Lie algebra $\g$ can be written as a direct sum of indecomposable ideals, say
	\[\g=\g_1\oplus\g_2\oplus\dots\oplus\g_s\]
	for some $s\in\N.$ Moreover it has been shown in \cite{Fisher} that this decomposition is unique up to reordering of the factors. Now suppose without loss of generality that $\g_{t+1},\dots,\g_s$ are abelian Lie algebras for some $t\in \{0,\dots,s\}$. If we want to know whether $\g$ admits a nice basis, it therefore suffices to ask whether \[\g_1\oplus\g_2\oplus\dots\oplus\g_{t}\]
	admits a nice basis.
	\begin{remark} Note that \cite[Proposition 3.3]{Conti-Rossi} states that if a central extension of a Lie algebra admits a nice basis, then the Lie algebra itself has a nice basis as well. This would make the first part of the proof of \Cref{thm. |g|=|g+Rn|} redundant. However, we were able to construct a counterexample for central extensions, therefore the statement of \cite[Proposition 3.3]{Conti-Rossi} is too general and our proof shows that it does hold for direct sums. 
\end{remark}

\begin{example} Start from the six-dimensional nilpotent Lie algebra $\n_6$ which does not admit a nice basis by \cite{Lauret-Will-6dim}. This Lie algebra has basis $\{X_1,\dots,X_6\}$ and Lie bracket
		\[[X_1,X_2]=X_4,\quad[X_1,X_4]=X_5,\quad [X_1,X_5]=[X_2,X_3]=[X_2,X_4]=X_6.\]
		Now consider the 7-dimensional nilpotent Lie algebra $\n$ with basis $\{X_1,\dots,X_7\}$ and Lie bracket 
		\[[X_1,X_2]=X_4,\quad[X_1,X_4]=X_5,\quad [X_1,X_5]=[X_2,X_3]=X_6,\quad [X_2,X_4]=X_7.\]
	The basis $\{X_1,\dots,X_7\}$ is nice. As $Z(\n)=\Span\{X_6,X_7\}$, the Lie algebra $\n$ is a central extension of $\n_6$ because  
		\[\frac{\n}{\Span\{X_6-X_7\}}\cong \n_6.\]
		Note that we can also not expect a central extension of a Lie algebra with a nice basis to have a nice basis as well. Indeed, note that $Z(\n_6)=\Span\{X_6\}$ and $\frac{\n_6}{Z(\n_6)}$ both admit a nice basis, but $\n_6$ does not admit a nice basis.
\end{example}
	We will provide a partial answer for Question \ref{question} under additional assumptions. For this we need the \textit{pre-Einstein derivation} (also called the \textit{Nikolayevsky derivation}) of a Lie algebra, which was first introduced in \cite{Nikolayevsky}. It is shown in \cite{Nikolayevsky} that every nilpotent Lie algebra admits a real semisimple derivation $N\in\Der(\g)$ such that for all $D\in\Der(\g)$ it holds that
	\[\Tr(ND)=\Tr(D).\]
	Moreover it is shown in \cite{Nikolayevsky} that $N$ is unique up to conjugation with a Lie algebra automorphism of $\g$. From this it follows immediately that the set of eigenvalues of the pre-Einstein derivation of a nilpotent Lie algebra is uniquely determined. 
	Although not explicitly stated, one can deduce from the proof of \cite[Theorem 3]{Nikolayevsky} the following result. 
	\begin{proposition}\label[proposition]{prop. NB->diag. Nik.}
		Let $\g$ be a Lie algebra which admits a nice basis, then there exists a pre-Einstein derivation of $\g$ which is diagonal with respect to this nice basis. Equivalently, if $\g$ admits a nice basis, we can find for any pre-Einstein derivation $N$ an equivalent nice basis for $\g$ such that $N$ is diagonal with respect to this basis.
	\end{proposition}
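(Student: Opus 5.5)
The plan is to take the standard pre-Einstein derivation of Nikolayevsky, which exists for nilpotent $\g$ (the setting in which it is defined above), and average it over the diagonal derivations adapted to the nice basis. Fix a nice basis $\mathcal{B}=\{X_1,\dots,X_n\}$ with structure constants $c_{ij}^k$. Let $\mathfrak d\subset\Der(\g)$ be the subspace of derivations that are diagonal with respect to $\mathcal{B}$. A diagonal map $D=\mathrm{diag}(d_1,\dots,d_n)$ is a derivation exactly when
\[d_i+d_j=d_k \quad\text{whenever } c_{ij}^k\neq 0.\]
So $\mathfrak d$ is an abelian subalgebra consisting of real semisimple derivations. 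The trace form $(A,B)\mapsto \Tr(AB)$ is positive definite on $\mathfrak d$, because it equals $\sum_i a_ib_i$. The linear functional $\Tr$ restricted to $\mathfrak d$ is therefore represented by a unique $N_0\in\mathfrak d$ with $\Tr(N_0D)=\Tr(D)$ for all $D\in\mathfrak d$. This $N_0$ is the candidate pre-Einstein derivation.

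The main step is to show that $\Tr(N_0D)=\Tr(D)$ for \emph{all} $D\in\Der(\g)$, not only for the diagonal ones. I would follow Nikolayevsky's proof. That argument shows one may test the pre-Einstein identity only on derivations commuting with a fixed maximal torus. Concretely, the pre-Einstein derivation can be chosen inside the center of a maximal reductive (Levi-type) subalgebra containing a maximal torus $\mathfrak t$. Since $\mathfrak d$ is a torus, it is contained in some maximal torus $\mathfrak t$ of $\Der(\g)$, and there is a pre-Einstein derivation $N\in\mathfrak t$ that commutes with $\mathfrak d$.

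For any $D\in\Der(\g)$, decompose $D$ into weight components for $\mathrm{ad}(\mathfrak d)$. Since $N$ and $N_0$ lie in the centralizer of $\mathfrak d$, the nonzero-weight components contribute zero to $\Tr(ND)$ and $\Tr(N_0D)$. They also contribute zero to $\Tr(D)$, because such components are traceless: they are nilpotent relative to the grading. It therefore suffices to consider $D$ in the centralizer $\mathfrak c$ of $\mathfrak d$ in $\Der(\g)$.

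This is where niceness is needed, and it is the step I expect to be the obstacle. I want to show that the semisimple parts of elements of $\mathfrak c$, viewed as diagonalizable elements, can be taken diagonal in $\mathcal{B}$. Equivalently, I want $\mathfrak d$ to be a maximal torus of $\Der(\g)$. I would argue this as follows. Niceness implies that the weights of $\mathfrak d$ on $\g$ separate the basis vectors modulo the linear relations $d_i+d_j=d_k$. The joint weight spaces of $\mathfrak d$ are spanned by subsets of $\mathcal{B}$, and any torus commuting with $\mathfrak d$ acts diagonally on those weight spaces. Two basis vectors with equal $\mathfrak d$-weight can then be scaled independently, as the conditions (1) and (2) of a nice basis allow. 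Hence the torus can be enlarged inside $\mathfrak d$, so $\mathfrak d$ is maximal. If this fails in general, I would fall back on the argument in the proof of \cite[Theorem 3]{Nikolayevsky}, which uses the Gram matrix of $\mathcal{B}$ to compute $N_0$ directly.

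Once $\mathfrak d$ is known to be a maximal torus, Nikolayevsky's construction gives a pre-Einstein derivation $N\in\mathfrak d$. The uniqueness of the trace-representing element then forces $N=N_0$, which proves the first statement.

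For the equivalent formulation, let $N'$ be any pre-Einstein derivation. By uniqueness up to automorphisms there is $\varphi\in\Aut(\g)$ with $N'=\varphi N_0\varphi^{-1}$. Then $\varphi(\mathcal{B})$ is a nice basis equivalent to $\mathcal{B}$, since the structure constants are unchanged, and $N'$ is diagonal with respect to $\varphi(\mathcal{B})$. The converse direction is immediate.
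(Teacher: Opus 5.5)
Your construction of $N_0$, the trace-representing element of the space $\mathfrak d$ of diagonal derivations, matches the paper's. The gap is in the step meant to show $\Tr(N_0D)=\Tr(D)$ for all $D\in\Der(\g)$: it rests on the claim that $\mathfrak d$ is a maximal torus of $\Der(\g)$, and that claim is false. In general, basis vectors of equal $\mathfrak d$-weight cannot be scaled independently.

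Take $\h_3\oplus\h_3$ with $[X_1,Y_1]=Z_1$ and $[X_2,Y_2]=Z_2$. Consider the nice basis
\[A=X_1+Y_2,\quad B=Y_1+X_2,\quad C=X_1-Y_2,\quad D=Y_1-X_2,\quad E=Z_1-Z_2,\quad F=Z_1+Z_2 .\]
Its brackets are $[A,B]=[C,D]=E$ and $[A,D]=-[B,C]=F$, and all other brackets vanish. The diagonal derivation conditions are
\[d_A+d_B=d_C+d_D=d_E,\qquad d_A+d_D=d_B+d_C=d_F.\]
They force $d_A=d_C$ and $d_B=d_D$, so $\dim\mathfrak d=2$. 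Yet $\mathfrak d$ lies inside the $4$-dimensional torus $\{\Dg(a,b,a+b,c,d,c+d)\}$, which is diagonal in the standard basis. Its elements mix $A$ and $C$; for instance $\Dg(1,0,1,0,0,0)$ sends $A$ to $X_1=\tfrac12(A+C)$.

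So your final deduction, ``a pre-Einstein derivation lies in $\mathfrak d$, hence equals $N_0$'', has no foundation. The reduction to the centralizer $\mathfrak c$ of $\mathfrak d$ does not finish the job on its own either, because elements of $\mathfrak c$ can act non-diagonally on the multi-dimensional weight spaces of $\mathfrak d$. The fallback to Nikolayevsky's Gram-matrix argument is too vague to close this.

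The missing idea, which the paper uses (taking it from Deré--Lauret), is that for a nice basis the diagonal part $\Dg(D)$ of any derivation $D$ is again a derivation. This is a short computation. Suppose $c_{ij}^k\neq 0$, and compare $X_k$-components in $D[X_i,X_j]=[DX_i,X_j]+[X_i,DX_j]$.
\begin{enumerate}[(1)]
\item Condition (1) makes the $X_k$-component of the left side equal to $c_{ij}^kD_{kk}$.
\item Condition (2) shows that $c_{lj}^k\neq 0$ forces $l=i$, and $c_{il}^k\neq 0$ forces $l=j$.
\item What remains is $c_{ij}^k(D_{ii}+D_{jj})=c_{ij}^kD_{kk}$, which is exactly your own criterion for a diagonal derivation.
\end{enumerate}
Since $N_0$ is diagonal,
\[\Tr(N_0D)=\Tr(N_0\Dg(D))=\Tr(\Dg(D))=\Tr(D),\]
with no maximal tori or weight decompositions needed. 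Your treatment of the equivalent reformulation, by conjugating with an automorphism, is fine.
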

	 We will provide a simple argument for this result. For this we will need the fact that if $D$ is a derivation of a Lie algebra $\g$ equipped with a nice basis, then the diagonal of $D$ in this basis is again a derivation of $\g.$ This was first proven in \cite[Corollary 3.15]{Deré-Lauret} by a direct computation.
	\begin{proof}
		Note that both statements are equivalent since the pre-Einstein derivation is unique up to conjugation by a Lie algebra automorphism. 

		Suppose that $\mathcal{B}=\{X_1,\dots,X_n\}$ and define $\Dg(\n)$ to be the set of linear operators on $\n$ which are diagonal with respect to $\mathcal{B}$. The bilinear map 
		\[\left(\Der(\n)\cap\Dg(\n)\right)\times \left( \Der(\n)\cap\Dg(\n)\right)\rightarrow \R: (D_1,D_2)\mapsto \Tr(D_1D_2)\]
		defines an inner product on $\Der(\n)\cap\Dg(\n)$. Hence there exists a unique $N\in \Der(\n)\cap\Dg(\n)$ such that $\Tr(ND')=\Tr(D' )$ for all $D' \in \Der(\n)\cap\Dg(\n)$.
		
		We now claim that $N$ is a pre-Einstein derivation for $\n$. For this, pick any $D\in\Der(\n)$ and consider its diagonal $\Dg(D)$ with respect to the basis $\{X_1, \ldots, X_n\}$. As $\mathcal{B}$ is a nice basis, $\Dg(D)$ is again a derivation of $\n$ which we denote by $D'$. We thus have that $D'\in\Der(\n)\cap\Dg(\n)$. The following equalities now hold
		\[\Tr(ND)=\Tr(ND')=\Tr(D')=\Tr(D).\]
		This concludes the proof.
	\end{proof}
	As an example, we will use the method of the above proof to calculate the pre-Einstein derivation for standard filiform Lie algebras.
		\begin{example}\label[example]{ex. Nik-Der-L_n}
		Pick $n\in\N_{\geq 3}$ and let $L_n$ denote the $n$-dimensional standard filiform Lie algebra. This is the Lie algebra with basis $\{e_1,\dots,e_n\}$ and Lie bracket
		\[[e_1,e_i]=e_{i+1},\quad\forall i\in\{2,\dots,n-1\}.\]
		As this basis is nice it follows from the proof of \Cref{prop. NB->diag. Nik.} that it suffices to find a derivation $N$, diagonal with respect to this basis, such that for all other diagonal derivations $D'$ it holds that
		\begin{equation}\label{eq. pre-Einst-eqn}
		\Tr(ND')=\Tr(D').
		\end{equation}
		Note that any diagonal derivation with respect to this basis is of the form
		\[\Dg(x,y,x+y,2x+y,\dots,(n-2)x+y),\]
		for $x,y\in\R.$ If we let
		\[N^{(n)}=\Dg(d_1^{(n)},d_2^{(n)},d_1^{(n)}+d_2^{(n)},2d_1^{(n)}+d_2^{(n)},\dots,(n-2)d_1^{(n)}+d_2^{(n)})\]
		be a pre-Einstein derivation of $L_n,$ we find from condition \eqref{eq. pre-Einst-eqn} that
		\[xd_1^{(n)}+yd_2^{(n)}+\sum_{k=1}^{n-2}(kx+y)(kd_1^{(n)}+d_2^{(n)})=x+y+\sum_{k=1}^{n-2}(kx+y),\]
		for all $x,y\in\R.$ By equating the coefficients of $x$ and $y$ and using the sum formulas for the first $n-2$ natural numbers and squares, we get that
		\[d_1^{(n)}=\frac{12}{n^3-3n^2+2n+12},\qquad d_2^{(n)}=\frac{n^3-3n^2-4n+24}{n^3-3n^2+2n+12}.\]
	\end{example}
Using \Cref{prop. NB->diag. Nik.}, we can give another proof for the fact that $\n_6$ does not admit a nice basis. Note that in contrast to the proof of \cite[Proposition 2.1]{Lauret-Will-6dim}, our argument does not use the classification of $6$-dimensional nilpotent Lie algebras. The proof uses the same idea as \cite[Proposition 1.1]{Conti-Rossi-Ad-invariant}.
	\begin{proposition}
		The Lie algebra $\n_6$ with basis $\{X_1,\dots,X_6\}$ and Lie bracket 
		$$[X_1,X_2]=X_4,\quad [X_1,X_4]=X_5,\quad [X_1,X_5]=[X_2,X_3]=[X_2,X_4]=X_6$$
		does not admit a nice basis.
	\end{proposition}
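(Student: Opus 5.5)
The plan is to combine \Cref{prop. NB->diag. Nik.} with an explicit computation of the pre-Einstein derivation of $\n_6$. If $\n_6$ had a nice basis, we could replace it by an equivalent nice basis in which a fixed pre-Einstein derivation $N$ is diagonal. That basis would then consist of eigenvectors of $N$. We will then show that no basis of $N$-eigenvectors satisfies the two nice conditions.

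\textbf{Step 1: compute $N$.} First find the diagonal derivations in the basis $\{X_1,\dots,X_6\}$. Write $D=\Dg(a_1,\dots,a_6)$. The relations give
\[a_4=a_1+a_2,\qquad a_5=2a_1+a_2,\qquad a_6=3a_1+a_2=a_2+a_3=a_1+2a_2.\]
Hence $a_2=2a_1$, $a_3=3a_1$, and $D=s\,\Dg(1,2,3,3,4,5)$. This defines a positive grading $\n_6=\bigoplus_w \n_6^{(w)}$ with weights $w(X_i)=(1,2,3,3,4,5)$. Any derivation splits into homogeneous components of degree $d$, and only the degree-$0$ component contributes to $\Tr(D)$ and to $\Tr(ND)$ when $N$ is a multiple of the grading derivation.

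So it suffices to compute the degree-$0$ derivations. These preserve $\Span\{X_1\}$, $\Span\{X_2\}$, $\Span\{X_3,X_4\}$, $\Span\{X_5\}$ and $\Span\{X_6\}$. I expect a short computation to show that their diagonal entries are always of the form $s(1,2,3,3,4,5)$, plus possibly a trace-zero part that mixes $X_3$ and $X_4$ (the block $\Span\{X_3,X_4\}$ must be checked carefully). Granting this, $N=t\,\Dg(1,2,3,3,4,5)$ satisfies $\Tr(ND)=\Tr(D)$ for all $D\in\Der(\n_6)$ once
\[t^2(1+4+9+9+16+25)=t(1+2+3+3+4+5),\]
that is, $t=5/16$. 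This $N$ is semisimple and real, so it is a pre-Einstein derivation. Its eigenspaces are $\Span\{X_1\}$, $\Span\{X_2\}$, $\Span\{X_3,X_4\}$, $\Span\{X_5\}$ and $\Span\{X_6\}$. This step is the main obstacle, since one must really verify that no degree-$0$ derivation breaks the trace identity.

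\textbf{Step 2: reduce the candidate nice bases.} By \Cref{prop. NB->diag. Nik.}, a nice basis can be taken to be $N$-diagonal. Up to scaling it then has the form
\[\{X_1,\ X_2,\ Y=aX_3+bX_4,\ Y'=cX_3+dX_4,\ X_5,\ X_6\},\qquad ad-bc\neq 0.\]
The relevant brackets are
\[[X_1,Y]=bX_5,\quad [X_1,Y']=dX_5,\quad [X_2,Y]=(a+b)X_6,\quad [X_2,Y']=(c+d)X_6,\quad [X_1,X_5]=X_6.\]

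\textbf{Step 3: derive a contradiction.} Condition (2) of niceness forces two things. Since the pairs $\{X_1,Y\}$ and $\{X_1,Y'\}$ share $X_1$, at most one of $b,d$ is nonzero. Since $\{X_2,Y\}$ and $\{X_2,Y'\}$ share $X_2$, at most one of $a+b$ and $c+d$ is nonzero.

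By invertibility exactly one of $b,d$ is zero; by symmetry say $d=0$. Then $c\neq 0$, so $c+d\neq 0$, which forces $a+b=0$ with $b\neq0$. Thus $Y=b(X_4-X_3)$ and $Y'=cX_3$. Now
\[[X_1,X_2]=X_4=\tfrac1b Y+\tfrac1c Y'\]
has two nonzero coordinates in this basis, which violates condition (1). The case $b=0$ is symmetric. Hence $\n_6$ admits no nice basis.
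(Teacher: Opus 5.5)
Your proposal is correct and is essentially the paper's proof. You diagonalize the pre-Einstein derivation using the diagonalization proposition, reduce to bases of $N$-eigenvectors, and play conditions (1) and (2) against the brackets landing in $X_4$, $X_5$, $X_6$; you only run the case analysis in a different order, and your grading argument restricting to degree-$0$ derivations is a tidy way to justify $N$, which the paper gets from the full derivation algebra.

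Two small repairs are needed. First, the trace identity reads $64t^2=18t$, so $t=\frac{9}{32}$ rather than $\frac{5}{16}$; this is harmless, since only the eigenspaces are used. Second, the degree-$0$ check you deferred is immediate: $[X_1,X_3]=0$ forbids an $X_3\mapsto X_4$ component and $[X_1,X_2]=X_4$ forbids an $X_4\mapsto X_3$ component, so every degree-$0$ derivation is exactly $s\,\Dg(1,2,3,3,4,5)$.
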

	\begin{proof}
		The pre-Einstein derivation of $\n_6$ with respect to the given basis is of the form
		\[N=\frac{9}{32}\Dg(1,2,3,3,4,5).\]
		We obtained this expression by explicitly calculating the derivation algebra of $\n_6$. Note that it is not immediate that this derivation is diagonal with respect to the given basis (as it is not a nice basis).
	
		Suppose by contradiction that $\n_6$ does admit a nice basis, then we find by the previous proposition a nice basis $\mathcal{B}=\{X'_1,\dots,X'_6\}$ with respect to which $N$ is diagonal. We thus find without loss of generality that \[X_1'\in\Span\{X_1\},\quad X_2'\in\Span\{X_2\},\quad X_5'\in\Span\{X_5\},\quad X_6'\in\Span\{X_6\}.\] Since $\mathcal{B}$ is a nice basis it should hold that $[X_1',X_2']$ is a multiple of a nice basis element, say $X_4'$. On the other hand we know that $[X_1',X_2']$ must be a nonzero element of $\Span\{X_4\}$. We thus find that $X_4'$ is a nonzero element of $\Span\{X_4\}$. From this it follows that $X_3'=\alpha X_3+\beta X_4$, with $\alpha, \beta \in \R$ and $\alpha\neq 0.$ 
		
		From the discussion above we now find that there exists a $\gamma,\delta\in\R_0$ such that $X_2'=\gamma X_2$ and  $X_4'=\delta X_4$, so 
		\[[X_2',X_3']=\alpha\gamma[X_2,X_3]+\beta\gamma[X_2,X_4]=\gamma(\alpha+\beta)X_6\in\Span\{X_6'\}\]
		\[[X_2',X_4']=\gamma\delta X_6 \in\Span\{X_6'\}\setminus\{0\}.\]
		The above is only possible if $\alpha=-\beta,$ so $X_3'=\alpha(X_3-X_4)$. We now find from this that
		\[[X_1',X_3']\in\Span\{X_5'\}\setminus\{0\},\]
		however we also have that $[X_1',X_4']\in\Span\{X_5'\}\setminus\{0\}$ which contradicts the definition of a nice basis. We conclude that $\n_6$ does not admit a nice basis.
	\end{proof}
	By using a similar idea as in the proof of the above proposition, we can prove the following.
	\begin{theorem}\label[proposition]{prop. nik-der-for-1-ideal}
		Let $\g$ and $\h$ be Lie algebras with respective pre-Einstein derivations $N_1$ and $N_2$. Now suppose that $\Spec(N_1)\cap\Spec(N_2)=\emptyset$. It then holds that if $\g\oplus\h$ admits a nice basis so do $\g$ and $\h$. Moreover
		\[\nu_{\g\oplus\h}=\nu_\g\cdot \nu_\h.\]
	\end{theorem}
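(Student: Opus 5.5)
The plan is to use \Cref{prop. NB->diag. Nik.} together with the observation that $N_1\oplus N_2$ is a pre-Einstein derivation of $\g\oplus\h$. I assume, as the paper implicitly does, that pre-Einstein derivations exist for the Lie algebras considered. First I check the observation. Any $D\in\Der(\g\oplus\h)$ has block form with respect to $\g\oplus\h$. Its diagonal blocks $D_{11}$ and $D_{22}$ are derivations of $\g$ and $\h$: for $D_{11}$, apply $D$ to $[X,Y]$ with $X,Y\in\g$ and project onto $\g$. Hence
\[\Tr((N_1\oplus N_2)D)=\Tr(N_1D_{11})+\Tr(N_2D_{22})=\Tr(D_{11})+\Tr(D_{22})=\Tr(D).\]
Semisimplicity is clear, so $N:=N_1\oplus N_2$ is pre-Einstein. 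Note that $\g$ is exactly the sum of the eigenspaces of $N$ for eigenvalues in $\Spec(N_1)$, and $\h$ is the sum of those for $\Spec(N_2)$. This uses $\Spec(N_1)\cap\Spec(N_2)=\emptyset$.

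For existence, let $\mathcal{B}$ be a nice basis of $\g\oplus\h$. By \Cref{prop. NB->diag. Nik.}, after replacing $\mathcal{B}$ by an equivalent nice basis, $N$ is diagonal with respect to $\mathcal{B}$. Every element of $\mathcal{B}$ is then an eigenvector of $N$, so it lies in $\g$ or in $\h$ by the disjointness of spectra. Thus $\mathcal{B}=\mathcal{B}_1\sqcup\mathcal{B}_2$, with $\mathcal{B}_1$ a basis of $\g$ and $\mathcal{B}_2$ a basis of $\h$. Brackets within $\mathcal{B}_1$ are the brackets of $\g$, so the structure constants of $\mathcal{B}_1$ are a subset of those of $\mathcal{B}$. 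The nice conditions are inherited, so $\mathcal{B}_1$ and $\mathcal{B}_2$ are nice. The same argument shows that every nice basis of $\g\oplus\h$ is equivalent to one of the form $\mathcal{B}_1\oplus\mathcal{B}_2$.

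For the count, I consider the map $([\mathcal{B}_1],[\mathcal{B}_2])\mapsto[\mathcal{B}_1\oplus\mathcal{B}_2]$. It is well defined, since $\varphi_1\oplus\varphi_2$ is an automorphism, and it is surjective by the previous paragraph. The main obstacle is injectivity. Suppose $\varphi\in\Aut(\g\oplus\h)$ sends $\mathcal{B}_1\oplus\mathcal{B}_2$ to multiples of $\mathcal{B}_1'\oplus\mathcal{B}_2'$. I would first normalize so that $N$ is diagonal with respect to all four bases. This is done by applying \Cref{prop. NB->diag. Nik.} on each factor with $N_1$ and $N_2$, replacing $\mathcal{B}_i,\mathcal{B}_i'$ by equivalent nice bases; this preserves both classes in $\g,\h$ and in $\g\oplus\h$. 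Then $\varphi N\varphi^{-1}$ is again diagonal with respect to $\mathcal{B}'$, since $\varphi$ permutes basis lines. Both $N$ and $\varphi N\varphi^{-1}$ are pre-Einstein derivations diagonal in the nice basis $\mathcal{B}'$. By the uniqueness in the proof of \Cref{prop. NB->diag. Nik.}, there is a unique pre-Einstein element of $\Der\cap\Dg$, since the trace form is an inner product there. Hence $\varphi N\varphi^{-1}=N$, so $\varphi$ preserves the eigenspaces of $N$, and therefore preserves $\g$ and $\h$. Then $\varphi=\varphi_1\oplus\varphi_2$ with $\varphi_i$ automorphisms of the factors sending $\mathcal{B}_i$ to multiples of $\mathcal{B}_i'$. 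This gives injectivity and hence $\nu_{\g\oplus\h}=\nu_\g\cdot\nu_\h$.
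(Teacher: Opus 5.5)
Your proof is correct and follows essentially the same route as the paper: $N_1\oplus N_2$ is pre-Einstein, a nice basis can be taken $N$-diagonal so that disjointness of the spectra splits it into nice bases of $\g$ and $\h$, and an equivalence between split bases must preserve $\g$ and $\h$. Your injectivity step is more careful than the paper's, which only asserts that $\psi(X)$ is an eigenvector with eigenvalue in $\Spec(N_1)$ and is therefore a multiple of an element of $\mathcal{B}_1'$. Normalizing all four bases and using uniqueness of the pre-Einstein derivation that is diagonal in a fixed basis gives $\varphi N\varphi^{-1}=N$, which is exactly what makes that step rigorous.
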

	\begin{proof}
		Let $n:=\dim(\g)$ and $m:=\dim(\h).$
		Note that \[N = N_1\oplus N_2\] is a pre-Einstein derivation of $\g\oplus\h$ by \cite[Theorem 7]{Nikolayevsky}. As $\g\oplus\h$ admits a nice basis $\mathcal{B}=\{X_1,\dots,X_{n+m}\}$, we can assume that $N$ is diagonal with respect to $\mathcal{B}$. By reordering the elements, we get that $\{X_1,\dots,X_n\}$ is a set of eigenvectors of $N$ with eigenvalues in $\Spec(N_1)$ and $\{X_{n+1},\dots,X_{n+m}\}$ is a set of eigenvectors of $N$ with eigenvalues in $\Spec(N_2)$. Since $\Spec(N_1)\cap\Spec(N_2)=\emptyset$, it follows that 
	\[\{\varphi(X_1),\dots,\varphi(X_n)\},\qquad \{\varphi(X_{n+1}),\dots,\varphi(X_{n+m})\}\]
	are nice bases for $\g$ and $\h$ respectively.

	By the previous discussion we know that any nice basis for $\g\oplus\h$ is of the form $\mathcal{B}_1\oplus\mathcal{B}_2$, where $\mathcal{B}_1$ and $\mathcal{B}_2$ are nice bases for $\g$ and $\h$ respectively. To prove the second part it therefore suffices to show that \[\mathcal{B}_1\oplus\mathcal{B}_2\sim\mathcal{B}_1'\oplus\mathcal{B}_2'\iff\mathcal{B}_1\sim\mathcal{B}_1'\text{ and }\mathcal{B}_2\sim\mathcal{B}_2'.\]
	 Note that the \textit{`if'-}part is trivial. For the converse, suppose that $\mathcal{B}_1\oplus\mathcal{B}_2\sim\mathcal{B}_1'\oplus\mathcal{B}_2'$. We can then find a map $\psi\in\Aut(\g\oplus\h)$ such that $\psi$ maps elements from $\mathcal{B}_1\oplus\mathcal{B}_2$ to multiples of elements of $\mathcal{B}_1'\oplus\mathcal{B}_2'$. If $X\in\mathcal{B}_1$ it holds that $\psi(X)$ can not be a multiple of an element of $\mathcal{B}_2' $. Indeed, $\psi(X)$ is an eigenvector of a pre-Einstein derivation with corresponding eigenvalue in $\Spec(N_1)$ and as  $\Spec(N_1)\cap\Spec(N_2)=\emptyset$ we conclude that $\psi(X)$ must be a multiple of an element in $\mathcal{B}_1' $. We thus have that
	 \[\psi=\psi_1\oplus\psi_2,\]
	 where $\psi_1\in\Aut(\g)$ and $\psi_2\in\Aut(\h)$. This proves the equivalence. 
	\end{proof}
	\begin{corollary}\label[corollary]{cor. directsum-Nik-Der}
		Let $\g_1, \g_2,\dots,\g_n$ be Lie algebras with respective pre-Einstein derivations $N_1, N_2, \dots, N_n$ such that the spectra of these derivations are pairwise disjoint. It then holds that if $\g:=\bigoplus_{i=1}^n\g_i$ admits a nice basis, so do $\g_1, \g_2, \dots, \g_n$. Moreover \[\nu_{\g}=\prod_{i=1}^n\nu_{\g_i}.\]
	\end{corollary}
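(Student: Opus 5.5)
The plan is induction on $n$, using \Cref{prop. nik-der-for-1-ideal} as the two-summand step. For $n=1$ there is nothing to prove, and the case $n=2$ is exactly \Cref{prop. nik-der-for-1-ideal}. For $n\geq 3$, I would write
\[\g=\Big(\bigoplus_{i=1}^{n-1}\g_i\Big)\oplus\g_n=:\h\oplus\g_n\]
and apply \Cref{prop. nik-der-for-1-ideal} to the pair $(\h,\g_n)$.

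The key step is to identify a pre-Einstein derivation of $\h$ and its spectrum. By \cite[Theorem 7]{Nikolayevsky}, which was already used in the proof of \Cref{prop. nik-der-for-1-ideal}, the direct sum $N_1\oplus\dots\oplus N_{n-1}$ is a pre-Einstein derivation of $\h$. Its spectrum is $\bigcup_{i=1}^{n-1}\Spec(N_i)$. By pairwise disjointness this union is disjoint from $\Spec(N_n)$. Since the spectrum of a pre-Einstein derivation is independent of the choice (they are all conjugate by automorphisms), the hypothesis of \Cref{prop. nik-der-for-1-ideal} holds for $\h$ and $\g_n$.

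Now suppose $\g$ admits a nice basis. Then \Cref{prop. nik-der-for-1-ideal} shows that $\h$ and $\g_n$ both admit nice bases, and that $\nu_\g=\nu_\h\cdot\nu_{\g_n}$. The summands $\g_1,\dots,\g_{n-1}$ still have pairwise disjoint spectra, so the induction hypothesis applied to $\h$ gives that each $\g_i$ with $i\leq n-1$ admits a nice basis and that $\nu_\h=\prod_{i=1}^{n-1}\nu_{\g_i}$. Combining the two equalities gives the product formula.

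The only point needing care is the claim that the direct sum of pre-Einstein derivations is pre-Einstein. This requires the summands to be nilpotent, which is the setting in which the pre-Einstein derivation was defined. Also, it should be applied to the iterated sum, which is immediate from the two-summand case by induction. Everything else is bookkeeping, so I do not expect a real obstacle.
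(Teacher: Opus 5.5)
Your induction, which splits off one summand and applies \Cref{prop. nik-der-for-1-ideal} to $\bigl(\bigoplus_{i<n}\g_i\bigr)\oplus\g_n$, is exactly how the corollary follows from that theorem in the paper (the corollary is stated there without a separate proof), so the proposal is correct. The nilpotency caveat you raise is unnecessary: for any derivation $\psi$ of $\bigoplus_i\g_i$, each diagonal block $\psi_{ii}$ is a derivation of $\g_i$ because $[\g_i,\g_j]=0$ for $i\neq j$, so $\Tr\bigl((\bigoplus_i N_i)\psi\bigr)=\sum_i\Tr(N_i\psi_{ii})=\sum_i\Tr(\psi_{ii})=\Tr(\psi)$, and hence $\bigoplus_i N_i$ is pre-Einstein for arbitrary Lie algebras, which is the generality in which the corollary is stated.
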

Note that we can not expect the formula $\nu_{\g\oplus\h}=\nu_\g\cdot\nu_\h$ to hold in general since it is shown in \cite[Theorem 2]{Gorbatsevich} that $\nu_{\h_3\oplus\h_3}=2$ but $\nu_{\h_3}=1.$

As mentioned in \cite{Lauret-Will-6dim}, the existence of a simple derivation (i.e. a real semisimple derivation with pairwise distinct eigenvalues) is a sufficient condition for a Lie algebra to admit a nice basis. By combining this with a similar argument as in the proof of \Cref{prop. nik-der-for-1-ideal}, we find the following corollary.
\begin{corollary}
	If the pre-Einstein derivation of a Lie algebra $\g$ is simple, then $\g$ has a unique nice basis up to equivalence. 
\end{corollary}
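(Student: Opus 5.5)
Existence comes from the simple derivation, and uniqueness from the fact that the eigenvectors of a simple diagonalizable operator are determined up to scalars.

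\emph{Existence.} Let $N$ be a pre-Einstein derivation of $\g$. It is real semisimple, and by assumption its eigenvalues $\lambda_1,\dots,\lambda_n$ are pairwise distinct. Take an eigenbasis $\{X_1,\dots,X_n\}$ with $N X_i=\lambda_i X_i$. Since $N$ is a derivation,
\[N[X_i,X_j]=(\lambda_i+\lambda_j)[X_i,X_j].\]
So $[X_i,X_j]$ lies in the eigenspace for $\lambda_i+\lambda_j$. That eigenspace is either zero or one-dimensional, spanned by some $X_k$, which gives condition (1). For condition (2), suppose $c_{ij}^k\neq 0$ and $c_{st}^k\neq 0$ with $\{i,j\}\cap\{s,t\}\neq\emptyset$, say $i=s$. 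Then $\lambda_i+\lambda_j=\lambda_k=\lambda_i+\lambda_t$, so $\lambda_j=\lambda_t$ and hence $j=t$. Thus the eigenbasis is nice. This is the observation from \cite{Lauret-Will-6dim} recalled just before the statement.

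\emph{Uniqueness.} Let $\mathcal{B}$ and $\mathcal{B}'$ be two nice bases.
\begin{enumerate}[(i)]
\item By \Cref{prop. NB->diag. Nik.}, $\mathcal{B}$ is equivalent to a nice basis with respect to which $N$ itself is diagonal; replace $\mathcal{B}$ by it. Here we use the second formulation of that proposition, for an arbitrary fixed pre-Einstein derivation.
\item Do the same for $\mathcal{B}'$, with the same $N$.
\item Both bases now consist of eigenvectors of $N$. Since $N$ is simple, each eigenspace is one-dimensional. Hence every element of $\mathcal{B}$ is a nonzero multiple of a unique element of $\mathcal{B}'$.
\item The identity map is then a Lie algebra automorphism sending elements of $\mathcal{B}$ to multiples of elements of $\mathcal{B}'$, so $\mathcal{B}\sim\mathcal{B}'$. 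Transitivity of $\sim$ gives $\nu_\g=1$.
\end{enumerate}

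\emph{Main obstacle.} The delicate point is the scope of the hypotheses. The pre-Einstein derivation is guaranteed in \cite{Nikolayevsky} only for nilpotent $\g$, and is unique only up to automorphisms. \Cref{prop. NB->diag. Nik.}, however, is stated for any Lie algebra admitting a nice basis. Its proof only used that $\Tr(D_1D_2)$ is an inner product on the diagonal derivations and that diagonals of derivations are derivations. I will phrase the corollary as applying whenever a pre-Einstein derivation exists and is unique up to conjugation, which is what the ``equivalently'' in \Cref{prop. NB->diag. Nik.} needs. I would also remark that simplicity does not depend on the choice of $N$, since all pre-Einstein derivations are conjugate.
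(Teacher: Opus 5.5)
Your proof is correct and follows essentially the paper's argument. Existence comes from the Lauret--Will observation that an eigenbasis of a simple derivation is nice, which you verify directly. Uniqueness comes from using \Cref{prop. NB->diag. Nik.} to replace any two nice bases by equivalent ones diagonalizing the same pre-Einstein derivation $N$; the one-dimensional eigenspaces of $N$ then force these to coincide up to scaling, as in the spectral argument of \Cref{prop. nik-der-for-1-ideal}.
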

	We will now apply \Cref{prop. nik-der-for-1-ideal} in the following proposition.
	\begin{proposition}\label[proposition]{prop. n_6+L_n_no_nice_basis}
			Pick $n\in\N_{0}$ arbitrary and choose $i_1,\dots,i_n\in\N_{\geq 3}$. It holds that the Lie algebra \[\g:=\n_6\oplus\bigoplus_{k=1}^nL_{i_k}\] does not admit a nice basis.
		\end{proposition}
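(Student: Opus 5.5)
The plan is to reduce to \Cref{prop. nik-der-for-1-ideal} with the two summands $\n_6$ and $\h:=\bigoplus_{k=1}^nL_{i_k}$. Suppose $\g$ admits a nice basis. If the pre-Einstein derivations of $\n_6$ and $\h$ have disjoint spectra, that result says $\n_6$ also admits a nice basis. This contradicts the previous proposition. So everything comes down to comparing spectra.

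\emph{Step 1 (the spectra).} By the previous proof, the pre-Einstein derivation of $\n_6$ has spectrum $\{\tfrac{9}{32}j : j=1,\dots,5\}$. By \cite[Theorem 7]{Nikolayevsky} (already used in the proof of \Cref{prop. nik-der-for-1-ideal}), the direct sum of the pre-Einstein derivations of the $L_{i_k}$ is a pre-Einstein derivation of $\h$. Hence $\Spec$ of the pre-Einstein derivation of $\h$ is the union of the spectra of the $N^{(i_k)}$, and it suffices to show that for every $m\geq 3$ the spectrum of $N^{(m)}$ avoids $\tfrac{9}{32}\{1,\dots,5\}$.

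\emph{Step 2 (explicit spectrum of $L_m$).} Use \Cref{ex. Nik-Der-L_n} and write $D=m^3-3m^2+2m+12=m(m-1)(m-2)+12$. The eigenvalues are $d_1^{(m)}=12/D$ and, for $k=0,\dots,m-2$,
\[
kd_1^{(m)}+d_2^{(m)}=\frac{D+12k-6m+12}{D},
\]
since the numerator of $d_2^{(m)}$ equals $D-6m+12$. Because $m(m-1)(m-2)$ is divisible by $6$, we can write $D=6s$ with $s=\binom{m}{3}+2$.

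\emph{Step 3 (arithmetic, the main obstacle).} Suppose first that $12/D=\tfrac{9j}{32}$. Then $9jD=384=3\cdot 2^7$, which is impossible since $9\nmid 384$. Suppose instead that $\tfrac{D+12k-6m+12}{D}=\tfrac{9j}{32}$. Dividing by $6$ gives
\[
32(s+2k-m+2)=9js .
\]
The plan is to extract a contradiction by valuations. First, $9\mid s+a$ with $a:=2k-m+2$. Second, since $j\le 5$, $2$-adically $v_2(s)\geq 5-v_2(j)\geq 3$. Third, the left side forces $s+a=\tfrac{9j}{32}s$. Since $0<s+a$ and $j\le 5$, this pins down $a$ in terms of $s$ as $a=(\tfrac{9j}{32}-1)s$. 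The size of $|a|\le m$ is then compared with $s\approx m^3/6$. I expect this to kill all large $m$ immediately: $|\tfrac{9j}{32}-1|s\ge \tfrac{5}{32}s$ exceeds $m$ once $m$ is moderately large, using $j\neq 32/9$. The finitely many small $m$ are then checked by hand against the divisibility constraints ($8\mid s$, $9\mid s+a$). This case analysis is where I expect the real work. If the bound approach turns out to be clumsy, the fallback is a direct $3$-adic comparison of $D$ and $D+12k-6m+12$.

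Once Step 3 gives disjointness, \Cref{prop. nik-der-for-1-ideal} concludes the proof.
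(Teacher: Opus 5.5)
Your proposal is correct. The reduction is the paper's: \Cref{prop. nik-der-for-1-ideal} together with \cite[Theorem 7]{Nikolayevsky} reduces the claim to showing $\Spec(N^{(m)})\cap\tfrac{9}{32}\{1,\dots,5\}=\emptyset$ for every $m\geq 3$. You differ only in how you verify this disjointness. Your rewriting of the eigenvalues as $12/D$ and $(D+12k-6m+12)/D$, with $D=6s$ and $s=\binom{m}{3}+2$, is correct.

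There is one slip in Step 3. The minimum of $|\tfrac{9j}{32}-1|$ over $1\le j\le 5$ is $\tfrac{4}{32}=\tfrac18$, attained at $j=4$, not $\tfrac{5}{32}$. With the correct constant, $|a|\le m-2$ forces $s\le 8(m-2)$, which leaves $m\le 7$. Your constant would have stopped at $m\le 6$ and missed $m=7$. The remaining cases are immediate: for $m=3,\dots,7$ one gets $s=3,6,12,22,37$, and none of these is divisible by $8$. So the case analysis you expected to be the real work is a one-line check, and the condition $9\mid s+a$ is never needed.

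Since $kd_1^{(m)}+d_2^{(m)}=1+a/s$, your size bound is the paper's interval argument in another form. The inequality $s>8(m-2)$ is exactly the paper's condition $(m-2)d_1^{(m)}+d_2^{(m)}<\tfrac98$, which holds for $m\ge 8$. It traps all these eigenvalues in $(\tfrac{27}{32},\tfrac98)$, strictly between consecutive eigenvalues of $N_1$.

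The genuine difference lies elsewhere. The paper handles $d_1^{(m)}$ with the estimate $d_1^{(m)}<\tfrac{9}{32}$ for $m\ge 5$, and disposes of every small case by explicit checks it does not display, supported by a plot. Your divisibility arguments replace all of that with a uniform, hand-checkable proof:
\begin{enumerate}[(1)]
\item $9\nmid 384$ excludes $d_1^{(m)}$ for all $m$ at once;
\item $8\nmid s$ excludes the other eigenvalues for $m\le 7$.
\end{enumerate}
The price is that your argument depends on the specific denominators $32$ and $9$ in $\Spec(N_1)$ and on integrality. The paper's interval picture would transfer more readily if $\n_6$ were replaced by another summand.
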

		\begin{proof}
		The pre-Einstein derivation $N_1$ of $\n_6$ with respect to the usual basis is of the form
		\[\frac{9}{32}\Dg(1,2,3,3,4,5).\] Denote by $N_2$ the pre-Einstein derivation of \[\bigoplus_{k=1}^nL_{i_k}.\] We will show that $\Spec(N_1)\cap\Spec(N_2)=\emptyset$, for this it suffices to show that $\Spec(N_1)\cap \Spec(N^{(n)})=\emptyset$ for all $n\geq 3$. We provided plots for the eigenvalues of the pre-Einstein derivation of $L_n$ as a function of $n$ in the figure below.
		
		Note that for all $n\geq 5$, $d_1^{(n)}<9/32$ which is the smallest element of $\Spec(N_1)$. It can be checked explicitly that for $n=3,4$ it also holds that $d_1^{(n)}\notin\Spec(N_1).$ On the other hand note that for all $n\geq 7$ it holds that 
		\[3\cdot\frac{9}{32}=\frac{27}{32}<d_2^{(n)}<\frac{9}{8}=4\cdot\frac{9}{32}.\]
		Note that $27/32$ and $9/8$ are consecutive eigenvalues of $N_1$ and moreover it can be checked explicitly that for all $n=3,4,5,6$ we have that $d_2^{(n)}\notin\Spec(N_1)$.
		
		Now note that for all $n\in\N_{\geq 3}$
		\[d_1^{(n)},d_2^{(n)}> 0.\]
		From this it follows that $d_1^{(n)}+d_2^{(n)}<kd_1^{(n)}+d_2^{(n)}<(n-2)d_1^{(n)}+d_2^{(n)}$ for all $k\in\{2,\dots,n-3\}$. Now note that for all $n\in\N_0$ it holds that $d_1^{(n)}+d_2^{(n)}>27/32$ and for all $n\geq 8$ it holds that $(n-2)d_1^{(n)}+d_2^{(n)}<9/8$. We thus find that for all $n\geq 8$ and $k\in \{2,\dots,n-3\}$ it holds that 
		\[3\cdot\frac{9}{32}=\frac{27}{32}<d_1^{(n)}+d_2^{(n)}<kd_1^{(n)}+d_2^{(n)}<(n-2)d_1^{(n)}+d_2^{(n)}<\frac{9}{8}=4\cdot\frac{9}{32}.\]
		We can conclude from this that for all $n\in \N_{\geq 8}$ we have that $\Spec(N_1)\cap\Spec\left(N^{(n)}\right)=\emptyset.$
		It can be checked explicitly that the same holds for $n=3,4,5,6,7.$ 
		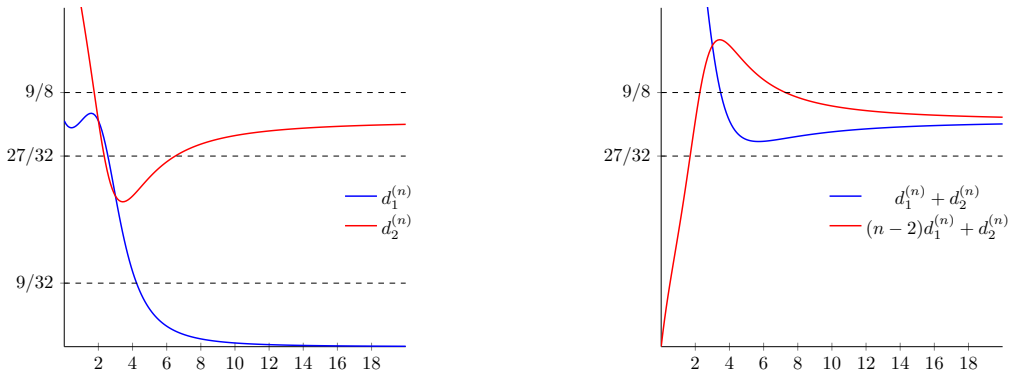
\begin{figure}[h]\label{fig. EV-L_n}
			
			\centering
			
			\begin{minipage}{0.45\textwidth} 
				\centering
			\begin{tikzpicture}[scale=0.7]
				\begin{axis}[
					axis lines=middle,
					axis line style={-},
					xmin=0, xmax=20,
					ytick={9/32,27/32,9/8},
					yticklabels={$9/32$,$27/32$,$9/8$},
					xtick={2,4,6,8,10,12,14,16,18},
					ymin=0, ymax=1.5,
					width=8cm,
					height=8cm,
					legend style={ at={(1.05,0.5)}, draw=none} 
					]
					\addplot[thick, blue,domain=0:20,samples=200] {(12)/(x^3-3*x^2+2*x+12)};
					\addlegendentry{$d_1^{(n)}$}
					\addplot[thick,red,domain=0:20,samples=200] {(x^3-3*x^2-4*x+24)/(x^3-3*x^2+2*x+12)};
					\addlegendentry{$d_2^{(n)}$};
					\addplot[dashed,domain=0:20,samples=200]{27/32} ;
					\addplot[dashed,domain=0:20,samples=200]{9/8};
					\addplot[dashed,domain=0:20,samples=200] {9/32};	
				\end{axis}
			\end{tikzpicture}
			\end{minipage}
			\hspace{0.05\textwidth}
			\begin{minipage}{0.45\textwidth}
				\centering
					\begin{tikzpicture}[scale=0.7]
					\begin{axis}[
						axis lines=middle,
						axis line style={-},
						xmin=0, xmax=20,
						ytick={27/32,9/8},
						yticklabels={$27/32$,$9/8$},
						xtick={2,4,6,8,10,12,14,16,18},
						ymin=0, ymax=1.5,
						width=8cm,
						height=8cm,
						legend style={ at={(1.05,0.5)}, draw=none} 
						]
						\addplot[thick, blue,domain=0:20,samples=200] {(12)/(x^3-3*x^2+2*x+12)+(x^3-3*x^2-4*x+24)/(x^3-3*x^2+2*x+12)};
						\addlegendentry{$d_1^{(n)}+d_2^{(n)}$}
						\addplot[thick,red,domain=0:20,samples=200] {(12*(x-2))/(x^3-3*x^2+2*x+12)+(x^3-3*x^2-4*x+24)/(x^3-3*x^2+2*x+12)};
						\addlegendentry{$(n-2)d_1^{(n)}+d_2^{(n)}$};
						\addplot[dashed,domain=0:20,samples=200] {9/8};
						\addplot[dashed,domain=0:20,samples=200] {27/32};	
					\end{axis}
				\end{tikzpicture}
			\end{minipage}
			\caption{Eigenvalues of the pre-Einstein derivation of  $L_n$}
		\end{figure}
	\end{proof}
	We can see from the example above that for all $n>3$ it holds that
	\[0<d_1^{(n)}<d_2^{(n)}.\]
	This implies in particular that the eigenvalues of the pre-Einstein derivation of $L_n$ are pairwise distinct for all $n>3$. Since $L_3(\cong\h_3)$ has one nice basis up to equivalence we find the following result.
	\begin{theorem}
		For all $n\in\N_{\geq 3}$ it holds that $\nu_{L_n}=1.$
	\end{theorem}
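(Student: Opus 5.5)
We will prove that $\nu_{L_n}=1$ by combining the corollary stating that a simple pre-Einstein derivation forces a unique nice basis up to equivalence with the explicit computation in \Cref{ex. Nik-Der-L_n}. Existence is immediate: the standard basis $\{e_1,\dots,e_n\}$ is nice, so $\nu_{L_n}\geq 1$. It remains to show uniqueness up to equivalence. The case $n=3$ is handled separately, since $L_3\cong\h_3$ and $\nu_{\h_3}=1$ is known (e.g.\ by \cite{Gorbatsevich}, or directly: any nice basis of $\h_3$ contains a spanning vector of the one-dimensional center $[\h_3,\h_3]$, and the other two vectors bracket to a nonzero multiple of it, so mapping them to $e_1,e_2,e_3$ after rescaling gives an automorphism).

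For $n\geq 4$ we show that the pre-Einstein derivation $N^{(n)}$ is simple. Its eigenvalues are $d_1^{(n)}$ and $kd_1^{(n)}+d_2^{(n)}$ for $k=0,\dots,n-2$. If $d_1^{(n)}>0$, the values $kd_1^{(n)}+d_2^{(n)}$ are strictly increasing in $k$, hence pairwise distinct. If moreover $d_1^{(n)}<d_2^{(n)}$, then $d_1^{(n)}$ is strictly smaller than all of them. So it suffices to check
\[0<d_1^{(n)}<d_2^{(n)}\quad (n\geq 4).\]
The denominator $n^3-3n^2+2n+12=n(n-1)(n-2)+12$ is positive, which gives $d_1^{(n)}>0$. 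The inequality $d_1^{(n)}<d_2^{(n)}$ reduces to
\[12<n^3-3n^2-4n+24,\quad\text{i.e.}\quad n^3-3n^2-4n+12=(n-3)(n^2-4)>0,\]
which holds exactly when $n>3$ (and $n\geq 2$). This polynomial check is the only computational step, and it is routine.

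The substantive step is the uniqueness argument behind the corollary, which I would spell out as follows. Let $\mathcal{B}$ be any nice basis of $L_n$. By \Cref{prop. NB->diag. Nik.}, $\mathcal{B}$ is equivalent to a nice basis $\mathcal{B}'$ diagonalising the fixed pre-Einstein derivation $N^{(n)}$, which is diagonal on the standard basis. Since $N^{(n)}$ has pairwise distinct eigenvalues, each eigenspace is one-dimensional and spanned by a standard basis vector. Hence every element of $\mathcal{B}'$ is a nonzero multiple of some $e_i$, and the identity map is an automorphism sending $\mathcal{B}'$ to multiples of the standard basis. Therefore $\mathcal{B}\sim\mathcal{B}'\sim\{e_1,\dots,e_n\}$. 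The one point needing care is that \Cref{prop. NB->diag. Nik.} lets us choose the equivalent nice basis so that a \emph{prescribed} pre-Einstein derivation is diagonal. This is exactly its second formulation, which relies on uniqueness of $N$ up to automorphism conjugation.
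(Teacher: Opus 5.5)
Your proof is correct and follows the paper's own argument: the paper likewise notes $0<d_1^{(n)}<d_2^{(n)}$ for $n>3$, concludes that the pre-Einstein derivation of $L_n$ is simple, applies the corollary that a simple pre-Einstein derivation gives a unique nice basis up to equivalence, and treats $L_3\cong\h_3$ separately. Your factorization $n^3-3n^2-4n+12=(n-3)(n-2)(n+2)$ and your unpacking of that corollary via the second formulation of \Cref{prop. NB->diag. Nik.} supply details the paper leaves implicit; the paper reads the inequality off the example and does not spell out the corollary's proof.
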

	\section{Almost abelian Lie algebras}\label{sect. Almost-abelian}
	In this section we will consider \textit{almost abelian Lie algebras}, these are Lie algebras with an abelian ideal of codimension one. Each such Lie algebra can be written as a semi-direct product $\R f\ltimes_A\R^n$, where $A\in\End(\R^{n})$. We can write an arbitrary element in $\R f\ltimes_A\R^n$ as
	\[\alpha f+X\]
	where $
	X\in\R^n$ and $\alpha\in \R.$ The Lie bracket of two elements in $\R f\ltimes_A\R^n$ is then given by
	\[[\alpha_1 f+X_1,\alpha_2f+X_2]=A(\alpha_1 X_2 -\alpha_2X_1).\] We will first describe what Lie algebras of the form $\R f\ltimes_A\R^n$ admit a nice basis and afterwards we will discuss the number of nice bases.
	\subsection{Existence of a nice basis}
	We will start by considering the case where $\R f\ltimes_A\R^n$ is a nilpotent Lie algebra. Note that $\R f\ltimes_A\R^n$ is nilpotent if and only if $A$ is nilpotent.
	\begin{lemma}\label[lemma]{lem. A-nilpotent->NB}
		If $A\in\End(\R^{n})$ is nilpotent, then $\R f\ltimes_A\R^n$ admits a nice basis.
	\end{lemma}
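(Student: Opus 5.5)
The plan is to use the Jordan normal form of the nilpotent matrix $A$. Since $A$ is nilpotent, all its eigenvalues are $0\in\R$, so $A$ has a real Jordan form. There is therefore a basis of $\R^n$ that splits into chains
\[\{e^{(i)}_1,\dots,e^{(i)}_{m_i}\},\qquad i\in\{1,\dots,r\},\qquad m_1+\dots+m_r=n,\]
with $Ae^{(i)}_j=e^{(i)}_{j+1}$ for $j<m_i$ and $Ae^{(i)}_{m_i}=0$. Our candidate nice basis for $\R f\ltimes_A\R^n$ is $\mathcal{B}=\{f\}\cup\{e^{(i)}_j\}$.

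Next we compute the structure constants. Since $\R^n$ is abelian, $[e^{(i)}_j,e^{(s)}_t]=0$. The only possibly nonzero brackets are
\[[f,e^{(i)}_j]=Ae^{(i)}_j=e^{(i)}_{j+1}\quad (j<m_i),\]
and $[f,e^{(i)}_{m_i}]=0$. Condition (1) of the definition holds at once, because each bracket of two basis elements is either $0$ or a single basis vector.

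For condition (2), suppose $c_{ij}^k\neq0$ and $c_{st}^k\neq0$ for the same target $k$. Every nonzero bracket involves $f$, so we have $\{i,j\}=\{f,e^{(a)}_b\}$ and $\{s,t\}=\{f,e^{(c)}_d\}$, with $e^{(a)}_{b+1}=e^{(c)}_{d+1}$. Distinct Jordan chain elements have distinct images under $A$ whenever the image is nonzero, because $A$ maps the non-terminal chain vectors injectively onto other basis vectors. Hence $(a,b)=(c,d)$, and so $\{i,j\}=\{s,t\}$. The two pairs never meet in $f$ alone, because identical targets force identical pairs. This completes the verification.

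I do not expect a real obstacle. The only point needing care is that the Jordan form exists over $\R$, which holds because the spectrum of $A$ is $\{0\}$. The injectivity argument in condition (2) is the one step worth stating explicitly.
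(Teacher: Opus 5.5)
Your proposal is correct and follows the paper's own proof: take a Jordan basis of the nilpotent $A$ (which exists over $\R$) and adjoin $f$. You also write out the verification of conditions (1) and (2), which the paper leaves as ``easy to see''.
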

	\begin{proof}
		As $A$ is a nilpotent transformation, we can find a basis $\mathcal{B}$ for $\R^n$ such that the matrix of $A$ with respect to this basis becomes its Jordan form
		\[\begin{pmatrix}
			A_1&0&\dots&0\\
			0&A_2&\dots&0\\
			\vdots&\vdots&\ddots&\vdots\\
			0&0&\dots&A_m\\
		\end{pmatrix},\]
		where $A_i\in\R^{k_i\times k_i}$ is given by
		\begin{equation}\label{eq. matrix-nilp}\begin{pmatrix}
			0 & 0 & 0 & \cdots & 0 & 0 & 0 \\
			1 & 0 & 0 & \cdots & 0 & 0 & 0 \\
			0 & 1 & 0 & \cdots & 0 & 0 & 0 \\
			\vdots & \vdots & \vdots & \ddots & \vdots & \vdots & \vdots \\
			0 & 0 & 0 & \cdots & 0 & 0 & 0 \\
			0 & 0 & 0 & \cdots & 1 & 0 & 0 \\
			0 & 0 & 0 & \cdots & 0 & 1 & 0
		\end{pmatrix},\end{equation}
		and $k_1+\dots+k_m=n.$ It is now easy to see that $\{f\}\cup\mathcal{B}$ is a nice basis for $\R f\ltimes_A\R^n$.
	\end{proof}
	For the general case, we have the following proposition.
	\begin{proposition}\label[proposition]{prop. nonzero-elt-col-row}
		Let $A\in\End(\R^{n})$, then $\g=\R f\ltimes_A\R^n$ admits a nice basis if and only if $\R^n$ has a basis with respect to which the matrix of $A$ has in each row and each column at most one nonzero element.
	\end{proposition}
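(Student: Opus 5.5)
The plan is to prove both directions directly from the definition of a nice basis. The key observation is that the column condition on the matrix of $A$ corresponds to condition (1), and the row condition corresponds to condition (2).

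\emph{Sufficiency.} Suppose $\{X_1,\dots,X_n\}$ is a basis of $\R^n$ in which $A$ has at most one nonzero entry in each row and each column. Then there is a partially defined injective map $\sigma$ on indices and scalars $a_j\in\R_0$ such that $AX_j=a_jX_{\sigma(j)}$ when $\sigma(j)$ is defined, and $AX_j=0$ otherwise. I claim $\{f,X_1,\dots,X_n\}$ is nice.
\begin{itemize}
\item The only nonzero brackets are $[f,X_j]=a_jX_{\sigma(j)}$, since $[X_i,X_j]=0$. So condition (1) holds; this is the column condition.
\item For condition (2), the only pairs that could violate it are $\{f,X_j\}$ and $\{f,X_l\}$ with $j\neq l$ and both brackets nonzero multiples of the same $X_k$.
\item That would mean row $k$ of the matrix has nonzero entries in columns $j$ and $l$, which the row condition forbids.
\end{itemize}

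\emph{Necessity.} Let $\mathcal{B}=\{Y_0,\dots,Y_n\}$ be a nice basis of $\g$ and write $Y_i=\alpha_if+X_i$ with $X_i\in\R^n$. If $A=0$ the statement is trivial, so assume $A\neq 0$. One cannot assume that $\mathcal{B}$ contains a basis of $\R^n$, since $\R^n$ need not be the only abelian ideal of codimension one. So I will build a new basis of $\R^n$ from $\mathcal{B}$.
\begin{itemize}
\item Since $\mathcal{B}$ spans $\g$, some element, say $Y_0$, has $\alpha_0\neq 0$.
\item For $i\geq 1$ set $X_i'=Y_i-\frac{\alpha_i}{\alpha_0}Y_0\in\R^n$. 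These $n$ vectors are linearly independent, because $\mathcal{B}$ is, so they form a basis of $\R^n$.
\item For $X\in\R^n$ we have $[Y_0,X]=\alpha_0AX$. Hence it suffices to show that $\mathrm{ad}(Y_0)|_{\R^n}$ has at most one nonzero entry per row and column in the basis $\{X_i'\}$.
\end{itemize}

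The step needing care is checking that the brackets land inside the new basis. We have $[Y_0,X_i']=[Y_0,Y_i]$, which by condition (1) is $cY_k$ for a single basis element $Y_k$. The subtle point is that $Y_k$ must be one of the $X_k'$. This holds because $cY_k\in[\g,\g]=\im A\subseteq\R^n$. So if $c\neq 0$, then $\alpha_k=0$, hence $k\neq 0$ and $Y_k=X_k'$. Therefore each $X_i'$ is sent to a multiple of a single $X_k'$, which is the column condition.

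For the row condition, suppose $[Y_0,X_i']=[Y_0,Y_i]$ and $[Y_0,X_j']=[Y_0,Y_j]$ are both nonzero multiples of the same $X_k'=Y_k$, with $i\neq j$. Then the index pairs $\{0,i\}$ and $\{0,j\}$ are distinct but intersect, contradicting condition (2) for $\mathcal{B}$. Dividing by $\alpha_0$ does not change the zero pattern, so the matrix of $A$ in the basis $\{X_1',\dots,X_n'\}$ has the required form.
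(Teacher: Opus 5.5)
Your proof is correct, but it takes a different route from the paper's.

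\textbf{The paper's route.} It splits into cases. For nilpotent $A$ it invokes the Jordan-form construction of \Cref{lem. A-nilpotent->NB}. For non-nilpotent $A$ it uses \Cref{lem. UCS/LCS-adapted} to find an element of the nice basis lying in $\im(A)\setminus\ker(A)$. It then applies condition (2) to the brackets of this element with two hypothetical basis elements having nonzero $f$-component. This shows that \emph{exactly one} element of $\mathcal{B}$ lies outside $\R^n$, so the other $n$ elements of $\mathcal{B}$ are already the desired basis of $\R^n$.

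\textbf{Your route.} You bypass both the case split and the lemma on central series. The shear $X_i'=Y_i-\frac{\alpha_i}{\alpha_0}Y_0$ leaves every bracket with $Y_0$ unchanged. Every nonzero bracket lies in $\R^n$, so its target $Y_k$ has $\alpha_k=0$ and hence equals $X_k'$. Conditions (1) and (2) for pairs containing $Y_0$ then translate directly into the column and row conditions. This is shorter and uniform. It also covers the case $A^2=0$, where, as you correctly note, $\mathcal{B}$ itself need not contain a basis of $\R^n$.

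\textbf{What the paper's route buys.} It gives information about the given nice basis itself rather than a modified one: when $A^2\neq 0$, every nice basis has exactly one element outside $\R^n$. This is what \Cref{cor. form of nice basis RfxR^n} extracts from the proof, and the later counting of nice bases up to equivalence relies on it. Your argument only produces some nice basis $\{Y_0,X_1',\dots,X_n'\}$ adapted to $\R^n$. It says nothing about how many elements of $\mathcal{B}$ lie outside $\R^n$, so it does not give that corollary by itself.
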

	\begin{proof}
		The \textit{`if'-part} of the proposition is trivial. Conversely, suppose that $\R f\ltimes_A\R^n$ admits a nice basis. If $A$ is a nilpotent transformation it follows from the previous result that $A$ has the required property, so assume that $A$ is not a nilpotent transformation and $\mathcal{B}=\{\alpha_1f+X_1,\dots,\alpha_nf+X_n,\alpha_{n+1}f+X_{n+1}\}$ is a nice basis. 
		
		We claim that there is exactly one $i\in\{1,\dots,n+1\}$ such that $\alpha_i\neq 0.$ For this note that
		\[0 \neq \im(A) = [\g,\g] \subset \R^n\]
		\[0 \neq \im(A^2) =  [\g,[\g,\g]] \subset \R^n\]
		Since $[\g,\g]$ and $[\g,[\g,\g]]$ are spanned by elements of the nice basis by Lemma \ref{lem. UCS/LCS-adapted}, it must hold that there is an $i\in\{1,\dots,n\}$ such that $\alpha_i$ equals zero and $X_i\in\im(A)\setminus\ker(A)$. Now suppose that $\alpha_k$
		and $\alpha_l$ are both nonzero. It then follows that
		\[[\alpha_kf+X_k,X_i]=\alpha_kAX_i,\]
		\[[\alpha_lf+X_l,X_i]=\alpha_lAX_i,\]
		which is only possible if $\alpha_k=0$ or $\alpha_l=0$. On the other hand there must be at least one $k$ such that $\alpha_k\neq0$ as otherwise we do not have a basis. 

		We assume without loss of generality that $\alpha_{n+1}\neq 0$, then 
		\[[\alpha_{n+1}f+X_{n+1},X_i]=\alpha_{n+1}AX_i,\]
		so either $AX_i=0$ or $AX_i$ is a nonzero multiple of some $X_k$ for $k\in\{1,\dots,n\}$. In the second case, there cannot exist a $j\neq i$ such that $AX_j$ is a nonzero multiple of $X_k$ because we assumed that the basis was nice. So if we express $A$ as a matrix with respect to the basis $\{X_1, \ldots, X_n\} \subset \R^n$, it follows that $A$ has in each row and each column at most one nonzero element. This proves the other implication.
	\end{proof}
	From the proof of the proposition above we can deduce the following corollary.
	\begin{corollary}\label[corollary]{cor. form of nice basis RfxR^n}
If $A^2\neq 0$, then it holds that every nice basis of $\R f\ltimes_A\R^n$ is equivalent to \[\{f,X_1,\dots,X_n\},\]
where the matrix of $A$ with respect to $\{X_1,\dots,X_n\}$ has at most one nonzero entry in each column and each row.
	\end{corollary}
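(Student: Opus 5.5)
The plan is to rerun the argument of \Cref{prop. nonzero-elt-col-row} with the weaker hypothesis $A^2\neq 0$, which is exactly what that proof uses, and then to build an explicit automorphism that carries the given nice basis to one of the stated form.

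Let $\mathcal{B}=\{\alpha_1f+X_1,\dots,\alpha_{n+1}f+X_{n+1}\}$ be a nice basis of $\g=\R f\ltimes_A\R^n$. The proof of \Cref{prop. nonzero-elt-col-row} never used non-nilpotency of $A$; it only needed $\im(A^2)\neq 0$. Under that assumption, \Cref{lem. UCS/LCS-adapted} gives a basis element $X_i$ with $\alpha_i=0$ and $X_i\in\im(A)\setminus\ker(A)$. If $\alpha_k$ and $\alpha_l$ were both nonzero for $k\neq l$, then $[\alpha_kf+X_k,X_i]$ and $[\alpha_lf+X_l,X_i]$ would be nonzero multiples of the same element $AX_i$. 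This contradicts niceness, since $\{k,i\}\neq\{l,i\}$ but the two pairs intersect. So, after reordering, exactly one coefficient $\alpha_{n+1}$ is nonzero, $\{X_1,\dots,X_n\}\subset\R^n$ is a basis, and $A$ has at most one nonzero entry in each row and column with respect to it. To get this last point, write $Y:=\alpha_{n+1}f+X_{n+1}$. Then $[Y,X_j]=\alpha_{n+1}AX_j$ is either $0$ or a multiple of a single basis element, and niceness stops two different $X_j$ from being sent to multiples of the same $X_k$.

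It remains to replace $Y$ by $f$ through an automorphism. Define $\varphi$ by $\varphi(X_j)=X_j$ for $j\le n$ and $\varphi(Y)=\alpha_{n+1}f$; this is a linear bijection. The main check is that $\varphi$ preserves brackets.
\begin{itemize}
\item Brackets among the $X_j$ vanish on both sides.
\item For $j\le n$, $[\varphi(Y),\varphi(X_j)]=\alpha_{n+1}AX_j$.
\item Since $\varphi$ is the identity on $\R^n$ and $[Y,X_j]=\alpha_{n+1}AX_j\in\R^n$, we also have $\varphi([Y,X_j])=\alpha_{n+1}AX_j$.
\end{itemize}
So $\varphi$ is an automorphism sending $\mathcal{B}$ to multiples of $\{f,X_1,\dots,X_n\}$, and $\mathcal{B}$ is equivalent to this basis.

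The only real obstacle is confirming that the key step of \Cref{prop. nonzero-elt-col-row} genuinely needs only $A^2\neq0$. We need $\im(A)\not\subset\ker(A)$ so that an adapted basis element of $[\g,\g]$ lies outside $\ker A$. This holds because $[\g,\g]=\im A$ is spanned by basis vectors. If all of them lay in $\ker A$, then $A(\im A)=0$, contradicting $A^2\neq 0$.
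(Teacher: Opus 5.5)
Your proposal is correct and follows the paper's route: the corollary is obtained by rerunning the proof of \Cref{prop. nonzero-elt-col-row}, which only needs $\im(A^2)\neq 0$, to show that exactly one nice basis vector has a nonzero $f$-component and that $A$ has the required row and column form on the remaining ones. Your explicit automorphism, which fixes $\R^n$ and sends $\alpha_{n+1}f+X_{n+1}$ to $\alpha_{n+1}f$, makes precise the replacement of that vector by $f$, a step the paper leaves implicit.
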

	If there is a basis $\{X_1,\dots,X_n\}$ for $\R^n$ such that the matrix of $A$ with respect to this basis has at most one nonzero entry in each row and each column, then we can find a permutation $\sigma\in\mathcal{S}_n$ and $\alpha_1,\dots,\alpha_n\in\R$ such that
	\[AX_i=\alpha_iX_{\sigma(i)}.\]
	If $m=\ord(\sigma)$ it then holds that $A^m$ is diagonalizable over $\R.$ However, the existence of such an $m\in\N$ is not sufficient for the matrix $A$ to have a basis with respect to which it has at most one nonzero entry in each column and each row.
	
\begin{example}	Consider the standard basis $\{e_1,e_2\}$ for $\R^2$, then we can define the linear map $A$ with respect to this basis by the matrix
	\[\begin{pmatrix}
		1&-1\\
		1&1
	\end{pmatrix}.\]
	Note that $A$ is not diagonalizable over $\R$ as it has no real eigenvalues, it is however diagonalizable over $\mathbb{C}$. As 
	\[A^2=\begin{pmatrix}
		0&-2\\
		2&0
	\end{pmatrix},\quad A^4=\begin{pmatrix}
		-4&0\\
		0&-4
	\end{pmatrix}, \]	we see that $A^2$ does not have any real eigenvalues either, but again it is diagonalizable over $\mathbb{C}$. Now suppose that $\R^2$ has a basis $\{X_1,X_2\}$ such that $A$ has at most one nonzero entry in each row and each column. Then either $AX_1=\alpha_1X_1$ and $AX_2=\alpha_2X_2$ which is impossible as $A$ has no real eigenvalues. As $A^2$ has no real eigenvalues it also immediately follows that the existence of $\{X_1,X_2\}$ such that $AX_1=\alpha X_2$ and $AX_2=\beta X_1$ is absurd.
	We thus conclude that neither (real) diagonalizablility of some power of $A$, nor diagonalizablility of $A$ over $\C$ is sufficient for $\R f\ltimes_A\R^n$ to admit a nice basis.
	\end{example}
		If $n=2$, we can find the following necessary and sufficient conditions for $\R f\ltimes_A\R^2$ to admit a nice basis.
	\begin{corollary}
		Suppose $A\in\End(\R^2)$, then $\R^2\ltimes_A\R$ admits a nice basis if and only if $A^2$ is diagonalizable over $\R$ with only one eigenvalue or $A$ is diagonalizable over $\R$.
	\end{corollary}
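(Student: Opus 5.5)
By \Cref{prop. nonzero-elt-col-row}, $\R f\ltimes_A\R^2$ admits a nice basis if and only if $\R^2$ has a basis $\{X_1,X_2\}$ in which the matrix of $A$ has at most one nonzero entry in each row and each column. So the statement reduces to a purely linear-algebraic claim about $2\times 2$ real matrices. Such a matrix is either diagonal, $\Dg(\alpha_1,\alpha_2)$, or anti-diagonal, with $AX_1=\alpha X_2$ and $AX_2=\beta X_1$; the case with only one nonzero entry is included in one of these shapes. We check both implications.

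For the forward direction there are two cases. If the matrix is diagonal, $A$ is diagonalizable over $\R$. If it is anti-diagonal, then $A^2X_1=\alpha\beta X_1$ and $A^2X_2=\alpha\beta X_2$, so $A^2=\alpha\beta\,\Id$. Hence $A^2$ is diagonalizable over $\R$ with the single eigenvalue $\alpha\beta$.

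For the converse, first suppose $A$ is diagonalizable over $\R$. An eigenbasis gives a diagonal matrix, which has at most one nonzero entry per row and column. Now suppose $A^2$ is diagonalizable with a single eigenvalue $\lambda$, so $A^2=\lambda\Id$. We need a basis in which $A$ is anti-diagonal or diagonal, and we split into two subcases.

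\begin{enumerate}[(1)]
\item If $A$ is a scalar multiple of $\Id$, any basis works.
\item Otherwise some vector $X_1$ is not an eigenvector of $A$. Put $X_2:=AX_1$. Then $\{X_1,X_2\}$ is linearly independent, $AX_1=X_2$, and $AX_2=A^2X_1=\lambda X_1$. This is an anti-diagonal matrix, as required. (The value of $\lambda$ is irrelevant here: $\lambda=0$, i.e. nilpotent $A$, is also covered.)
\end{enumerate}

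The main obstacle is only bookkeeping. We must make sure the case of one nonzero entry, and a zero entry on the anti-diagonal, are covered by both directions. We must also ensure the cyclic-vector choice in subcase (2) always exists, which holds precisely when $A\notin\R\Id$. Finally, the statement writes the Lie algebra as $\R^2\ltimes_A\R$; we read this as $\R f\ltimes_A\R^2$, consistent with the rest of the section.
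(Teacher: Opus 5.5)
Your proof is correct and follows essentially the paper's route. You use \Cref{prop. nonzero-elt-col-row} to reduce to the diagonal/anti-diagonal dichotomy for $2\times 2$ matrices, and for the converse you take the pair $X, AX$ when $A^2=\lambda\Id$. The one difference is that picking a non-eigenvector of a non-scalar $A$ handles $\lambda=0$ and $\lambda\neq 0$ at once, whereas the paper treats $A^2=0$ separately via \Cref{lem. A-nilpotent->NB} and, for $\lambda\neq 0$, argues that $A$ has no real eigenvalues.
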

	\begin{proof}
		If $\R f\ltimes_A\R^2$ admits a nice basis, then it is clear that either $A$ or $A^2$ is diagonalizable by Proposition \ref{prop. nonzero-elt-col-row}. 
		
		If $A$ is diagonalizable, the result is clear. If $A^2=0$, then $A$ is nilpotent and we know by \Cref{lem. A-nilpotent->NB} that $\R f\ltimes_A\R^2$ admits a nice basis. Now suppose that $A$ is not diagonalizable over $\R$, but $A^2$ is with only one nonzero eigenvalue $\lambda$. From this it follows that $A$ has no real eigenvalues. Suppose that the  eigenspace of $\lambda$ as an eigenvector for $A^2$ is given by $\Span\{X,Y\}$ and define
		\[X_1:=X,\quad X_2:=AX.\]
		Since $A$ has no real eigenvalues, it is clear that $X_1$ and $X_2$ must be linearly independent.
	\end{proof}
	
	To find sufficient conditions for arbitrary $n$, we need the following proposition.
	\begin{proposition}\label[proposition]{prop. multiple of id-->NB}
		Consider the Lie algebra $\R f\ltimes_A\R^n,$ and suppose that $A^n=\lambda\Id$ for $\lambda\neq 0$ and $A$ has $n$ distinct (possibly complex) eigenvalues. Then it follows that $\R f\ltimes_A\R^n$ admits a nice basis.
	\end{proposition}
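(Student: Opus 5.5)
By \Cref{prop. nonzero-elt-col-row}, it suffices to find a basis of $\R^n$ in which the matrix of $A$ has at most one nonzero entry in each row and each column. The natural candidate is a cyclic basis: choose a vector $X\in\R^n$ and set
\[X_1:=X,\quad X_2:=AX,\quad \dots,\quad X_n:=A^{n-1}X.\]
If these vectors form a basis, then $AX_i=X_{i+1}$ for $i<n$ and $AX_n=A^nX=\lambda X_1$. The matrix of $A$ is then $\lambda$ times a permutation matrix of an $n$-cycle, up to the entries being $1$ or $\lambda$. Each row and each column contains exactly one nonzero entry, which is what we need.

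The only real work is therefore to show that some $X\in\R^n$ is a cyclic vector for $A$. Since $A$ has $n$ distinct eigenvalues, its minimal polynomial equals its characteristic polynomial $\chi_A$, which has degree $n$. I would make this concrete over $\C$. Diagonalize $A$ with complex eigenvectors $v_1,\dots,v_n$ for the eigenvalues $\mu_1,\dots,\mu_n$. For $X=\sum c_iv_i$, the vectors $X,AX,\dots,A^{n-1}X$ are linearly independent if and only if every $c_i\neq 0$, since their coordinate matrix is $\mathrm{diag}(c_i)$ times a Vandermonde matrix in the distinct $\mu_i$.

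It remains to check that such an $X$ can be chosen real. The function $X\mapsto\det(X,AX,\dots,A^{n-1}X)$ is a polynomial in the real coordinates of $X$. It is not identically zero on $\C^n$, so it is not identically zero on $\R^n$ either, because a polynomial vanishing on all of $\R^n$ is the zero polynomial. Alternatively, one can quote the standard fact that cyclicity over $\R$ is equivalent to the minimal polynomial having degree $n$, a condition that does not depend on the field. Either way we obtain a real cyclic vector. This step, descending from $\C$ to $\R$, is the only place I expect any subtlety.

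The hypothesis $A^n=\lambda\Id$ with $\lambda\neq0$ is exactly what closes the cycle: $AX_n=\lambda X_1$ with a nonzero coefficient, so the column of $X_n$ has its nonzero entry in the row of $X_1$. No other row receives two entries. Applying \Cref{prop. nonzero-elt-col-row} then shows that $\{f,X_1,\dots,X_n\}$ is a nice basis of $\R f\ltimes_A\R^n$.
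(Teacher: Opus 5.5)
Your proposal is correct and follows the paper's proof. Both use the cyclic basis $\{X, AX, \dots, A^{n-1}X\}$, get linear independence from the Vandermonde determinant in the distinct eigenvalues, and close the cycle with $A^nX=\lambda X$, $\lambda\neq 0$. The only difference is how a real cyclic vector is found: the paper takes $w=w_1+\dots+w_n$ with eigenvectors chosen closed under complex conjugation, so $w$ is real by construction, whereas you descend from $\C$ to $\R$ by a polynomial-nonvanishing (or minimal-polynomial) argument, which is equally valid.
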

	\begin{proof}
		Let $z_1,\dots,z_n$ be the distinct eigenvalues for $A$, then we construct a set of eigenvectors $\{w_1,\dots,w_n\}$ in the following way. If $z_i\in\R$, then $w_i$ is taken a real eigenvector corresponding to the eigenvalue $z_i$. If $z_i\in\C\setminus\R$, then $w_i$ is an eigenvector corresponding to the eigenvalue $z_i$, and there exists a $j\neq i$ such that $z_i=\overline{z_j}$ and $w_i=\overline{w_j}.$ It now holds that the vector $w=w_1+w_2+\dots+w_{n}\in\R^n$ and we also have that
		\[Aw=z_1w_1+z_2w_2+\dots+z_nw_n\in\R^n\]
		\[\vdots\]
		\[A^{n-1}w=z_1^{n-1}w_1+z_{2}^{n-1}w_2+\dots+z_n^{n-1}w_n\in\R^n.\]
		Note that
		\[\det\begin{pmatrix}
			1&1&\dots&1\\
			z_1&z_2&\dots&z_n\\
			\vdots&\vdots&\ddots&\vdots\\
			z_1^{n-1}&z_2^{n-1}&\dots&z_n^{n-1}
		\end{pmatrix}=\prod_{1\leq i<j\leq n}(z_j-z_i)\neq 0,\]
		we thus conclude that $\{w,Aw,A^2w,\dots,A^{n-1}w\}$ is a linearly independent set of vectors in $\mathbb{C}^n.$ This implies that $\{w,Aw,A^2w,\dots,A^{n-1}w\}$ forms a basis for $\R^n.$ It is now easy to see that the matrix of $A$ with respect to this basis is of the form
		\begin{equation}\label{eq. matrix-nth roots}\begin{pmatrix}
				0 & 0 & 0 & \cdots & 0 & 0 & \lambda \\
				1 & 0 & 0 & \cdots & 0 & 0 & 0 \\
				0 & 1 & 0 & \cdots & 0 & 0 & 0 \\
				\vdots & \vdots & \vdots & \ddots & \vdots & \vdots & \vdots \\
				0 & 0 & 0 & \cdots & 0 & 0 & 0 \\
				0 & 0 & 0 & \cdots & 1 & 0 & 0 \\
				0 & 0 & 0 & \cdots & 0 & 1 & 0
			\end{pmatrix},\end{equation}
		therefore we find that $\R f\ltimes_A\R^n$ admits a nice basis.
	\end{proof}
	\begin{remark}\label[remark]{rem. char-poly}
		Note that the characteristic polynomial of \eqref{eq. matrix-nth roots} is given by 
		\[x^n-\lambda.\]
		As $\lambda\neq 0,$ we know that the eigenvalues of this matrix are the $n^{th}$ roots of $\lambda.$ 
	\end{remark}
	With the remark above in mind we will prove the following lemma. 
	\begin{lemma}\label[lemma]{lem. perm-basis}
		Suppose $A\in\End(\R^n)$ and there is a basis $\{X_1,\dots,X_n\}$ for $\R^n$ such that \[AX_n=\alpha_nX_{1},\quad AX_i=\alpha_iX_{i+1}\quad \forall i\in{1,\dots,n-1},\]
		then $A^n=\lambda\Id$ for some $\lambda \in \R$. Moreover, if $\lambda\neq 0$ then $A$ is diagonalizable with distinct eigenvalues which correspond to the $n^{th}$ roots of $\lambda.$ 
	\end{lemma}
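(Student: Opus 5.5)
We will compute $A^n$ directly on the basis $\{X_1,\dots,X_n\}$, with indices read modulo $n$ so that $X_{n+1}=X_1$. Induction on $k$ gives
\[A^kX_i=\Big(\prod_{j=0}^{k-1}\alpha_{i+j}\Big)X_{i+k}\]
for all $i$ and all $k\ge 1$. Taking $k=n$, the path $i,i+1,\dots,i+n-1$ runs once through every index, so the coefficient is $\lambda:=\alpha_1\alpha_2\cdots\alpha_n$, independently of $i$. Also $X_{i+n}=X_i$. Hence $A^nX_i=\lambda X_i$ for every $i$, so $A^n=\lambda\Id$ with $\lambda\in\R$. This proves the first claim.

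Now assume $\lambda\neq 0$. Then every $\alpha_i$ is nonzero. We rescale the basis so that $A$ takes exactly the companion form \eqref{eq. matrix-nth roots}. Set $Y_1:=X_1$ and $Y_{k+1}:=AY_k$ for $k=1,\dots,n-1$. By the formula above, $Y_k=(\alpha_1\cdots\alpha_{k-1})X_k$, which is a nonzero multiple of $X_k$, so $\{Y_1,\dots,Y_n\}$ is a basis. Moreover
\[AY_n=\alpha_1\cdots\alpha_{n-1}\,AX_n=\alpha_1\cdots\alpha_n\,X_1=\lambda Y_1.\]
Therefore the matrix of $A$ in the basis $\{Y_1,\dots,Y_n\}$ is precisely \eqref{eq. matrix-nth roots}. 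By \Cref{rem. char-poly} its characteristic polynomial is $x^n-\lambda$.

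Finally, $x^n-\lambda$ has $n$ distinct complex roots, namely the $n^{th}$ roots of $\lambda$. Indeed, its derivative $nx^{n-1}$ vanishes only at $0$, and $0$ is not a root because $\lambda\neq0$. So $A$ has $n$ distinct eigenvalues, and a linear map with $n$ distinct eigenvalues is diagonalizable (over $\C$). This gives the second claim.

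The only point needing care is the interpretation of "diagonalizable": when $n\ge 3$, or when $\lambda<0$, some eigenvalues are non-real, so diagonalizability must be understood over $\C$. This fits the statement's reference to the $n^{th}$ roots of $\lambda$. The rest is routine bookkeeping with cyclic indices.
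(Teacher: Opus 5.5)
Your proof is correct, but for the second claim it takes a different route from the paper. The paper does not use the companion form. It takes the $n$ distinct complex $n^{th}$ roots $\lambda_1,\dots,\lambda_n$ of $\lambda$ and writes down explicit eigenvectors
\[V_k=X_1+\frac{\alpha_1}{\lambda_k}X_2+\frac{\alpha_1\alpha_2}{\lambda_k^2}X_3+\dots+\frac{\alpha_1\cdots\alpha_{n-1}}{\lambda_k^{n-1}}X_n .\]
The identity $AV_k=\lambda_kV_k$ is checked directly; the coefficient of $X_1$ works out because $\lambda/\lambda_k^{n-1}=\lambda_k$. An eigenvector for each of $n$ distinct eigenvalues then gives diagonalizability and identifies the spectrum at once.

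You instead rescale the given basis along the cyclic vector $X_1$ so that $A$ takes the normal form \eqref{eq. matrix-nth roots}. You then read off the characteristic polynomial $x^n-\lambda$ from \Cref{rem. char-poly}, and finish with separability of $x^n-\lambda$ and the standard fact that $n$ distinct eigenvalues force diagonalizability over $\C$.

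The paper's argument is more hands-on and produces an explicit eigenbasis. Yours is more structural and shows slightly more. Any $A$ of this cyclic form with $\lambda\neq 0$ is conjugate over $\R$ to the single matrix \eqref{eq. matrix-nth roots}, which depends only on $\lambda$. This is the same normal form produced in \Cref{prop. multiple of id-->NB}, so your argument ties the lemma directly to that construction.

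Your remark that diagonalizability must be read over $\C$ is the right interpretation of the statement. It matches the paper's use of the complex eigenvectors $V_k$.
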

	\begin{proof}
		The first statement is clear, just take $\lambda=\alpha_1\dots\alpha_n.$ Now let $\lambda_1,\dots,\lambda_n$ be the distinct $n^{th}$ roots of $\lambda$ in $\C.$ Now note that
		\[V_k:=X_1+\frac{\alpha_1}{\lambda_k}X_2+\frac{\alpha_1\alpha_2}{\lambda_k^2}X_3+\dots+\frac{\alpha_1\dots\alpha_{n-1}}{\lambda_k^{n-1}}X_n.\]
		is an eigenvector with eigenvalue $\lambda_k$ for all $k\in\{1,\dots,n\}.$ As we have found an eigenvector for each of the distinct $n^{th}$ roots of $\lambda$ we can conclude.
	\end{proof}
	We can now use the results above to give a characterization for the Lie algebras of the form $\R f\ltimes_A\R^n$ which admit a nice basis. First we need some notation.
	\begin{notation}
		If $n\in\N_0$ and $\lambda\in\R_0$ we will denote by 
		\[S_{\lambda}^{n}:=\{x\in\C\mid x^n=\lambda\}.\]
		It thus holds that $|S_{\lambda}^{n}|=n.$
	\end{notation}
	In the following proposition, we consider the spectrum as a multiset, taking into account the multiplicity of the eigenvalues.
	\begin{proposition}\label[proposition]{prop. total-nice-basis-semidirect}
		If $A\in\End(\R^n)$, then $\R f\ltimes_A\R^n$ admits a nice basis if and only if the Jordan blocks of size greater then 1 have zeros on the diagonal, moreover we can find $\lambda_1,\dots,\lambda_k\in\R_0$ and $n_1,\dots,n_k\in\N_0$ such that
		\[\Spec(A)\setminus\{0\}=\bigcup_{i=1}^k S_{\lambda_i}^{n_i},\]
		where we consider this equality as an equality of multisets.	
	\end{proposition}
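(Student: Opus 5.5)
The plan is to reduce, via \Cref{prop. nonzero-elt-col-row}, to a purely linear-algebraic statement. We must show that $\R^n$ has a basis in which $A$ has at most one nonzero entry per row and column if and only if two conditions hold: every Jordan block of size $>1$ has eigenvalue $0$ (the nonzero-eigenvalue part of $A$ is diagonalizable over $\C$), and the nonzero spectrum decomposes as a multiset union of full root sets $S^{n_i}_{\lambda_i}$ with $\lambda_i\in\R_0$.

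For the \emph{only if} direction, suppose such a basis $\{X_1,\dots,X_n\}$ exists, with $AX_i=\alpha_iX_{\sigma(i)}$ for a permutation $\sigma\in\mathcal S_n$ and scalars $\alpha_i$. We decompose $\sigma$ into disjoint cycles. Each cycle $C$ spans an $A$-invariant subspace $V_C$, and $\R^n=\bigoplus_C V_C$. On $V_C$ there are two cases.
\begin{enumerate}[(1)]
\item If all $\alpha_i$ along the cycle are nonzero, then \Cref{lem. perm-basis} shows that $A|_{V_C}$ is diagonalizable over $\C$ with spectrum exactly $S^{|C|}_{\lambda_C}$, where $\lambda_C=\prod_{i\in C}\alpha_i\neq 0$.
\item If some $\alpha_i$ along the cycle vanishes, then following the cycle from just after a zero coefficient shows that $A|_{V_C}$ is a chain map. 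It is nilpotent, and in fact a single nilpotent Jordan block (or several, if several coefficients vanish), exactly as in \eqref{eq. matrix-nilp}.
\end{enumerate}
Summing over cycles, the nonzero spectrum is $\bigcup_C S^{|C|}_{\lambda_C}$ as multisets. All Jordan blocks for nonzero eigenvalues have size $1$, since $A$ restricted to the sum of the nonnilpotent cycle spaces is diagonalizable over $\C$ and the remaining summand is nilpotent. This gives the stated form.

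For the \emph{if} direction, we split $\R^n=\ker(A^n)\oplus \im(A^n)$, the Fitting decomposition, which is $A$-invariant and defined over $\R$. On $\ker(A^n)$, $A$ is nilpotent, and its Jordan basis from \Cref{lem. A-nilpotent->NB} gives the required form. On $V=\im(A^n)$, $A$ is invertible and diagonalizable over $\C$ by hypothesis, with spectrum $\bigcup_i S^{n_i}_{\lambda_i}$. We would like to split $V=\bigoplus_i V_i$ into real $A$-invariant subspaces with $\Spec(A|_{V_i})=S^{n_i}_{\lambda_i}$. Then \Cref{prop. multiple of id-->NB} applies to each $V_i$: $A|_{V_i}$ has $n_i$ distinct eigenvalues and satisfies $A^{n_i}=\lambda_i\Id$, since it is diagonalizable with eigenvalues the $n_i$-th roots of $\lambda_i$. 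This produces a cyclic basis of the form \eqref{eq. matrix-nth roots}. Concatenating all these bases yields the desired basis of $\R^n$.

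The main obstacle is constructing the real splitting $V=\bigoplus V_i$ when eigenvalues repeat across different root sets, for example when $S^2_1$ and $S^1_1$ both contain $1$, or when several identical $S^{n}_\lambda$ occur. We would do this working over $\C$. Choose a $\C$-eigenbasis of $V$ in which conjugate eigenvalues get conjugate eigenvectors, which is possible since $A$ is real. Then distribute these eigenvectors among the multiset pieces $S^{n_i}_{\lambda_i}$. Each $S^{n_i}_{\lambda_i}$ with $\lambda_i$ real is closed under conjugation, so we can assign conjugate pairs to the same piece, and real eigenvalues get real eigenvectors. The span of the eigenvectors assigned to piece $i$ is then conjugation-stable, hence the complexification of a real $A$-invariant subspace $V_i$. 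The care needed is a consistent bookkeeping of the multiplicities, so that the conjugation-compatible assignment exists. This works because the multiplicity of a nonreal $z$ equals that of $\bar z$ within each piece, so the matching can be chosen by processing each pair $\{z,\bar z\}$ and each real eigenvalue separately.
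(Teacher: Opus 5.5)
Your proposal is correct and follows the paper's proof: the \emph{only if} direction is the same cycle decomposition of $\sigma$ combined with \Cref{lem. perm-basis}, and the \emph{if} direction reduces to \Cref{prop. multiple of id-->NB} applied on each root set $S^{n_i}_{\lambda_i}$. The only difference is that you make explicit the real $A$-invariant splitting (Fitting decomposition plus a conjugation-compatible assignment of eigenvectors to the pieces) that the paper's ``follows immediately'' silently requires, and your bookkeeping for that splitting is sound.
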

	\begin{proof}
		Suppose $\R f\ltimes_A\R^n$ admits a nice basis. We can then find a basis $\{X_1,\dots,X_n\}$ for $\R^n$ such that $A$ has in each row and each column at most one nonzero entry. More specifically we can associate to $A$ an element $\sigma\in\mathcal{S}_n$ such that
		$AX_i=\alpha_iX_{\sigma(i)}$ where $\alpha_i\in\R.$ Now write $\sigma$ as a composition of disjoint cycles $\tau_1,\dots,\tau_k$. We can then reorder our basis $\{X_1,\dots,X_n\}$ such that \[\tau_1=(12\dots n_1),\quad\tau_2=(n_1+1\dots n_1+n_2),\dots, \tau_k=(n_1+\dots+n_{k-1}+1\dots n_1+\dots+n_k).\] Define inductively $N_0=0$ and $N_i=N_{i-1}+n_i$. If we let \[V_i:=\Span\{X_{N_{i-1}+1},\dots,X_{N_i}\},\quad\lambda_i:=\alpha_{N_{i-1}+1}\dots\alpha_{N_i}\] for $1\leq i\leq k,$ then it follows that
		\[\left(A\rvert_{V_i}\right)^{n_i}=\lambda_i \Id.\]
		If $\lambda_i=0$, then $A\rvert_{V_i}$ is nilpotent and thus we can find a basis for $V_i$ such that $A\rvert_{V_i}$ is of the form \eqref{eq. matrix-nilp}. If $\lambda_i\neq0$, then we know by Lemma \ref{lem. perm-basis} above that $A\rvert_{V_i}$ is diagonalizable and the eigenvalues of $A\rvert_{V_i}$ correspond to the $n_i^{th}$ roots of $\lambda_i.$ This proves the first implication.

		The converse implication follows immediately from Proposition \ref{prop. multiple of id-->NB}.
	\end{proof}
	\begin{remark}
		\begin{enumerate}[(1)]
			\item Note that, by using the notation from the proof above, we can write the characteristic polynomial of $A$ as
			\[\varphi(x)=(x^{n_1}-\lambda_1)\dots(x^{n_k}-\lambda_k).\] Note that we can always find a factorization in the above form for a given nice basis $\mathcal{B}.$
			\item If $M\in\C^{n\times n}$ we can conclude from the discussion above that $\C f\ltimes_M\C^n$ admits a nice basis if and only if the Jordan blocks of size greater then $1$ have zeros on the diagonal.
		\end{enumerate}
		\end{remark}
			Let $k\subseteq K$ be a field extension and let $\g^k$ be Lie algebras over $k$. By extending the scalars, we find a new Lie algebra $\g^K = \g^k\otimes_k K$. If $\g^k$ has a nice basis $\{X_1,\dots, X_n\}$, then $\{X_1\otimes 1,\dots,X_n\otimes 1\}$ is a nice basis for $\g^k\otimes_k K$. The question remains whether the existence of a nice basis for $\g^K$ implies one for $\g^k.$ We will answer this question in the negative by using \Cref{prop. total-nice-basis-semidirect}.
	\begin{example}
		Consider the matrix
		\[A=\begin{pmatrix}
			1&1\\
			-1&1
		\end{pmatrix}.\]
		Note that $\Spec(A)=\{1+i,1-i\}$. Since $(1+i)^2=2i=-(1-i)^2$ we find by \Cref{prop. total-nice-basis-semidirect} that $\R f\ltimes_A\R^n$ does not admit a nice basis. However, note that
		\[\C f\ltimes_A \C^n\cong (\R f\ltimes_A\R^n)\otimes_\R \C\]
		and since $A$ is diagonalizable (over $\C$) it follows immediately that $\C f\ltimes_A \C^n$ admits a nice basis.
	\end{example}
	\begin{corollary}
	The existence of a nice basis on a Lie algebra is not preserved under field extensions. 
	\end{corollary}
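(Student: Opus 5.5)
The statement follows from the preceding example, so the plan is simply to exhibit one Lie algebra over a field $k$ that has no nice basis while its scalar extension to some $K\supseteq k$ does. The natural choice is the almost abelian Lie algebra $\R f\ltimes_A\R^2$ with
\[A=\begin{pmatrix}1&1\\-1&1\end{pmatrix},\]
taking $k=\R$ and $K=\C$.

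\textbf{Non-existence over $\R$.} I will invoke \Cref{prop. total-nice-basis-semidirect}. Since $\Spec(A)=\{1+i,1-i\}$ has two distinct nonzero eigenvalues, $A$ has no Jordan blocks of size greater than one. It therefore suffices to check that the multiset $\{1+i,1-i\}$ cannot be written as a union of sets $S^{n_i}_{\lambda_i}$ with $\lambda_i\in\R_0$.

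Each $S^{n}_{\lambda}$ has exactly $n$ elements, so the only possible decompositions are:
\begin{enumerate}[(1)]
\item two singletons $S^1_{\lambda}=\{\lambda\}$, which would force both eigenvalues to be real, a contradiction;
\item one set $S^2_\lambda$, which would require $(1+i)^2=(1-i)^2=\lambda$.
\end{enumerate}
In the second case $(1+i)^2=2i$ and $(1-i)^2=-2i$. These are unequal, and neither is real, so this decomposition is also impossible. Hence $\R f\ltimes_A\R^2$ admits no nice basis.

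\textbf{Existence over $\C$.} The complexification is $(\R f\ltimes_A\R^2)\otimes_\R\C\cong \C f\ltimes_A\C^2$. Over $\C$ the matrix $A$ has distinct eigenvalues, so it is diagonalizable. Let $v_1,v_2$ be eigenvectors. Then $\{f,v_1,v_2\}$ is a nice basis, because $[f,v_j]=\lambda_j v_j$ and all other brackets vanish. This is also covered by the complex version of the remark following \Cref{prop. total-nice-basis-semidirect}.

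There is no real obstacle here. The only care needed is to enumerate the cardinality cases of the decomposition exhaustively, and to note that each $\lambda_i$ must be real.
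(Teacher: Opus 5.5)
Your proposal is correct and is essentially the paper's own argument. It uses the same matrix $A$, proves non-existence over $\R$ via \Cref{prop. total-nice-basis-semidirect}, and gets existence for the complexification $\C f\ltimes_A\C^2$ from the diagonalizability of $A$ over $\C$; your explicit enumeration of the two possible decompositions of $\{1+i,1-i\}$ just spells out the paper's one-line observation $(1+i)^2=2i=-(1-i)^2$, and you correctly write $\R^2$ where the paper's example has the typo $\R^n$.
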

	
	In the following example we illustrate \Cref{prop. total-nice-basis-semidirect}.
	\begin{example}
		Consider the matrix
		\[A=\begin{pmatrix}
			-2&3\\
			-4&-2
		\end{pmatrix}.\]
		Its eigenvalues are $-2+2\sqrt{3}i$ and $-2-2\sqrt{3}i$. Note that  
		\[A^2=\begin{pmatrix}
			-8&-12\\
			16&-8
		\end{pmatrix}\]
		with eigenvalues $-8+8\sqrt{3}i$ and $-8-8\sqrt{3}i$. If $\R f\ltimes_A\R^2$ admits a nice basis it should hold that $A$ or $A^2$ is diagonalizable over $\R$ which is not the case. We thus find that $\R f\ltimes_A\R^2$ does not admit a nice basis. 
		
		Now consider 
		\[B=\begin{pmatrix}
			-2&3&0\\
			-4&-2&0\\
			0&0&4
		\end{pmatrix},\]
		The eigenvalues of $B$ are $4,-2+2\sqrt{3}i$ and $-2-2\sqrt{3}i$. It is easy to check that these eigenvalues are the three third roots of $64.$ We thus find from \Cref{prop. total-nice-basis-semidirect} that $\R f\ltimes_B\R^3$ admits a nice basis. Indeed, we can construct this basis explicitly by using the method used in the proof of \Cref{prop. multiple of id-->NB}. The eigenvectors are given by
		\[w_1=\begin{pmatrix}
			\sqrt{3}/2 i\\
			1\\
			0
		\end{pmatrix},\quad w_2=\begin{pmatrix}
			-\sqrt{3}/2 i\\
			1\\
			0
		\end{pmatrix},\quad \quad w_3=\begin{pmatrix}
			0\\
			0\\
			1
		\end{pmatrix}.\]
		We now find that
		\[X_1=w_1+w_2+w_3=\begin{pmatrix}
			0\\2\\1
		\end{pmatrix},\quad X_2=BX_1=\begin{pmatrix}
			6\\-4\\4
		\end{pmatrix},\quad X_3=BX_2=B^2X_1=\begin{pmatrix}
			-24\\-16\\16
		\end{pmatrix},\]
		forms a basis for $\R^3$. As $B^3= 2^6\Id$, it follows that $\{f,X_1,X_2,X_3\}$ forms a nice basis for  $\R f\ltimes_B\R^3.$
	\end{example}
	\subsection{Number of nice bases}
	Note that \Cref{prop. total-nice-basis-semidirect} does not tell us how much nice bases for the given Lie algebra there are. We will first consider the case where $\R f\ltimes_A\R^n$ is nilpotent. We start by showing that \Cref{cor. form of nice basis RfxR^n} holds under slightly weaker conditions on the map $A\in\End(\R^n).$
	
	\begin{lemma} \label[lemma]{lem. nonzero-elt-col-row-nilp.}
		Suppose $A\in\End(\R^n)$ is nilpotent and $\dim(\im(A))\geq 2$, then it holds that every nice basis of $\R f\ltimes_A\R^n$ is equivalent to \[\{f,X_1,\dots,X_n\},\]
		where the matrix of $A$ with respect to $\{X_1,\dots,X_n\}$ has at most one nonzero entry in each column and each row.
	\end{lemma}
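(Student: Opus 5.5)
The plan is to follow the proof of \Cref{prop. nonzero-elt-col-row}. The one new ingredient is a different argument for why only one nice basis element can have a nonzero $f$-component. In \Cref{prop. nonzero-elt-col-row} that step used an element of $\im(A)\setminus\ker(A)$, which need not exist here, for instance when $A^2=0$. Here I will use the hypothesis $\dim(\im(A))\geq 2$ instead.

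Let $\g=\R f\ltimes_A\R^n$ and let $\mathcal{B}=\{\alpha_1f+X_1,\dots,\alpha_{n+1}f+X_{n+1}\}$ be a nice basis. Write $E_1,\dots,E_r$ for the elements with $\alpha_i\neq 0$, where $r\geq 1$ since $\mathcal{B}$ spans $\g$. Write $Y_1,\dots,Y_{n+1-r}$ for the remaining elements, which lie in $\R^n$.

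\emph{Step 1: every $Y_j$ lies in $\ker(A)$ when $r\geq 2$.} We have $[E_k,Y_j]=\alpha_kAY_j$ for each $k$. If $AY_j\neq 0$, then $[E_1,Y_j]$ and $[E_2,Y_j]$ are both nonzero multiples of the same basis element. The pairs $\{E_1,Y_j\}$ and $\{E_2,Y_j\}$ are distinct but not disjoint, contradicting condition (2).

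\emph{Step 2: $r=1$.} Suppose $r\geq 2$ and put $W_k:=E_k-\frac{\alpha_k}{\alpha_1}E_1\in\R^n$ for $k\geq 2$. The vectors $Y_j$ and $W_k$ together span $\R^n$, and $A$ kills the $Y_j$ by Step 1, so $\im(A)=\Span\{AW_2,\dots,AW_r\}$. Since $\dim(\im(A))\geq 2$, we get $r\geq 3$ and there are $k\neq l$ with $AW_k,AW_l$ linearly independent. We compute
\[[E_1,E_k]=\alpha_1AW_k,\qquad [E_1,E_l]=\alpha_1AW_l,\qquad [E_k,E_l]=A(\alpha_kX_l-\alpha_lX_k)=\alpha_kAW_l-\alpha_lAW_k,\]
where the last equality uses that $\alpha_kX_l-\alpha_lX_k=\alpha_kW_l-\alpha_lW_k$.
\begin{itemize}
\item By condition (1), $AW_k$ and $AW_l$ are nonzero multiples of basis elements $Z_k$ and $Z_l$.
\item These basis elements are distinct, $Z_k\neq Z_l$, because $AW_k$ and $AW_l$ are independent.
\item Hence $[E_k,E_l]$ has nonzero coefficients on two different basis elements, since $\alpha_k,\alpha_l\neq0$.
\end{itemize}
This contradicts condition (1), so $r=1$. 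I expect this step to be the main obstacle.

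\emph{Step 3: reduce to $f$ and read off $A$.} Let $E=\alpha f+X$ be the unique element with nonzero $f$-component. The linear map $\varphi$ that fixes $\R^n$ pointwise and sends $f\mapsto f+\frac1\alpha X$ is a Lie algebra automorphism, because $[f+\frac1\alpha X,Y]=AY$ for all $Y\in\R^n$ and $\R^n$ is abelian. Applying $\varphi^{-1}$ shows that $\mathcal{B}$ is equivalent to $\{f,Y_1,\dots,Y_n\}$ with the $Y_j$ a basis of $\R^n$. Now $[f,Y_i]=AY_i$, so condition (1) says each $AY_i$ is zero or a multiple of a single $Y_k$. Condition (2), applied to the pairs $\{f,Y_i\}$ and $\{f,Y_j\}$, forbids $AY_i$ and $AY_j$ with $i\neq j$ from hitting the same $Y_k$. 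So the matrix of $A$ in the basis $\{Y_1,\dots,Y_n\}$ has at most one nonzero entry in each row and each column, as claimed.
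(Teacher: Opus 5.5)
Your proof is correct, and it takes a somewhat different route from the paper's. The paper splits on the nilpotency index. If $A^2\neq 0$, it invokes the corollary to \Cref{prop. nonzero-elt-col-row}, whose proof uses \Cref{lem. UCS/LCS-adapted} to find a basis vector in $\im(A)\setminus\ker(A)$. If $A^2=0$, it applies \Cref{lem. UCS/LCS-adapted} to the center $Z(\g)=\ker(A)$ to get $t=1+\dim(\im(A))\geq 3$ non-central basis vectors. It then uses the identity $\alpha_i[X_1,X_2]=\alpha_2[X_1,X_i]-\alpha_1[X_2,X_i]$ together with condition (2) to kill all but one $f$-component. Your Step 2 rests on the same linear identity, in the form $\alpha_1[E_k,E_l]=\alpha_k[E_1,E_l]-\alpha_l[E_1,E_k]$. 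You exploit it through condition (1) instead, once Step 1 and $\dim(\im(A))\geq 2$ have produced the two independent vectors $AW_k,AW_l$. What you gain is a uniform, self-contained argument: there is no case distinction and no appeal to adaptation to the central series. Nilpotency is never used either, so your Steps 1--3 give the conclusion for every $A$ with $\dim(\im(A))\geq 2$. Your Step 3 also writes down the shear automorphism $f\mapsto f+\frac{1}{\alpha}X$, which the paper leaves implicit when it says to argue as in \Cref{prop. nonzero-elt-col-row}. The paper's version, in turn, is shorter given the earlier corollary and stays within the structural viewpoint used throughout the paper. One half-sentence is worth adding in Step 3: $\{f,Y_1,\dots,Y_n\}$ is again nice, since it is the image of $\mathcal{B}$ under an automorphism up to scalars. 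Alternatively, apply (1) and (2) directly to $[E,Y_i]=\alpha AY_i$ inside $\mathcal{B}$.
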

	\begin{proof}
		If $A^k=0$ but $A^{k-1}\neq 0$ for $k\geq 3$ the result follows from \Cref{cor. form of nice basis RfxR^n}, so assume that $A^2=0$ and $\{X_1,\dots,X_{n+1}\}$ is a nice basis for $\R f\ltimes_A\R^n.$ After reordering the basis elements, we can take	\[\ker(A)=Z(\R f\ltimes_A\R^n)=\Span\{X_{t+1},\dots,X_{n+1}\}.\]
		Note that $t\geq 3$ because $\dim(\im(A))\geq 2$. and write for all $i\in\{1,\dots, t\}$
		\[X_i=\alpha_i f+Y_i,\]
		for $\alpha_i\in\R$ and $Y_i\in\R^n.$ 
		
		Assume without loss of generality that $\alpha_1\neq 0$ and $[X_1,X_2]\neq 0.$ Now pick an $i\in\{3,\dots,t\}$ and suppose $\alpha_i\neq 0$. Then note that
		\[[X_1,X_2]=A(\alpha_1Y_2-\alpha_2Y_1),\quad[X_1,X_i]=A(\alpha_1Y_i-\alpha_iY_1),\quad [X_2,X_i]=A(\alpha_2Y_i-\alpha_iY_2).\]
		So we find that
		\[\alpha_i[X_1,X_2]=\alpha_2[X_1,X_i]-\alpha_1[X_2,X_i].\]
		Since $X_1,X_2$ and $X_i$ are nice basis elements and $[X_1,X_2]\neq 0$, the above equality implies that $\alpha_i = 0$ and thus $Y_i\notin Z(\R f\ltimes_A\R^n)=\ker(A)$ for all $i\in\{3,\dots,t\}.$ From this it follows that $\alpha_2=0$ since otherwise
		\[[X_1,X_i]=\frac{\alpha_1}{\alpha_2}[X_2,X_i]\]
		for arbitrary $i\in\{3,\dots,t\}$, which again leads to a contradiction with the assumption of a nice basis and the fact that $AY_i\neq 0$ and $\alpha_i=0$. We thus find that $\alpha_2=0$ and  $Y_2\notin\ker(A)$. We can now argue in the same way as in the proof of \Cref{prop. nonzero-elt-col-row} to obtain the desired result.
	\end{proof}
	We can now prove the following.
	\begin{proposition}
		If $\g=\R f\ltimes_A \R^n$ is nilpotent it has exactly one nice basis up to equivalence. 
	\end{proposition}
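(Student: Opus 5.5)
We split according to $\dim(\im(A))$. If $A=0$, then $\g$ is abelian and every basis is nice. Any two bases are related by a linear automorphism, which is a Lie algebra automorphism, so $\nu_\g=1$.

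If $\dim(\im(A))=1$, then $A^2=0$, because $A$ is nilpotent of rank one. So $\g\cong \h_3\oplus\R^{n-2}$: choose $Y$ with $AY\neq 0$, put $Z=AY$, and take a complement of $\Span\{Z\}$ inside $\ker(A)$. By \Cref{thm. |g|=|g+Rn|} we get $\nu_\g=\nu_{\h_3}=\nu_{L_3}=1$, using the theorem stating $\nu_{L_n}=1$ for $n\geq 3$.

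The main case is $\dim(\im(A))\geq 2$. By \Cref{lem. nonzero-elt-col-row-nilp.}, every nice basis is equivalent to one of the form $\{f,X_1,\dots,X_n\}$, where $AX_i=\alpha_iX_{\sigma(i)}$ for a partial injection $\sigma$ (at most one nonzero entry in each row and column). We reduce every such basis to Jordan form via an automorphism. Consider the directed graph on $\{1,\dots,n\}$ with an edge $i\to\sigma(i)$ whenever $\alpha_i\neq 0$. Each vertex has in- and out-degree at most one. Since $A$ is nilpotent there are no cycles: a cycle would give a subspace $V$ with $(A|_V)^m=\lambda\Id$ and $\lambda\neq 0$. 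Hence the graph is a disjoint union of directed paths $X_{i_1}\to X_{i_2}\to\dots\to X_{i_r}$.

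Rescale along each path: put $X'_{i_1}=X_{i_1}$ and $X'_{i_{s+1}}=AX'_{i_s}$. Each $X'$ is a nonzero multiple of the corresponding $X$, and the matrix of $A$ on $\{X'_i\}$ is exactly a block diagonal matrix of nilpotent Jordan blocks as in \eqref{eq. matrix-nilp}. The block sizes are the path lengths, and these are determined by $A$ up to reordering, since the Jordan type is a similarity invariant.

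Now take two such bases $\{f,X'_1,\dots,X'_n\}$ and $\{f,W_1,\dots,W_n\}$ in this normal form. Match their paths by length and define $\varphi$ by $\varphi(f)=f$ and $\varphi(X'_{i})=W_{j}$ along matched paths. Then $\varphi|_{\R^n}$ commutes with $A$, and a linear map $\varphi$ with $\varphi(f)=f$ and $\varphi A=A\varphi$ on $\R^n$ is a Lie algebra automorphism of $\R f\ltimes_A\R^n$. So all nice bases are equivalent.

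One step needs care: equivalence to the normal form is by rescaling only, which is allowed because equivalence permits multiples. We must also check that the identity automorphism composed with the automorphism produced by \Cref{lem. nonzero-elt-col-row-nilp.} gives a valid chain. This is immediate, since $\sim$ is transitive.

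The main obstacle is the small-rank case. There $f$ need not belong to the nice basis, which is exactly why we route it through the direct sum theorem rather than the matrix argument.
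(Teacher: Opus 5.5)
Your proof is correct and follows essentially the paper's argument. You use the same three-way split: $A=0$; rank one via $\h_3\oplus\R^{n-2}$ and \Cref{thm. |g|=|g+Rn|}; and rank at least two via \Cref{lem. nonzero-elt-col-row-nilp.} together with the uniqueness of the nilpotent Jordan type. Your path decomposition of the partial injection and the explicit automorphism with $\varphi(f)=f$ commuting with $A$ spell out the step the paper leaves as ``can be checked explicitly''. They also avoid the paper's looser bookkeeping with cycles of a full permutation.
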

	\begin{proof}
		If $A=0$, the statement is clear. If $\dim(\im(A))=1$ it follows that the Lie algebra is isomorphic to $\h_3\oplus\R^{n-2}$ which has exactly one nice basis by \Cref{thm. |g|=|g+Rn|}.

		Now suppose that $A$ is nilpotent and $\dim(\im(A))>1.$ If $\mathcal{B}$ is a nice basis for $\R f\ltimes_A\R^n$ we know by \Cref{lem. nonzero-elt-col-row-nilp.} that it is of the form $\{f,X_1,\dots,X_n\}$ such that
		with respect to $\{X_1,\dots,X_n\}$ the matrix of $A$ has at most one nonzero entry in each column and each row. We therefore find a permutation $\sigma\in\mathcal{S}_n$ such that for all $i\in\{1,\dots,n\}$
		\[
		[f,X_i]=AX_i=\alpha_i X_{\sigma(i)}.
		\]
		Note that we can write $\sigma$ as the composition of disjoint cycles $\tau_1,\dots,\tau_k$ where $k=\dim(\ker(A))$ such that (after possibly reordering) 
		\[\tau_1=(12\dots n_1),\quad \tau_1=(n_1+1\dots n_1+n_2),\dots, \tau_k=(n_1+\dots+n_{k-1}+1\dots n_1+\dots+n_k).\]
		Now define inductively $N_0=0$ and $N_i=N_{i-1}+n_i$. If we let $V_i:=\Span\{X_{N_{i-1}+1},\dots,X_{N_i}\}$ for $1\leq i\leq k,$ then it must hold that  
		\[(A\rvert_{V_i})^{n_i}=0\]
		for all $1\leq i\leq k.$ Indeed, note that
		\[(A\rvert_{V_i})^{n_i}X_{N_{i-1}+1}=A^{n_i}X_{N_{i-1}+1}\in\Span\{X_{N_{i-1}+1}\}.\]
		This is only possible if $A^{n_i}X_{N_{i-1}+1}=0$, so $A^{n_i-1}X_{N_{i-1}+1}=X_{N_i}\in\ker(A)$. We can thus conclude that $\ker(A)=\Span\{X_{N_1},\dots, X_{N_k}\}$. By uniqueness of the Jordan block decomposition, the numbers $k$ and $n_i$ are uniquely determined, so they are the same for all nice bases.

	We thus find that if there is another nice basis $\{f,Y_1,\dots,Y_n\}$ such that 
	\[[f,Y_i]=\beta_iY_{\tilde{\sigma}(i)},\]
	for some $\tilde{\sigma}\in\mathcal{S}_n,$ then we can now reorder $\{Y_1,\dots,Y_n\}$ such that $\tilde{\sigma}=\tau_1\tau_2\dots\tau_k.$ It can now be checked explicitly that this basis is equivalent to $\{f,X_1,\dots,X_n\}.$
	\end{proof}
	For the general case where $A$ is any linear map, note that we know from \Cref{rem. char-poly} that the characteristic polynomial of $A$ is of the form 
	\begin{equation}\label{eq. fact}(x^{n_1}-\lambda_1)\dots(x^{n_k}-\lambda_k).\end{equation}
	It is of course possible that there are multiple ways to write the characteristic polynomial as a product of factors of the form $x^n-r$ for $r\in\R$ and $n\in\N_0.$ We will say that the factorization
	\[(x^{m_1}-\mu_1)\dots(x^{m_l}-\mu_l)\] is the same as \eqref{eq. fact} if $k=l$, and after possibly reordering $(n_1,\dots,n_k)=(m_1,\dots,m_l)$ and there exists an $\eta\in\R$ such that
	\[\eta=\left(\frac{\lambda_1}{\mu_1}\right)^{1/n_1}=\left(\frac{\lambda_2}{\mu_2}\right)^{1/n_2}=\dots=\left(\frac{\lambda_k}{\mu_k}\right)^{1/n_k}.\] Otherwise we will say that the factorizations are distinct.
	Using this terminology we have the following result.
	\begin{proposition}
		Suppose that $\g=\R f\ltimes_A \R^n$ admits a nice basis and zero is not an eigenvalue of $A$. Let $\varphi_A(x)$ be the characteristic polynomial of $A$. The number of distinct ways in which we can write
		$\varphi_A(x)$
		as a product of factors of the form $x^d-r$ for $d\in\N$ and $r\in\R_0$ is equal to the number of distinct nice bases there are for $\g$. 
	\end{proposition}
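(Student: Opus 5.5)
We will set up a map from nice bases up to equivalence to factorizations of $\varphi_A$ (up to the notion of ``same'' defined above) and show that it is well defined and bijective. Since $0\notin\Spec(A)$ and $\g$ has a nice basis, $A$ is invertible and $A^2\neq 0$ (the case $n=1$ is immediate). By \Cref{cor. form of nice basis RfxR^n}, every nice basis is equivalent to one of the form $\{f,X_1,\dots,X_n\}$ with $AX_i=\alpha_iX_{\sigma(i)}$, where all $\alpha_i\neq 0$ by invertibility. As in the proof of \Cref{prop. total-nice-basis-semidirect}, we decompose $\sigma$ into disjoint cycles of lengths $n_1,\dots,n_k$ with products $\lambda_i$. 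This gives $\varphi_A(x)=\prod_i(x^{n_i}-\lambda_i)$ by \Cref{lem. perm-basis}, and we assign this factorization to the basis.

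\textbf{Well-definedness.} First, rescaling the $X_i$ inside a cycle does not change $\lambda_i$. Second, any automorphism of $\g$ sends $f$ to $cf+X$ with $c\neq 0$ (it preserves $\R^n=[\g,\g]$) and acts on $\R^n$ by some $P$ with $PA=cAP$. Hence equivalent bases of this form give cycle structures of the same lengths, with products $\mu_i=c^{n_i}\lambda_i$. Taking $\eta=c^{-1}$ (for odd $n_i$ the real root is unambiguous, and we take the sign convention accordingly), the two factorizations are the ``same''. One should also check that replacing $f$ by $f+X$ in the basis only changes things up to equivalence, which follows from \Cref{cor. form of nice basis RfxR^n}.

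\textbf{Surjectivity.} Given a factorization $\prod(x^{n_i}-\lambda_i)$ of $\varphi_A$, we partition the eigenvalues (which are the roots of the factors, counted with multiplicity) accordingly. The multiplicities force each root set $S_{\lambda_i}^{n_i}$ to consist of distinct values. We must show that $\R^n$ splits into $A$-invariant real subspaces $V_i$ with $\Spec(A|_{V_i})=S^{n_i}_{\lambda_i}$ and $A|_{V_i}$ diagonalizable over $\C$. Then \Cref{prop. multiple of id-->NB} applied to each $V_i$ gives a cyclic basis and therefore a nice basis.

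\textbf{Injectivity.} Suppose two nice bases give the same factorization. After rescaling $f$ by $\eta$ we may assume $\lambda_i=\mu_i$ and equal cycle lengths. We then build $P$ sending the cyclic basis of one $V_i$ to that of the other, normalized so that $PA=AP$. Normalizing each cycle to $AX_j=X_{j+1}$ and $AX_{\mathrm{last}}=\lambda_iX_{\mathrm{first}}$ by rescaling makes both matrices identical, so $P$ extends with $f\mapsto f$ to an automorphism mapping basis to basis.

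\textbf{Main obstacle.} The hardest step is surjectivity when $\varphi_A$ has repeated eigenvalues, or when eigenvalues are shared between different factors. Then $A$ must be semisimple, because every nice basis realizes it as a direct sum of blocks that are diagonalizable by \Cref{lem. perm-basis}, and the existing nice basis gives this. Using semisimplicity, we would choose eigenvectors (with conjugate pairs kept together) and distribute each eigenspace among the factors according to multiplicity. Each $V_i$ is then the real span of a conjugation-closed set of eigenvectors, one for each root in $S^{n_i}_{\lambda_i}$. This works because the multiset $S^{n_i}_{\lambda_i}$ is closed under conjugation when $\lambda_i\in\R$.
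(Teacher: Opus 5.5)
Your proposal is correct and follows essentially the same route as the paper. You reduce to nice bases of the form $\{f,X_1,\dots,X_n\}$, read the factorization $\prod_i(x^{n_i}-\lambda_i)$ off the cycles of $\sigma$, and show that two such bases are equivalent exactly when the cycle types agree and $\lambda_i/\mu_i=\eta^{n_i}$ for a common $\eta$. Your explicit surjectivity step fills in a point the paper's proof leaves implicit, namely that every factorization is realized by some nice basis: you use that $A$ is semisimple, distribute conjugation-closed sets of eigenvectors among the factors, and then apply the cyclic-vector construction.
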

	\begin{proof}
		Suppose that $\g$ has a nice basis, then it is equivalent to one of the form $\{f,X_1,\dots,X_n\}$. We can find a permutation $\sigma\in\mathcal{S}_n$ such that
		\[AX_i=\alpha_i X_{\sigma(i)}.\]
		Note that we can write $\sigma$ as a composition of disjoint cycles $\tau_1,\dots,\tau_k$ such that (after possibly reordering) 
		\[\tau_1=(12\dots n_1),\quad \tau_1=(n_1+1\dots n_1+n_2),\dots, \tau_k=(n_1+\dots+n_{k-1}+1\dots n_1+\dots+n_k).\]
		Now define inductively $N_0=0$ and $N_i=N_{i-1}+n_i$ and let $V_i:=\Span\{X_{N_{i-1}+1},\dots,X_{N_i}\}$ for $1\leq i\leq k.$

		Suppose that we have another nice basis for $\g$, say $\{f,Y_1,\dots,Y_n\}$ and let $\tilde{\sigma}$ be the associated permutation. More specifically we have that
		\[AY_i=\beta_iY_{\tilde{\sigma}(i)}.\]
		We can now write $\tilde{\sigma}$ as the composition of the disjoint cycles $\tilde{\tau}_1,\dots,\tilde{\tau}_l$. Let $\psi$ be an automorphism sending elements of $\{f,X_1,\dots,X_n\}$ to multiples of elements of $\{f,Y_1,\dots,Y_n\}$. Note that the numbers $k,l$ and the order of the permutations $\tau_i$ and $\tilde{\tau}_i$ can be read of from the adjoint of $f$ and $\psi(f)$ respectively. We can thus conclude that if the nice bases are equivalent, then after reordering we can assume that $\sigma = \tilde{\sigma}$. 

		It now follows that the characteristic polynomial of $A$ can be written in two ways, namely
		\[(x^{n_1}-\lambda_1)\dots(x^{n_k}-\lambda_k)=(x^{n_1}-\mu_1)\dots(x^{n_k}-\mu_k).\]
		Note that $\alpha_{N_{i-1}+1}\dots \alpha_{N_{i}}=\lambda_i$ and $\beta_{N_{i-1}+1}\dots \beta_{N_{i}}=\mu_i$. By definition there exists $\eta,\eta_1,\dots,\eta_n\in\R_0$ such that
		\[\psi(f)=\eta f\] and \[\psi(X_i)=\eta_i Y_i,\] after reordering the basis if necessary.
		We find from this that for fixed $1\leq i\leq k$
		\[\alpha_{N_{i-1}+t}\eta_{N_{i-1}+t+1}Y_{N_{i-1}+t+1}=\psi(\alpha_{N_{i-1}+t}X_{N_{i-1}+t+1})=\psi([f,X_{N_{i-1}+t}])=\eta\eta_{N_{i-1}+t}\beta_{N_{i-1}+t}Y_{N_{i-1}+t+1}\]
		for all $1\leq t \leq n_i-1$ and
		\[\alpha_{N_i}\eta_{N_{i-1}+1}Y_{N_{i-1}+1}=\eta\eta_{N_i}\beta_{N_i}Y_{N_{i-1}+1}.\]
		We thus find that for all $1\leq t \leq n_i-1$
		\[\eta=\frac{\alpha_{N_{i-1}+t}}{\beta_{N_{i-1}+t}}\frac{\eta_{N_{i-1}+t+1}}{\eta_{N_{i-1}+t}},\qquad \eta= \frac{\alpha_{N_i}}{\beta_{N_i}}\frac{\eta_{N_{i-1}+1}}{\eta_{N_i}}\]
		and hence
		\[\eta^{n_i}=\frac{\alpha_{N_i}}{\beta_{N_i}}\frac{\eta_{N_{i-1}+1}}{\eta_{N_i}}\cdot\prod_{i=1}^{n_i-1}\frac{\alpha_{N_{i-1}+t}}{\beta_{N_{i-1}+t}}\frac{\eta_{N_{i-1}+t+1}}{\eta_{N_{i-1}+t}}=\frac{\lambda_i}{\mu_i}.\]
In particular, 
		\[\eta=\left(\frac{\lambda_1}{\mu_1}\right)^{1/n_1}=\left(\frac{\lambda_2}{\mu_2}\right)^{1/n_2}=\dots=\left(\frac{\lambda_k}{\mu_k}\right)^{1/n_k}.\]
		Note that if the above condition is satisfied we can construct a Lie algebra automorphism sending elements of $\{f,X_1,\dots,X_n\}$ to multiples of elements of $\{f,Y_1,\dots,Y_n\}$. This concludes the proof. 
	\end{proof}
	We can now summarize the results throughout this section in the following proposition.
	\begin{proposition}
		Suppose that $\g=\R f\ltimes_A \R^n$ admits a nice basis. Let $\varphi_A(x)$ be the characteristic polynomial of $A$. Let $N\in\N$ be the multiplicity of the root $0$ in $\varphi_A(x).$ The number of distinct ways in which we can write
		\[\frac{\varphi_A(x)}{x^N}\] 
		as a product of factors of the form $x^d-r$ for $d\in\N$ and $r\in\R_0$ is equal to the number of distinct nice bases there are for $\g$. 
	\end{proposition}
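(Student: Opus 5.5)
The plan is to split $\R^n$ into the generalized $0$-eigenspace $V_0=\ker(A^N)$ and the invertible part $V_1=\im(A^N)$, and to show that a nice basis handles the two pieces independently. The nilpotent piece should contribute exactly one choice, and the invertible piece should contribute the factorizations of $\varphi_A(x)/x^N$.

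\emph{Reduction to a standard form.} If $A$ is nilpotent, then $\varphi_A(x)/x^N=1$ has exactly one (empty) factorization, and the previous proposition on nilpotent $\R f\ltimes_A\R^n$ gives one nice basis. We may therefore assume $A$ is not nilpotent, so $A^2\neq 0$. By \Cref{cor. form of nice basis RfxR^n}, every nice basis is equivalent to one of the form $\{f,X_1,\dots,X_n\}$ with $AX_i=\alpha_iX_{\sigma(i)}$ for a permutation $\sigma$. As in the proof of \Cref{prop. total-nice-basis-semidirect}, we decompose $\sigma$ into cycles. Each cycle spans an $A$-invariant subspace $V_i$ with $(A|_{V_i})^{n_i}=\lambda_i\Id$.

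The cycles with $\lambda_i=0$ span a nilpotent invariant subspace, and those with $\lambda_i\neq 0$ span a subspace on which $A$ is invertible. Hence the first group spans exactly $V_0$ and the second spans $V_1$. Consequently, $\{X_j\}$ splits into a basis of $V_0$ adapted to $A|_{V_0}$ (a ``nice'' basis for $\R f\ltimes_{A|_{V_0}}V_0$) and a basis of $V_1$ of the same type. The product of $x^{n_i}-\lambda_i$ over the cycles with $\lambda_i\neq 0$ is then a factorization of $\varphi_A(x)/x^N$.

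\emph{Counting.} We must show that two such nice bases are equivalent if and only if (a) their $V_0$-parts are equivalent as nice bases of $\R f\ltimes_{A|_{V_0}}V_0$, and (b) their $V_1$-parts give the same factorization in the sense defined before the previous proposition. The same scalar $\eta$ (with $\psi(f)=\eta f+v$) must serve both parts. For the ``if'' direction, the nilpotent part is handled by the previous proposition. There, $\eta$ can be chosen freely, because rescaling the chain vectors by powers of $\eta$ absorbs it. We then construct $\psi$ on $V_1$ exactly as at the end of the previous proof, using the common $\eta$ coming from the factorization condition. The two pieces glue into an automorphism since $V_0$ and $V_1$ are $A$-invariant.

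For the ``only if'' direction, an automorphism $\psi$ maps $\R^n=[\g,\g]+\ker$-type ideals appropriately: since $A$ is not nilpotent, $\R^n$ is the unique abelian ideal of codimension one, or at least $\psi(\R^n)=\R^n$ can be arranged. Because $\psi$ sends basis vectors to multiples of basis vectors, $\psi(f)$ is a multiple of $f$, say $\eta f$. Then $\psi A=\eta A\psi$ on $\R^n$, so $\psi$ maps $V_0$ to $V_0$ and $V_1$ to $V_1$. On $V_1$ the computation of the previous proposition yields $\eta^{n_i}=\lambda_i/\mu_i$, so the factorizations agree. Conversely, every factorization of $\varphi_A/x^N$ is realized: by \Cref{prop. multiple of id-->NB} applied to each factor's eigenspace block, together with the nilpotent Jordan basis from \Cref{lem. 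A-nilpotent->NB} on $V_0$.

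\emph{Main obstacle.} The delicate step is justifying that $f$ (up to equivalence) maps to a multiple of $f$, i.e.\ that the nilpotent part cannot produce extra nice bases by mixing $f$ with vectors in $V_0$. This is exactly the situation where $\dim\im(A|_{V_0})\le 1$. I would first try to use that $A^2\neq 0$ forces the distinguished element of \Cref{cor. form of nice basis RfxR^n} to be unique. I would then check directly that any $\psi$ respecting the bases fixes $\R f$ modulo $\R^n$, and hence, by the basis-to-multiples condition, fixes $\R f$ itself.
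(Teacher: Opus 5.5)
Your proof is correct and follows the paper's route: you put a nice basis in the standard form $\{f,X_1,\dots,X_n\}$, let the cycles with $\lambda_i=0$ span $\ker(A^N)$ and the others span $\im(A^N)$, and reduce the count to the previous proposition on the invertible part, where you also supply what the paper only asserts (the same $\eta$ must serve both parts, the nilpotent chains accept any $\eta$, and every factorization is realized). The ``main obstacle'' you flag is already settled by your own steps: since $A^2\neq 0$ the standard form applies to all of $\g$ regardless of $\dim\im(A|_{V_0})$, and $\R^n=\nil(\g)$ is characteristic, so any automorphism relating two standard-form bases satisfies $\psi(f)\in\R f$.
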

	
	\begin{proof}
		We can suppose without loss of generality that $\varphi(x)\neq x^N$, so $A$ is not nilpotent. If $\g$ has a nice basis $\mathcal{B}$, then we know that $\R^n=V\oplus W$ where $A\rvert_V$ is nilpotent and $A\rvert_W$ is diagonalizable over $\C$ without zero as an eigenvalue.

		We can remove one element from $\mathcal{B}$ and obtain a basis $\{X_1,\dots,X_n\}$ for $\R^n$ with respect to which $A$ has at most one nonzero entry in each column and each row. Let $k:=\dim(\ker(A))$, then we can find a permutation $\sigma\in\mathcal{S}_n$ such that
		$AX_i=\alpha_iX_{\sigma(i)}$. Moreover we can assume without loss of generality that the disjoint cycle decomposition of $\sigma$ contains cycles $\tau_1,\dots,\tau_k$ of the form
		\[\tau_1=(12\dots n_1),\quad \tau_1=(n_1+1\dots n_1+n_2),\dots, \tau_k=(n_1+\dots+n_{k-1}+1\dots n_1+\dots+n_k)\]
		such that for all $1\leq i\leq k$ $AX_{N_i}=0$, where $N_0=0$ and $N_i=N_{i-1}+n_i$. If we now let $V_i:=\Span\{X_{N_{i-1}+1},\dots,X_{N_i}\}$ for $1\leq i\leq k,$ then it must hold that  
		\[(A\rvert_{V_i})^{n_i}=0\]
		for all $1\leq i\leq k.$ We can conclude from this that $\Span\{X_1,\dots,X_{N_k}\}=V,$ moreover it also holds that $\Span\{X_{N_k+1},\dots,X_n\}=W.$ Let $B=A\rvert_W$, it then holds that the number of nice bases for $\g$ is equal to the number of nice bases for $\h:=\R f\ltimes_B\R^m,$ where $m=\dim(W)$. Note that $\varphi_B(x)=\varphi_A(x)/x^N.$ This proves the proposition. 
	\end{proof}
	We work out some examples below.
	\begin{example}
		Consider the matrix $A\in\R^{4\times 4}$ defined by
		\[A=\begin{pmatrix}
			0&0&0&1\\
			1&0&0&0\\
			0&1&0&0\\
			0&0&1&0
		\end{pmatrix}.\]
		By \Cref{prop. total-nice-basis-semidirect} the Lie algebra $\R f\ltimes_A\R^4$ admits a nice basis. The distinct factorizations of the characteristic polynomial of $A$ are given by
		\begin{itemize}
			\item $(x-1)(x+1)(x^2+1),$
			\item $(x^2-1)(x^2+1),$
			\item $(x^4-1)$.
		\end{itemize}
		We thus find that the Lie algebra $\R f\ltimes_A\R^n$ has three nice bases up to equivalence. 
	\end{example}
	\begin{example}
		Consider the matrix $A\in\R^{4\times 4}$ given by
		\[A=\begin{pmatrix}
			1&0&0&0\\
			0&-1&0&0\\
			0&0&-2&0\\
			0&0&0&2
		\end{pmatrix}.\]
		It is clear that $\R f\ltimes_A\R^4$ has at least one nice basis. Note that the distinct factorizations of the characteristic polynomial are given by
		\begin{itemize}
			\item $(x-2)(x+2)(x-1)(x+1),$
			\item $(x^2-4)(x-1)(x+1),$
			\item $(x^2-1)(x-2)(x+2),$
			\item $(x^2-1)(x^2-4).$
		\end{itemize}
		Note that the second and third factorization are distinct since
		\[2=\sqrt{4}\neq\frac{1}{2}.\]
		We can thus conclude that $\R f\ltimes_A\R^4$ has 4 distinct nice bases.
	\end{example}
	We already know that the number of nice bases for a Lie algebra can be arbitrarily large. From the analysis in this section we also have the following result.
	\begin{corollary}
		For every $n\in\N$ there exists an indecomposable Lie algebra with exactly $n$ nice bases up to equivalence.
	\end{corollary}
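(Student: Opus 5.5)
We take the Lie algebra to be almost abelian and count its nice bases with the last proposition of this section. That proposition reduces the count to the number of distinct factorizations of the characteristic polynomial into factors $x^d-r$. So the plan has two parts: choose $A$ so that the factorizations are easy to count and number exactly $n$, and choose $A$ invertible so that $\R f\ltimes_A\R^m$ is indecomposable.

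\emph{Construction.} For $n=1$ take $A=(1)\in\End(\R)$. For $n\geq 2$ put $j:=n-1$ and
\[A=\Dg(\underbrace{1,\dots,1}_{j},\underbrace{-1,\dots,-1}_{j})\in\End(\R^{2j}),\qquad \g:=\R f\ltimes_A\R^{2j}.\]
Since $A$ is diagonal, $\g$ admits a nice basis. Its characteristic polynomial is $\varphi_A(x)=(x-1)^j(x+1)^j$, and $0$ is not a root.

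\emph{Counting factorizations.} Take any factorization of $\varphi_A$ into factors $x^d-r$ with $r\in\R_0$. The roots of $x^d-r$ are exactly the $d$-th roots of $r$, so each factor must have all its roots in $\{1,-1\}$, each root occurring once.
\begin{enumerate}[(1)]
\item If $d\geq 3$, the $d$ distinct roots of $x^d-r$ cannot all lie in $\{1,-1\}$, so such factors do not occur.
\item If $d=2$, the roots are real and distinct only when $r>0$, which forces $x^2-r=x^2-1$.
\item If $d=1$, the factor is $x-1$ or $x+1$.
\end{enumerate}
Hence every factorization has the form $(x^2-1)^q(x-1)^{j-q}(x+1)^{j-q}$ for some $q\in\{0,\dots,j\}$. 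For different values of $q$ the multisets of degrees differ, so by the paper's definition these $j+1=n$ factorizations are pairwise distinct. Any two factorizations with the same $q$ are literally equal (take $\eta=1$), so they count once. By the last proposition of this section, $\nu_\g=n$.

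\emph{Indecomposability.} Since $A$ is invertible, $[\g,\g]=\im(A)=\R^{2j}$, which has codimension $1$ in $\g$. Suppose $\g=\g_1\oplus\g_2$ with both ideals nonzero. Then $[\g,\g]=[\g_1,\g_1]\oplus[\g_2,\g_2]$, and since this has codimension $1$, one factor, say $\g_2$, satisfies $[\g_2,\g_2]=\g_2$. But $\g$ is solvable, so its ideal $\g_2$ is solvable, and a solvable perfect Lie algebra is zero. This contradicts $\g_2\neq 0$, so $\g$ is indecomposable; this holds in particular for $n=1$, where $\g=\R f\ltimes_{(1)}\R$.

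The main obstacle is making the count exact. Here the equivalence notion in terms of $\eta$ could in principle identify factorizations, but in this example different $q$ already give different degree patterns, so no such identification can occur. The remaining care is in the classification of admissible factors $x^d-r$ in steps (1)–(3) above.
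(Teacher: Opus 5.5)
Your argument is correct for every $n\geq 1$, but it uses a different example from the paper.

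\textbf{Comparison with the paper.} For $n\geq 2$ the paper takes the $2^{n-1}\times 2^{n-1}$ cyclic matrix \eqref{eq. matrix-nth roots} with $\lambda=1$. Its characteristic polynomial $x^{2^{n-1}}-1$ has simple roots. Its factorizations are indexed by the $k\in\{0,\dots,n-1\}$ for which $x^{2^k}-1$ is the factor containing the root $1$; all remaining factors are then forced to be $x^{2^l}+1$ with $k\leq l\leq n-2$. You take instead $A=\Dg(1,\dots,1,-1,\dots,-1)$ with $\varphi_A=(x^2-1)^{n-1}$, and index the factorizations by the number $q$ of factors $x^2-1$.

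Both arguments rest on the same counting proposition. In both, the $n$ factorizations are separated by their degree patterns, so the $\eta$-identification never plays a role.

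\textbf{What your choice buys.} Your Lie algebra has dimension $2n-1$ instead of $2^{n-1}+1$, and the classification of admissible factors is completely elementary. The count can even be checked by hand without the proposition. Write $j=n-1$. A nice basis $\{f,X_1,\dots,X_{2j}\}$ consists of $q$ pairs $\{X,AX\}$ and $2(j-q)$ eigenvectors. Two such bases with the same $q$ are matched by an $A$-equivariant map fixing $f$. The number $q$ is an invariant, because an equivalence must map $f$ into $\R f$.

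For $n=2$ your example recovers $\R f\ltimes_{A_{-1}}\R^2$ from the table in \Cref{sect. 3-dim}. You also give an actual proof of indecomposability (codimension-one derived algebra plus solvability), which the paper only calls clear; your argument applies verbatim to the paper's example as well.

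The paper's example has the small advantage of a simple spectrum. Its factorizations are just partitions of a set of distinct roots, so no multiplicities need to be tracked.

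\textbf{Missing case $n=0$.} With the paper's conventions $\N$ contains $0$, and its proof explicitly mentions $\nu_\g=0$; you start at $n=1$. Your own method fills this. Take $\R f\ltimes_D\R^2$ with $D$ the $2\times 2$ Jordan block for the eigenvalue $1$. It has no nice basis by \Cref{prop. total-nice-basis-semidirect}. Since $D$ is invertible, it is indecomposable by your codimension-one argument.
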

	\begin{proof}
		We already know that there exist Lie algebras $\g$ for which $\nu_\g$ equals zero or one. Now pick $n\in\N_{\geq 2}$ and let $A_n$ be the $2^{n-1}\times 2^{n-1}$ matrix in \eqref{eq. matrix-nth roots} for $\lambda=1$. The characteristic polynomial of $A_n$ is given by
		\[x^{2^{n-1}}-1.\]
		This polynomial has exactly $n$ distinct factorizations, so we can conclude that the Lie algebra
		\[\R f\ltimes_{A_n}\R^{2^{n-1}}\]
		has exactly $n$ nice bases up to equivalence. Moreover, the Lie algebra $\g$ is clearly indecomposable.
	\end{proof}
	\section{Application: nice bases for 3-dimensional Lie algebras}\label{sect. 3-dim}
		In this section we will apply previously obtained results to see which $3$-dimensional Lie algebras admit a nice basis and, if so, how many. In \cite{DeGraaf} a classification for all $3$-dimensional solvable Lie algebras is provided, it follows from this classification that all these Lie algebras are almost abelian. 
		
		The following result tells us when two almost abelian Lie algebras are isomorphic.
		\begin{lemma}
			The Lie algebras $\g:=\R f\ltimes_A\R^n$ and $\h:=\R g\ltimes_B\R^n$ are isomorphic if and only if there exists an invertible matrix $Q\in\R^{n\times n}$ and an element $c\in\R_0$ such that
			\[A=cQ^{-1}BQ.\]
		\end{lemma}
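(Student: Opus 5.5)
The plan is to prove the two implications separately. The ``if'' direction is a direct construction. The ``only if'' direction splits according to whether the codimension-one abelian ideal $\R^n$ is canonical.

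For the ``if'' direction, suppose $A=cQ^{-1}BQ$ with $c\in\R_0$ and $Q$ invertible. Define the linear map $\varphi:\g\to\h$ by $\varphi(f)=cg$ and $\varphi(X)=QX$ for $X\in\R^n$; it is bijective because $c\neq0$ and $Q$ is invertible. Brackets inside $\R^n$ vanish on both sides. For the remaining brackets,
\[[\varphi(f),\varphi(X)]=cBQX=QAX=\varphi([f,X]),\]
so $\varphi$ is a Lie algebra isomorphism.

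For the ``only if'' direction, let $\varphi:\g\to\h$ be an isomorphism.

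\emph{Main case.} Suppose $\varphi(\R^n)=\R^n$. Write $Q:=\varphi\rvert_{\R^n}$ and $\varphi(f)=cg+Y$ with $Y\in\R^n$; then $c\neq0$, since otherwise $\varphi(\g)\subseteq\R^n$. Because $[Y,\R^n]=0$, for every $X\in\R^n$ we get
\[QAX=\varphi([f,X])=[cg+Y,QX]=cBQX,\]
which gives $A=cQ^{-1}BQ$.

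\emph{Reduction to the main case.} $\varphi(\R^n)$ is a codimension-one abelian ideal of $\h$. So it suffices to treat Lie algebras having two distinct codimension-one abelian ideals $I\neq J$; I expect this degenerate case to be the main obstacle. In that situation $I+J=\g$, so $I\cap J$ has codimension $2$. Moreover $I\cap J$ is central, since it commutes with both $I$ and $J$. Choose $x\in I\setminus J$ and $y\in J\setminus I$. Then $[\g,\g]$ is spanned by $[x,y]$, so $\dim[\g,\g]\le1$, i.e.\ $\operatorname{rank}(A)\le1$. Since $\varphi(\R^n)$ is a codimension-one abelian ideal of $\h$, the same bound applies on the $\h$ side, so $\operatorname{rank}(B)\le1$ as well.

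\emph{Rank at most one.} This case can be settled by classifying $A$ directly. If $A=0$, then $\dim[\g,\g]=0$ forces $B=0$, and any $Q$ works. If $\operatorname{rank}(A)=1$, write $A=uv^T$. Its conjugacy class is determined by the trace $v^Tu$, since a rank-one matrix is nilpotent if its trace is zero and diagonalizable otherwise.
\begin{itemize}
\item After rescaling by $c$, every rank-one $A$ is conjugate either to the standard nilpotent rank-one matrix or to a rank-one projection of trace $1$.
\item These two cases are distinguished by nilpotency of the Lie algebra, which is an isomorphism invariant, and $\operatorname{rank}(A)=\dim[\g,\g]$ is invariant too.
\end{itemize}
Hence if $\g\cong\h$ with $\operatorname{rank}(A)=1$, then $A$ and $B$ are both rank one and of the same type. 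Consequently $A=cQ^{-1}BQ$ for suitable $c$ and $Q$.

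In all other cases the codimension-one abelian ideal is unique, so $\varphi(\R^n)=\R^n$ and the main case applies.
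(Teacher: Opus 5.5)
Your proof is correct, but your case split in the ``only if'' direction differs from the paper's. The paper splits on whether $A$ is nilpotent:
\begin{itemize}
\item If $A$ is not nilpotent, then $\R^n$ is the nilradical of both algebras. It is therefore preserved by any isomorphism, and the computation of your main case finishes the proof.
\item If $A$ is nilpotent, the paper uses $\dim\gamma_i(\g)=\operatorname{rank}(A^{i-1})$. So $A$ and $B$ have equal ranks of all powers, hence the same Jordan type, and they are conjugate with $c=1$.
\end{itemize}
You instead split on whether the codimension-one abelian ideal can move. Your observation that two distinct such ideals force $\dim[\g,\g]\le 1$ shows that $\varphi(\R^n)=\R^n$ automatically once $\operatorname{rank}A\ge 2$, whether or not $A$ is nilpotent. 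The leftover rank-$\le 1$ case is then settled by the elementary fact that a rank-one matrix is determined up to conjugacy by its trace.

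The paper's route is shorter, given two standard facts: the nilradical is characteristic, and nilpotent matrices are classified by the ranks of their powers. Yours avoids the Jordan-type argument entirely. It also gives the stronger statement that for $\operatorname{rank}A\ge 2$ every isomorphism maps $\R^n$ onto $\R^n$, so $Q$ and $c$ can be read off from $\varphi$ itself rather than coming from an abstract conjugacy.

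One minor wording point: the two distinct ideals live in $\h$ (namely $\R^n$ and $\varphi(\R^n)$). So the bound is first obtained for $\operatorname{rank}B$ and then transferred to $A$ via $\dim[\g,\g]=\dim[\h,\h]$. Your text states it in the reverse order, but the logic is unaffected.
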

		\begin{proof}
			First, assume that $A=cQ^{-1}BQ$ as in the proposition, then define
		\[\varphi:\R f\ltimes_A\R^n\rightarrow\R g\ltimes_B\R^n:\alpha f+X\mapsto c\alpha g+QX.\]
		This defines a Lie algebra isomorphism.	
		
		For the converse, first assume that $\g$ and $\h$ are isomorphic and nilpotent. It then follows that $A$ and $B$ are nilpotent matrices. Note that 
			\[\dim(\im(A^{i-1}))=\dim(\gamma_i(\g))=\dim(\gamma_i(\h))=\dim(\im(B^{i-1})),\]
			for all $i\in\N_{\geq 2}.$ This implies that $A$ and $B$ have the same Jordan blocks up to permutation, therefore $A$ and $B$ must be conjugate. 
			
			Now suppose that $\g$ and $\h$ are isomorphic and $A$ is not nilpotent, then it is easy to check that $B$ is not nilpotent as well and moreover
			\[\nil(\g)=\R^{n},\qquad \nil(\h)=\R^{n}.\]
			It thus follows that if $\varphi:\g\rightarrow \h$ is an isomorphism, it holds that the image of $\varphi\rvert_{\R^n}$ is equal to $\R^n.$ From this, we find that $\varphi(f)=c g+ Y$ for some $c\in\R_0$ and $Y\in\R^n$. Moreover, we can express $\varphi\rvert_{\R^n}$ as a matrix $Q$ with respect to the canonical basis. It now follows that for all $X\in\R^n$ 
			\[QAX=\varphi([f,X])=[c g+Y,QX]=c B QX,\]
			so the result follows.
		\end{proof}
		Note that for two matrices $A,B\in\R^{n\times n}$,
		\[A\sim B\iff \exists c\in\R_0, Q\in\R^{n\times n}: \det(Q)\neq 0\text{ and } A=cQ^{-1}BQ\]
		defines an equivalence relation on $\R^{n\times n}$. For the $2\times 2$ matrices we have the following representatives for the different equivalence classes.
		\[A_\lambda=\begin{pmatrix}
			1&0\\
			0&\lambda
		\end{pmatrix},\quad B=\begin{pmatrix}
			0&0\\
			0&0
		\end{pmatrix},\quad C=\begin{pmatrix}
			0&1\\
			0&0
		\end{pmatrix},\quad D=\begin{pmatrix}
			1&1\\
			0&1
		\end{pmatrix},\quad E_{\mu}=\begin{pmatrix}
			\mu&1\\
			-1&\mu\end{pmatrix},\]
		where $\lambda\in[-1,1]$ and $\mu\in\R^+$. From \cite{Khabirov} it follows that the remaining $3$-dimensional Lie algebras are the simple ones, namely $\mathfrak{sl}_2(\R)$ and $\mathfrak{so}_3(\R)$. For $\mathfrak{sl}_2(\R)$ we will use the basis
		\[e_1=\begin{pmatrix}
			1&0\\
			0&-1
		\end{pmatrix},\quad e_2=\begin{pmatrix}
			0&1\\
			0&0
		\end{pmatrix},\quad 
		e_3=\begin{pmatrix}
			0&0\\
			1&0
		\end{pmatrix}.\]
		For $\mathfrak{so}_3(\R)$ we will use the basis
		\[f_1=\begin{pmatrix}
			0&0&0\\
			0&0&1\\
			0&-1&0
		\end{pmatrix},\quad f_2=\begin{pmatrix}
			0&0&1\\
			0&0&0\\
			-1&0&0
		\end{pmatrix},\quad 
		f_3=\begin{pmatrix}
			0&1&0\\
			-1&0&0\\
			0&0&0
		\end{pmatrix}.\] In the table below we summarize the results that we will obtain in  this section. 
	 We will use the notation $v_1:=(1,0)^\top$ and $v_2=(0,1)^\top$.
		\begin{center}
			\renewcommand{\arraystretch}{1.3}
			\begin{tabular}{c|c|c}
				
				Lie algebra $\g$ & non-equivalent nice bases& $\nu_\g$ \\
				\hline
				\hline
				$\R^3\cong \R f\ltimes_{B}\R^2$&$\{f,v_1,v_2\}$&1 \\
				\hline
				$\h_3\cong\R f\ltimes_C\R^2$&$\{f,v_1,v_2\}$&$1$ \\
				\hline
				$\R f\ltimes_{A_{-1}}\R^2$&\makecell[c]{$\{f,v_1,v_2\}$\\ $\{f,v_1+v_2,v_1-v_2\}$}&2\\
				\hline
				$\R f\ltimes_{A_{\lambda}}\R^2\quad(\lambda\in]-1,1])$&$\{f,v_1,v_2\}$&1\\
				\hline
				$\R f\ltimes_{D}\R^2$&none&0\\
				\hline
				$\R f\ltimes_{E_0}\R^2$&$\{f,v_1,v_2\}$&1\\
				\hline
				$\R f\ltimes_{E_\mu}\R^2 \quad(\mu>0)$&none&0\\
				\hline
				$\mathfrak{sl}_2(\R)$&\makecell[c]{$\{e_1,e_2,e_3\}$\\ $\{e_1,e_2+e_3,e_2-e_3\}$}&2\\
				\hline
				$\mathfrak{so}_3(\R)$&$\{f_1,f_2,f_3\}$&1\\
			\end{tabular}
		\end{center}
 As $B$ and $C$ are nilpotent it immediately follows that $\R f\ltimes_{B}\R^2$ and $\R f\ltimes_{C}\R^2$ both have one nice basis up to equivalence.

	 Note that for all $\lambda\in\R$ we have that $\R f\ltimes_{A_\lambda}\R^2$ admits a nice basis. Note that the characteristic polynomials of $A_\lambda$ is given by
	\[\varphi_{A_\lambda}(x)=(x-1)(x-\lambda).\]
	We can thus conclude that $\R f\ltimes_{A_\lambda}\R^2$ has one nice basis if $\lambda\neq-1$ and if $\lambda=-1$ there are two nice bases up to equivalence.

	Note that
	\[D^2=\begin{pmatrix}
		1&2\\
		0&1
	\end{pmatrix}\]
	is not diagonalizable, so therefore $\R f\ltimes_{D}\R^2$ does not admit a nice basis.

	The eigenvalues of $E_\mu$ for all $\mu\in\R^+$ are given by $\mu+i$ and $\mu-i$. We thus find that $\R f\ltimes_{E_\mu}\R^n$ admits a nice basis if and only if
	\[(\mu+i)^2\in\R\iff (\mu^2-1)+2\mu i\in\R\iff \mu=0.\]
	Since
	\[\varphi_{E_0}(x)=x^2+1,\]
	we can conclude that $\R f\ltimes_{E_0}\R^n$ has one nice basis up to equivalence.

	We still need to discuss the Lie algebras $\mathfrak{sl}_2(\R)$ and $\mathfrak{so}_3(\R)$, which both admit a nice basis. Indeed, a nice basis for $\mathfrak{sl}_2(\R)$ is given by $\{e_1,e_2,e_3\}$ because
	\[ [e_1,e_2]=2e_2,\quad [e_1,e_3]=-2e_3,\quad[e_2,e_3]=e_1.\]
	 A nice basis for $\mathfrak{so}_3(\R)$ is given by $\{f_1,f_2,f_3\}$ because
	 \[[f_2,f_3]=f_1,\quad[f_3,f_1]=f_2,\quad[f_1,f_2]=f_3.\] We will determine the number of nice bases in the proposition below. Note that every semisimple Lie algebra is unimodular and every derivation of such a Lie algebra is inner. This implies that the pre-Einstein derivation for a semisimple Lie algebra is the zero map and therefore it does not provide any information about the existence or the number of nice bases. Consequently, we will use another technique to count the number of nice bases for the $3$-dimensional simple Lie algebras.
	\begin{proposition}
The Lie algebra $\mathfrak{sl}_2(\R)$ has two nice bases up to equivalence and $\mathfrak{so}_3(\R)$ has one nice basis up to equivalence.
	\end{proposition}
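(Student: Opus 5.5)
We will argue purely combinatorially with the structure constants, since (as noted above) the pre-Einstein derivation is zero here. Let $\g$ be $\mathfrak{sl}_2(\R)$ or $\mathfrak{so}_3(\R)$ and let $\{X_1,X_2,X_3\}$ be a nice basis. Since $\g$ is simple, $[\g,\g]=\g$, so the three brackets $[X_1,X_2],[X_1,X_3],[X_2,X_3]$ span $\g$. By condition (1) each of them is a nonzero multiple of a single basis vector, and therefore the assignment $\{i,j\}\mapsto k$ with $[X_i,X_j]\in\R_0 X_k$ is a bijection from the set of pairs onto $\{1,2,3\}$. (Condition (2) is then automatic, because all three pairs pairwise intersect.) The first step is to list these bijections up to relabelling of the indices. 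Each pair is sent either to its complementary index or to one of its own two indices.

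We will split into the following cases.
\begin{enumerate}[(I)]
\item Every pair goes to its complement. This gives the ``cyclic'' type $[X_2,X_3]=aX_1$, $[X_3,X_1]=bX_2$, $[X_1,X_2]=cX_3$ with $abc\neq 0$.
\item Exactly one pair goes to its complement, say $[X_2,X_3]=cX_1$. Bijectivity then forces $[X_1,X_2]=aX_2$ and $[X_1,X_3]=bX_3$. The Jacobi identity (equivalently unimodularity, $\Tr(\mathrm{ad}X_1)=0$) gives $b=-a$.
\item No pair goes to its complement. Then every bracket lands inside its own pair, in a cyclic or similar pattern such as $[X_1,X_2]=aX_1$, $[X_1,X_3]=bX_3$, $[X_2,X_3]=cX_2$.
\end{enumerate}
For case (III) we plan to show, by writing out the Jacobi identity for $X_1,X_2,X_3$, that some constant must vanish, contradicting bijectivity. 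Failing that, we will show that some $\mathrm{ad}X_i$ has nonzero trace, which contradicts unimodularity of a semisimple algebra. I expect this case exclusion to be the fiddliest step; the first thing to try is to check the trace of each $\mathrm{ad}X_i$ in each remaining pattern.

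Next we normalise each type by rescaling. In type (II), rescaling $X_1$ gives $a=2$, and rescaling $X_2$ gives $c=1$. The result is exactly $\{e_1,e_2,e_3\}$, so the algebra is $\mathfrak{sl}_2(\R)$. Note that $\mathrm{ad}X_1$ has the real nonzero eigenvalues $\pm 2$, which is impossible in $\mathfrak{so}_3(\R)$, where every $\mathrm{ad}$ is skew and has purely imaginary spectrum. In type (I), replacing $X_i$ by $t_iX_i$ multiplies $(a,b,c)$ by $(t_2t_3/t_1,\;t_1t_3/t_2,\;t_1t_2/t_3)$. Hence we can reach $|a|=|b|=|c|=1$, and then flip signs so that at most one constant is negative, or equivalently so that only the sign of $abc$ remains. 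If all signs are positive we get the relations of $\{f_1,f_2,f_3\}$ in $\mathfrak{so}_3(\R)$. Otherwise we get, up to scaling, the relations of $\{e_1,e_2+e_3,e_2-e_3\}$ in $\mathfrak{sl}_2(\R)$, namely $[e_1,e_2\pm e_3]=2(e_2\mp e_3)$ and $[e_2+e_3,e_2-e_3]=-2e_1$. The sign is determined by the isomorphism type, since the two normal forms have Killing forms of different signature. In each case the linear map sending one normalised basis to the other is a Lie algebra automorphism. Therefore all nice bases of the same type are equivalent.

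Finally, bases of different type are not equivalent. An automorphism sending basis vectors to multiples of basis vectors transports the combinatorial pattern $\{i,j\}\mapsto k$ up to a relabelling, and being of type (I) or (II) is invariant under relabelling. Thus $\mathfrak{sl}_2(\R)$ has exactly two classes, represented by $\{e_1,e_2,e_3\}$ and $\{e_1,e_2+e_3,e_2-e_3\}$. On the other hand $\mathfrak{so}_3(\R)$ has no type (II) basis, so it has exactly one class, represented by $\{f_1,f_2,f_3\}$.
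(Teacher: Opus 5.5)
Your proposal is correct and follows essentially the same route as the paper. Both encode a nice basis by a permutation $\sigma\in\mathcal{S}_3$ of bracket targets, exclude the $3$-cycles via the Jacobi identity, identify the transposition case with $\{e_1,e_2,e_3\}$, and split the identity case into $\mathfrak{so}_3(\R)$ and $\mathfrak{sl}_2(\R)$ by the signature of the Killing form; your explicit rescaling and your invariance-of-type argument simply fill in steps the paper calls easy to check, and the case (III) exclusion you deferred is immediate, since for your pattern the Jacobi identity reads $acX_1-bcX_2-abX_3=0$ (alternatively $\Tr(\mathrm{ad}X_1)=b\neq 0$).
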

\begin{proof}
	Note that any $3$-dimensional simple Lie algebra $\g$ is isomorphic to either $\mathfrak{sl}_2(\R)$ or to $\mathfrak{so}_3(\R).$ Moreover, the Killing form of $\mathfrak{so}_3(\R)$ is negative definite, whereas it is not for $\mathfrak{sl}_2(\R)$.
	
	Suppose $\g$ is a $3$-dimensional simple Lie algebra and let $\{X_1,X_2,X_3\}$ be a nice basis. It then holds that 
\[[X_2,X_3]=\alpha X_{\sigma(1)},\quad[X_3,X_1]=\beta X_{\sigma(2)},\quad [X_1,X_2]=\gamma X_{\sigma(3)}\]
for some $\sigma\in\mathcal{S}_3$ and $\alpha,\beta,\gamma\in\R_0.$ From the Jacobi identity it immediately follows that $\sigma$ does not have order $3$, as otherwise a linear combination of $X_1,X_2$ and $X_3$ with nonzero coefficients should equal zero, which is impossible. 

Therefore $\sigma$ has order $1$ or $2$, and after permuting the basis vectors we thus find two cases, namely corresponding to $\sigma = e$ and $\sigma = (12)$, i.e.~:
\begin{enumerate}[(1)]
	\item$[X_2,X_3]=\alpha X_{1},\quad[X_3,X_1]=\beta X_{2},\quad[X_1,X_2]=\gamma X_{3}$
	\item$[X_2,X_3]=\alpha X_{2},\quad[X_3,X_1]=\alpha X_{1},\quad[X_1,X_2]=\gamma X_{3}$
\end{enumerate}
Note that we can find in the last case a Lie algebra isomorphism sending vectors from $\{X_1,X_2,X_3\}$ to multiples of vectors from $\{e_1,e_2,e_3\}$ which is a nice basis for $\mathfrak{sl}_2(\R)$. We thus only need to consider the first case. For this, first note that the eigenvalues of Killing form are given by
\[-2\beta\gamma,\quad -2\alpha\gamma,\quad -2\alpha\beta.\]
From this we can conclude that the Lie algebra with basis $\{X_1,X_2,X_3\}$ and Lie bracket   
\[[X_2,X_3]=\alpha X_{1},\quad[X_3,X_1]=\beta X_{2},\quad[X_1,X_2]=\gamma X_{3}\]
is isomorphic to $\mathfrak{so}_3(\R)$ if and only if \begin{equation}\label{eq. cond. sgn}
\sgn(\alpha)=\sgn(\beta)=\sgn(\gamma).\end{equation}
If this is the case, we can find a Lie algebra isomorphism sending vectors from $\{X_1,X_2,X_3\}$ to multiples of vectors from $\{f_1,f_2,f_3\}.$ We can thus conclude from this that $\mathfrak{so}_3(\R)$ admits only one nice basis up to equivalence.

If condition \eqref{eq. cond. sgn} is not satisfied, we know that the Lie algebra must be isomorphic to $\mathfrak{sl}_2(\R)$ and it is easy to check that the bases $\{X_1,X_2,X_3\}$ and $\{e_1,e_2,e_3\}$ are not equivalent. It can be shown that any other basis $\{Y_1,Y_2,Y_3\}$ for which 
\[[Y_2,Y_3]=\alpha' Y_{1},\quad[Y_3,Y_1]=\beta' Y_{2},\quad[Y_1,Y_2]=\gamma' Y_{3}\]
is equivalent to $\{X_1,X_2,X_3\}$ if $\alpha',\beta'$ and $\gamma'$ do not satisfy condition \eqref{eq. cond. sgn}. We can thus conclude from this that $\mathfrak{sl}_2(\R)$ has two nice bases up to equivalence, more specifically two non-equivalent bases are given by
\[\{e_1,e_2,e_3\},\quad \{e_1,e_2+e_3,e_2-e_3\}.\]
\end{proof}
	\section{Nice bases for nilpotent Lie algebras associated to graphs}\label{sect. graphs}
	In this section we will study nilpotent Lie algebras associated to simple graphs $\G=(V,E)$, where $V$ denotes the set of vertices and $E$ the set of edges. 

	We associate a $c$-step nilpotent Lie algebra to such a graph in the following way. Consider the free $c$-step nilpotent Lie algebra on $V$, say $\n_c(V)$ and consider the ideal $I_\G$ generated by
	\[\{[v_1,v_2]\mid v_1,v_2\in V, \{v_1,v_2\}\notin E\}.\]
	The $c$-step nilpotent Lie algebra associated to the graph $\G$ is given by
	\[\n_{\G,c}=\frac{\n_c(V)}{I_\G}.\]
	We have the following universal property.
	\begin{theorem}[Universal property]
		Let $\G=(V,E)$ be a simple graph. For any $c$-step nilpotent Lie algebra $\g$ and any map $f:V\rightarrow\g$ such that $[f(v_1),f(v_2)]=0$ if $\{v_1,v_2\}\notin E,$ there exists a unique Lie algebra morphism $\varphi:\n_{\G,c}\rightarrow \g$ such that $\varphi\rvert_V=f.$
	\end{theorem}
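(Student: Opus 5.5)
The plan is to derive the statement from the universal property of the free $c$-step nilpotent Lie algebra $\n_c(V)$ together with the fact that $\g$ is $c$-step nilpotent, and then pass to the quotient by $I_\G$. Here ``$c$-step nilpotent'' is read as $\gamma_{c+1}(\g)=0$.

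\textbf{Existence.} First I would recall the universal property of $\n_c(V)$: any map $f:V\rightarrow\g$ into a Lie algebra with $\gamma_{c+1}(\g)=0$ extends uniquely to a Lie algebra morphism $\tilde\varphi:\n_c(V)\rightarrow\g$. This can be justified as follows. View $\n_c(V)$ as the quotient of the free Lie algebra $L(V)$ by $\gamma_{c+1}(L(V))$. The free Lie algebra gives a unique morphism $\hat\varphi:L(V)\rightarrow\g$ extending $f$. Since any morphism maps $\gamma_{c+1}(L(V))$ into $\gamma_{c+1}(\g)=0$, the map $\hat\varphi$ factors through $\n_c(V)$.

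Next I would show that $I_\G\subseteq\ker\tilde\varphi$. The kernel of a morphism is an ideal, and $I_\G$ is by definition the smallest ideal containing the generators $[v_1,v_2]$ with $\{v_1,v_2\}\notin E$. So it suffices to check these generators:
\[\tilde\varphi([v_1,v_2])=[f(v_1),f(v_2)]=0\quad\text{whenever }\{v_1,v_2\}\notin E,\]
which holds by hypothesis. Hence $\tilde\varphi$ induces a morphism $\varphi:\n_{\G,c}=\n_c(V)/I_\G\rightarrow\g$ with $\varphi\rvert_V=f$. Here $V$ is identified with its image in the quotient.

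\textbf{Uniqueness.} The images of $V$ generate $\n_c(V)$ as a Lie algebra, so they also generate the quotient $\n_{\G,c}$. Two morphisms that agree on a generating set agree on every iterated bracket of generators, and hence on the whole Lie algebra.

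The only delicate point is that $V$ actually embeds in $\n_{\G,c}$, so that the phrase $\varphi\rvert_V=f$ makes sense. I expect this to be the main obstacle, and I would handle it with a grading argument. The ideal $I_\G$ is generated by homogeneous elements of degree $2$, and the free nilpotent Lie algebra is graded, so $I_\G$ is a graded ideal contained in $\gamma_2(\n_c(V))$. Therefore the degree-one part $\Span(V)$ meets $I_\G$ trivially and injects into the quotient.
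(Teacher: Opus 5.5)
Your argument is correct. The paper states this universal property without proof, treating it as immediate from the definition $\n_{\G,c}=\n_c(V)/I_\G$, and your three steps are exactly the standard argument that fills this in: factor the free Lie algebra map through $\n_c(V)$ using $\gamma_{c+1}(\g)=0$, check that the generators of $I_\G$ map to zero, and get uniqueness because $V$ generates. The embedding of $V$ is not really an obstacle: the induced map already satisfies $\varphi(v+I_\G)=f(v)$, and injectivity also follows afterwards by applying existence to the abelian Lie algebra with basis indexed by $V$. Your grading argument for it is nonetheless correct.
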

	Note that a $c$-step nilpotent Lie algebra associated to a complete graph with $d$ vertices is isomorphic to the free $c$-step nilpotent Lie algebra on $d$ generators. We will denote this Lie algebra by $\n_{d,c}$. For these specific types of nilpotent Lie algebras associated to graphs we have the following result from \cite[Theorem 3.11]{Conti-Rossi-Ad-invariant}.
	\begin{proposition}
		A free $c$-step nilpotent Lie algebra on $d$ generators admits a nice basis if and only if either $c\leq 2$ or $c\leq 4$ and $d=2.$
	\end{proposition}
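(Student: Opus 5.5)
The plan splits into two parts. First, exhibit nice bases in the allowed cases. Second, rule them out in all remaining cases, using the fact that nice bases are adapted to the lower central series (\Cref{lem. UCS/LCS-adapted}) together with \Cref{prop. NB->diag. Nik.}.

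\emph{Existence.} If $c=1$ the algebra is abelian. If $c=2$, take generators $v_1,\dots,v_d$ and the brackets $[v_i,v_j]$ for $i<j$. Every bracket of two basis elements is then a basis element or zero, and each $[v_i,v_j]$ arises from exactly one pair, so the basis is nice. For $d=2$ and $c\in\{3,4\}$, take a Hall basis: $X,Y$, then $Z=[X,Y]$, then $[X,Z]$ and $[Y,Z]$, and for $c=4$ also $[X,[X,Z]]$, $[Y,[Y,Z]]$ and one more element of degree $4$. The Jacobi identity gives
\[
[X,[Y,Z]]=[Y,[X,Z]].
\]
So in degree $4$ one must choose a basis of the $3$-dimensional space spanned by $[X,[X,Z]]$, $[Y,[Y,Z]]$ and $W:=[X,[Y,Z]]$. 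I would then check condition (2) of the definition directly. The potential collision is that $W$ is hit by both $[X,[Y,Z]]$ and $[Y,[X,Z]]$. These pairs are disjoint ($\{X,[Y,Z]\}$ versus $\{Y,[X,Z]\}$), so they are allowed; the remaining conditions are a finite check.

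\emph{Non-existence.} By \Cref{prop. NB->diag. Nik.}, a nice basis can be chosen diagonalizing a pre-Einstein derivation $N$. For free nilpotent Lie algebras, $N$ is a multiple of the grading derivation, since $\mathrm{GL}_d$ acts and $N$ can be averaged to commute with it. This diagonalization only recovers the grading, so the real constraint has to come from the lower central series. By \Cref{lem. UCS/LCS-adapted}, after reordering, the basis consists of elements $x_1,\dots,x_d$ projecting to a basis of $\g/\gamma_2$, elements spanning $\gamma_2$ modulo $\gamma_3$, and so on, all respecting the filtration. The degree-$1$ part of a bracket of filtration elements is well defined. A counting argument then works with the leading terms. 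Consider the brackets $[x_i,y]$ where $y$ runs over the basis elements of filtration degree $k$. By condition (1), each bracket lands on at most one basis element. By condition (2), a fixed target of degree $k+1$ can be hit by at most $\lfloor m/2\rfloor$ disjoint pairs involving it, but more importantly each generator $x_i$ can contribute at most once per target.

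So the key inequality I aim for is the following. Let $a_k=\dim\gamma_k/\gamma_{k+1}$ (Witt's formula). Bracketing with a fixed generator $x_i$ is injective on targets, in the sense that distinct $y$ give distinct targets. Moreover, for every target $w$ at least one product landing on $w$ must occur, because degree $k+1$ is spanned by brackets with generators. Combining this with condition (2), I would try to derive a contradiction in two cases. For $d\ge3$, $c=3$, compare $d\cdot a_2$ brackets against $a_3=(d^3-d)/3$ targets and use Jacobi relations to force two non-disjoint pairs onto the same target. For $d=2$, $c=5$, compare $a_4=3$ against $a_5=6$.

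\emph{Main obstacle.} The hard part is making the Jacobi-induced coincidences rigorous: showing that some relation $[x_1,[x_2,x_3]]+\text{cyclic}=0$ forces two brackets sharing an element to be nonzero multiples of the same basis vector. I would first try to reduce to the minimal cases, $\n_{3,3}$ and $\n_{2,5}$. This works because quotients by higher terms of the lower central series, and subalgebras generated by subsets of basis generators, inherit nice bases via \Cref{lem. UCS/LCS-adapted}. In those two small cases I would do the explicit computation, in the spirit of the $\n_6$ argument.
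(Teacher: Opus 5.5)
The paper does not prove this statement itself. It quotes it from \cite[Theorem 3.11]{Conti-Rossi-Ad-invariant}, so your plan has to stand on its own.

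\textbf{What works.} The existence half is correct, including the $d=2$, $c=4$ check. The reduction of non-existence to $\n_{3,3}$ and $\n_{2,5}$ is also sound:
\begin{itemize}
\item Quotients by $\gamma_k$ inherit nice bases by \Cref{lem. UCS/LCS-adapted}.
\item The subalgebra generated by a subset $S$ of a nice basis inherits a nice basis. The reason is not adaptedness to the lower central series: the subalgebra is spanned by the basis vectors obtained from $S$ by repeatedly taking bracket targets.
\item Three basis vectors that are independent modulo $\gamma_2$ generate a copy of $\n_{3,c}$, by the universal property.
\end{itemize}
Passing to $\Car(\n)\cong\n$, you may also take the nice basis to be homogeneous. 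Its degree-one part is $x_1,\dots,x_d$. Since the $\binom{d}{2}$ brackets $[x_i,x_j]$ are linearly independent, its degree-two part consists of multiples of these brackets.

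\textbf{The gap: $\n_{3,3}$.} The mechanism you aim for in the base cases does not exist. In the cyclic Jacobi relation the pairs $\{x_1,[x_2,x_3]\}$, $\{x_2,[x_3,x_1]\}$, $\{x_3,[x_1,x_2]\}$ are pairwise disjoint. So this relation can never produce a violation of condition (2). It only forces the three brackets onto one basis vector, which is allowed. Pigeonholing $d\,a_2=9$ brackets into $a_3=8$ targets is equally harmless. The contradiction runs the other way. After this collapse, the nine brackets $[x_i,[x_j,x_k]]$ have at most $7$ distinct targets. But all $8$ basis vectors of degree three must occur as targets, since that component is spanned by these brackets. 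For general $d\geq 3$ the same count gives at most $d(d-1)(d+4)/6<(d^3-d)/3$ targets.

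\textbf{The gap: $\n_{2,5}$.} Here your count is exactly balanced. Put $Z=[X,Y]$, $A=[X,Z]$, $B=[Y,Z]$. The degree-four basis is forced to consist of multiples of $[X,A]$, $[X,B]=[Y,A]$ and $[Y,B]$. The six brackets of $X,Y$ with these are linearly independent, so they hit the six degree-five basis vectors bijectively. The obstruction is a bracket outside your framework:
\[
[Z,A]=[X,[Y,A]]-[Y,[X,A]]
\]
has nonzero components along two distinct basis vectors, which violates condition (1).

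So counting brackets $[x_i,y]$ of a generator with a degree-$k$ element cannot close the argument on its own. You need the target count after the Jacobi collapse for $\n_{3,3}$, and the condition-(1) check on brackets of non-generators for $\n_{2,5}$.
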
 
	By using the universal property, we find the following lemma. In the proof we will use the fact that if $\{X_1,\dots,X_d\}$ is a set of vectors in a nilpotent Lie algebra $\n$ whose projections onto $\n/\gamma_2(\n)$ form a basis, then $\{X_1,\dots,X_d\}$ is a set of generators for $\n.$ We will also use this fact in the proofs in the following sections. 
	\begin{lemma}\label[lemma]{lem. complete->1NB}
		If the Lie algebra $\n_{d,c}$ admits a nice basis, then it is unique up to equivalence.
	\end{lemma}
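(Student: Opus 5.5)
The plan is to reduce to nice bases containing the standard free generators, using the universal property, and then show that such a nice basis is forced line by line, degree by degree.

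\textbf{Step 1: pull back to the standard generators.} Let $\mathcal{B}$ be a nice basis of $\n_{d,c}$. By \Cref{lem. UCS/LCS-adapted}, $\gamma_2(\n_{d,c})$ is spanned by $\mathcal{B}\cap\gamma_2(\n_{d,c})$. Hence the remaining elements $X_1,\dots,X_d$ of $\mathcal{B}$ project onto a basis of $\n_{d,c}/\gamma_2(\n_{d,c})$, which has dimension $d$, so they generate $\n_{d,c}$. Since $\n_{d,c}$ is free $c$-step nilpotent (complete graph), the universal property gives a Lie algebra morphism $\varphi$ with $\varphi(v_i)=X_i$, where $v_1,\dots,v_d$ are the standard generators. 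Its image contains a generating set, so $\varphi$ is surjective, and by dimension it is an automorphism. Then $\varphi^{-1}(\mathcal{B})$ is a nice basis equivalent to $\mathcal{B}$ that contains $v_1,\dots,v_d$. It therefore suffices to show that there is at most one nice basis (up to rescaling its elements) containing $v_1,\dots,v_d$.

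\textbf{Step 2: induction on degree.} Let $\mathcal{B}$ be a nice basis containing $v_1,\dots,v_d$. Grade $\n_{d,c}=\bigoplus_{k=1}^c\n^{(k)}$ by bracket length, so that $\gamma_k=\bigoplus_{j\geq k}\n^{(j)}$. By \Cref{lem. UCS/LCS-adapted}, $\mathcal{B}$ is adapted to the lower central series; write $\mathcal{B}_k:=\mathcal{B}\cap(\gamma_k\setminus\gamma_{k+1})$. I will prove by induction on $k$ that
\begin{center}
every element of $\mathcal{B}_k$ is homogeneous of degree $k$, and the set of lines $\{\R Y\mid Y\in\mathcal{B}_k\}$ equals the set of nonzero lines $\R[v_i,Z]$ with $Z\in\mathcal{B}_{k-1}$.
\end{center}
The case $k=1$ holds by construction, since $\mathcal{B}_1=\{v_1,\dots,v_d\}$.

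For the inductive step, assume the claim for $\mathcal{B}_1,\dots,\mathcal{B}_{k-1}$. Take $Z\in\mathcal{B}_{k-1}$ with $[v_i,Z]\neq 0$. By niceness, $[v_i,Z]$ is a multiple of a single element $Y\in\mathcal{B}$. This bracket is homogeneous of degree $k$, so it is nonzero modulo $\gamma_{k+1}$, and therefore $Y\in\mathcal{B}_k$ and $Y$ is homogeneous of degree $k$. Conversely, $\gamma_k=[\n_{d,c},\gamma_{k-1}]$ is spanned modulo $\gamma_{k+1}$ by the brackets $[v_i,Z]$ with $Z\in\mathcal{B}_{k-1}$. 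Each of these is a multiple of an element of $\mathcal{B}_k$, and $\mathcal{B}_k$ is linearly independent modulo $\gamma_{k+1}$. Hence every element of $\mathcal{B}_k$ must occur as such a multiple, which gives equality of the two sets of lines.

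\textbf{Step 3: conclusion.} The inductive description determines the lines spanned by $\mathcal{B}_k$ purely from $v_1,\dots,v_d$ and the Lie bracket. Consequently any two nice bases containing $v_1,\dots,v_d$ agree up to rescaling, and so they are equivalent via the identity automorphism. Combined with Step 1, any two nice bases of $\n_{d,c}$ are equivalent.

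\textbf{Main obstacle.} The delicate point is Step 2: a priori an element of $\mathcal{B}$ in $\gamma_k\setminus\gamma_{k+1}$ could be non-homogeneous, for example $[v_1,v_2]$ plus a term from $\gamma_3$. The argument rules this out by using that each bracket of homogeneous basis elements is itself homogeneous and nonzero modulo $\gamma_{k+1}$, and that niceness forces it onto a single basis line. I expect that checking this interplay carefully, together with the spanning and counting argument modulo $\gamma_{k+1}$, is where the actual work lies. The restriction on $c$ and $d$ enters only through the existence of a nice basis.
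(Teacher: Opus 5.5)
Your proof is correct and follows the paper's approach: you use the universal property to send the standard generators to the $d$ nice basis elements outside $\gamma_2$, and then argue that the rest of the nice basis is forced. Your degree-by-degree induction (each element of $\mathcal{B}_k$ is a multiple of some $[v_i,Z]$ with $Z\in\mathcal{B}_{k-1}$, by niceness plus the counting argument modulo $\gamma_{k+1}$) rigorously justifies the paper's brief final claim that every other basis element is a bracket of the generators and so the two bases are matched.
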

	\begin{proof}
		We can view $\n_{d,c}$ as the $c$-step nilpotent Lie algebra associated to a complete graph on the vertices $\{v_1,\dots,v_d\}.$ 
		If $\{X_1,\dots,X_n\}$ is nice basis for $\n_{d,c}$, then we can assume after possibly reordering that the projections of
		\[X_1,\dots,X_d\]
		onto $\n_{d,c}/\gamma_2(\n_{d,c})$ are nonzero. By the universal property we can now find a Lie algebra morphism $\varphi:\n_{d,c}\rightarrow\n_{d,c}$ sending $v_i$ to $X_i$ for all $i\in\{1,\dots,d\}.$ This is an isomorphism as it maps generators to generators. From this we can conclude that all nice bases for $\n_{d,c}$ are equivalent, as every other element in the nice basis can be written as a bracket of these generators, and thus these are mapped onto each other.
	\end{proof}
	The universal property can also be used to prove the following.
	\begin{lemma}\label[lemma]{lem. pi:G->G'}
		Let $\G=(V,E)$ be a simple graph and let $c>1$. Suppose that $\G'=(V',E')$ is a subgraph, then there exists a Lie algebra morphism 
		\[\pi:\n_{\G,c}\rightarrow\n_{\G',c}\]
		such that for any $v\in V$ it holds that
		\[\pi(v)=\begin{cases}
			v\text{ if }v\in V'\\
			0 \text{ if }v\notin V'
		\end{cases}\]
	\end{lemma}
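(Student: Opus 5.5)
We apply the universal property of $\n_{\G,c}$ to a suitably chosen map $f:V\rightarrow\n_{\G',c}$. Define
\[f(v)=\begin{cases} v &\text{ if } v\in V',\\ 0 &\text{ if } v\notin V'.\end{cases}\]
The target $\n_{\G',c}$ is a quotient of the free $c$-step nilpotent Lie algebra $\n_c(V')$. Its lower central series therefore satisfies $\gamma_{c+1}(\n_{\G',c})=0$, so it is nilpotent of class at most $c$. This is all the universal property needs, provided we read ``$c$-step'' as ``nilpotent of class at most $c$''. That reading is implicit in the universal property of the free $c$-step nilpotent algebra itself. If the paper insists on class exactly $c$, we would instead pass through $\n_c(V)$ and check that the resulting map kills $I_\G$.

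Next we verify the compatibility condition: $[f(v_1),f(v_2)]=0$ whenever $\{v_1,v_2\}\notin E$. There are two cases.
\begin{enumerate}[(1)]
\item If $v_1\notin V'$ or $v_2\notin V'$, then one of $f(v_1),f(v_2)$ is zero, so the bracket vanishes trivially.
\item If $v_1,v_2\in V'$ and $\{v_1,v_2\}\notin E$, then $\{v_1,v_2\}\notin E'$, because $E'\subseteq E$ for a subgraph. Hence $[v_1,v_2]$ is one of the generators of $I_{\G'}$, and it is zero in $\n_{\G',c}$.
\end{enumerate}
So $f$ satisfies the hypothesis. The universal property then yields a unique Lie algebra morphism $\pi:\n_{\G,c}\rightarrow\n_{\G',c}$ with $\pi\rvert_V=f$, which is exactly the claimed formula.

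The only step needing care is the one flagged above: the universal property as stated speaks of ``$c$-step nilpotent'' targets, whereas $\n_{\G',c}$ might have smaller class. For example, if $\G'$ has no edges it is abelian. I would handle this by the direct route through $\n_c(V)$. The free property of $\n_c(V)$ gives a morphism $\tilde\pi:\n_c(V)\rightarrow\n_{\G',c}$ extending $f$, valid for any target of class at most $c$. Case (2) above shows that $\tilde\pi$ kills every generator $[v_1,v_2]$ of $I_\G$. Hence $\tilde\pi$ kills the ideal $I_\G$ and descends to the quotient $\n_{\G,c}=\n_c(V)/I_\G$, giving $\pi$.
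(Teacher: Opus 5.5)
Your proof is correct and is the argument the paper intends: the paper gives no written proof beyond the remark that the universal property yields the lemma, and you apply that property to $f$, with the compatibility check resting on $E'\subseteq E$. Your treatment of ``class at most $c$'' versus ``exactly $c$'' and the fallback through $\n_c(V)$ are sound. One small slip there: it is cases (1) and (2) together, not case (2) alone, that show $\tilde\pi$ kills the generators of $I_\G$.
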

	As noted in the introduction, the following result is already known by \cite{Lauret-Will-first-mention}.  
	\begin{lemma}
		Any two-step nilpotent Lie algebra associated to a graph $\G=(V,E)$ admits a nice basis.
	\end{lemma}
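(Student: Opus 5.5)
The plan is to write down the natural basis of $\n_{\G,2}$ and check the two conditions of a nice basis directly. First I will describe $\n_{\G,2}$ concretely. The free two-step nilpotent Lie algebra $\n_2(V)$ has basis $V\cup\{[v,w]\mid v<w\}$ for a fixed total order on $V$, and every bracket of length three vanishes. The ideal $I_\G$ is generated by the brackets $[v,w]$ with $\{v,w\}\notin E$. These elements are central in $\n_2(V)$, so the ideal they generate is just their linear span. Hence
\[\n_{\G,2}=\Span(V)\oplus\Span\{Z_{vw}\mid \{v,w\}\in E,\ v<w\},\]
where $Z_{vw}$ denotes the image of $[v,w]$. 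The elements $V\cup\{Z_{vw}\mid\{v,w\}\in E\}$ form a basis, because the surviving basis vectors of $\n_2(V)$ are linearly independent modulo the span of the removed ones.

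Next I will read off the structure constants in this basis $\mathcal{B}$. For $v<w$ in $V$ we have $[v,w]=Z_{vw}$ if $\{v,w\}\in E$ and $[v,w]=0$ otherwise. All other brackets of basis elements vanish, since every $Z_{vw}$ is central.

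Condition (1) then holds immediately, because each bracket of two basis elements is either zero or exactly one basis vector. For condition (2), suppose $c_{ij}^k\neq0$ and $c_{st}^k\neq0$. Then $X_k=Z_{vw}$ for a single edge $\{v,w\}$. The only pair of basis elements whose bracket is a nonzero multiple of $Z_{vw}$ is $\{v,w\}$. Hence $\{i,j\}=\{s,t\}$, and $\mathcal{B}$ is nice.

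The only step needing care is the claim that $I_\G$ equals the linear span of the non-edge brackets and introduces no further relations. This follows from centrality in the two-step quotient, so I expect no real obstacle.
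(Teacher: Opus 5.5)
Your proposal is correct and follows essentially the same approach as the paper: you take the basis consisting of the vertices together with the brackets $[v_i,v_j]$ for edges with $i<j$, and check the two conditions directly. Your extra verification that $I_\G$ is just the linear span of the non-edge brackets (because they are central in $\n_2(V)$) fills in a detail the paper leaves implicit.
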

	\begin{proof}
		Suppose that $V=\{v_1,\dots,v_n\}$ and consider the sets
		\[\mathcal{B}_1:=\{v_1,\dots,v_n\},\quad \mathcal{B}_2:=\{[v_i,v_j]\mid \{i,j\}\in E \text{ and } i<j\}.\]
		It now follows immediately that $\mathcal{B}:=\mathcal{B}_1\cup \mathcal{B}_2$ is a nice basis for $\n_{\G,2}.$
	\end{proof}
	For nilpotency classes larger than $2$, there is some more work to be done. We will discuss this in the following sections.
	\subsection{3-step nilpotent Lie algebras associated to graphs}
	We will show in the following example that $\n_{\G,3}$ admits a nice basis if $\G$ does not contain a $3$-cycle.
	\begin{example}\label[example]{ex. NB-if-no-3-cycle}
	Suppose that $\G=(V,E)$ does not contain a $3$-cycle. We will construct a nice basis for $\n_{\G,3}$ explicitly. As explained in \cite{Wade}, every nilpotent Lie algebra associated to a graph has a Lyndon basis. For $\n_{\G,3}$ this basis corresponds to the union of the following four sets
	\begin{itemize}
		\item$B_1:=V=\{v_1,\dots,v_n\}$
		\item$B_2:=\{[v_i,v_j]\mid i<j\text{ and }\{i,j\}\in E\}$
		\item $B_3:=\{[v_i,[v_j,v_k]]\mid i<k\text{ and } \{v_i,v_j\},\{v_j,v_k\}\in E\}$
		\item $B_4:=\{[v_i,[v_i,v_j]],[v_j,[v_i,v_j]]\mid i<j \text{ and } \{v_i,v_j\}\in E\}$
	\end{itemize}
We now claim that
 \[\mathcal{B}=B_1\cup B_2\cup B_3\cup B_4\] is indeed a nice basis. First note that the Lie bracket of any two elements in $B_1$ is a multiple of an element in $B_2.$ Moreover, for each element $[v_i,v_j]\in B_2$ there exists a unique pair of vectors in $B_1$, namely $v_i$ and $v_j$, whose Lie bracket is a multiple of $[v_i,v_j].$ Now note that for all $v_i\in B_1$ and $[v_j,v_k]\in B_2$ it holds that if $i\neq j,k$
	\[[v_i,[v_j,v_k]]=\begin{cases}
			0 &\text{ if }\{v_i,v_j\},\{v_i,v_k\}\notin E\\
			[v_i,[v_j,v_k]]&\text{ if }i<k,\{v_i,v_j\}\in E\\
			[v_k,[v_j,v_i]]&\text{ if }k<i,\{v_i,v_j\}\in E\\
			-[v_i,[v_k,v_j]]&\text{ if }i<j,\{v_i,v_k\}\in E\\
			[v_j,[v_k,v_i]]&\text{ if }j<i,\{v_i,v_k\}\in E\\
		\end{cases}.\]
		If $i=k$, then \[[v_i,[v_j,v_k]]=\begin{cases}
			[v_i,[v_j,v_i]] &\text{ if }j<i\\
			-[v_i,[v_i,v_j]]& \text{ if }i<j
		\end{cases}\]
		If $i=j$, then \[[v_i,[v_j,v_k]]=\begin{cases}
			[v_i,[v_i,v_k]] &\text{ if }i<k\\
			-[v_i,[v_k,v_i]]& \text{ if }k<i
		\end{cases}.\]
		From this it is clear that $[v_i,[v_j,v_k]]$ is always a multiple of an element in $B_3\cup B_4$. Moreover, given an element $[v_i,[v_j,v_k]]\in B_3\cup B_4$ there is only one choice of a pair of elements in $B_1$ and $B_2$ whose Lie bracket is a multiple of $[v_i,[v_j,v_k]].$ We can thus conclude that $\mathcal{B}$ is a nice basis for $\n_{\G,3}.$
	\end{example}
	We can now use the above example to prove the following.
	\begin{proposition}\label[proposition]{prop. 3-cycle}
	A $3$-step nilpotent Lie algebra associated to a graph $\mathcal{G}$ admits a nice basis if and only if $\mathcal{G}$ does not contain a $3$-cycle.
	\end{proposition}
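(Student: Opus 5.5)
The ``if'' direction is \Cref{ex. NB-if-no-3-cycle}, so only the ``only if'' direction needs an argument. I will not pass a nice basis down to a quotient, since the remark after \Cref{thm. |g|=|g+Rn|} shows that niceness is not inherited in this way. Instead I will work directly with a nice basis of $\n_{\G,3}$, and use the morphism of \Cref{lem. pi:G->G'} only to detect that certain elements are nonzero and linearly independent. The contradiction will come from the Jacobi identity.

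Suppose $\G$ contains a $3$-cycle on vertices $a,b,c$, and let $\G'$ be this triangle. Then $\n_{\G',3}\cong\n_{3,3}$ is free $3$-step nilpotent on $a,b,c$, and \Cref{lem. pi:G->G'} gives a morphism $\pi:\n_{\G,3}\to\n_{3,3}$. Let $\mathcal{B}$ be a nice basis of $\n_{\G,3}$.

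\begin{enumerate}[(1)]
\item By \Cref{lem. UCS/LCS-adapted}, $\gamma_2$ is spanned by elements of $\mathcal{B}$. Hence the elements $X_1,\dots,X_n$ of $\mathcal{B}$ outside $\gamma_2$ project to a basis of $\n_{\G,3}/\gamma_2\cong\Span V$.
\item Their images $\pi(X_i)$ modulo $\gamma_2(\n_{3,3})$ therefore span $\Span\{a,b,c\}$. So we can pick three of them, say $X,Y,Z$, whose images are linearly independent there.
\item The images $\pi(X),\pi(Y),\pi(Z)$ generate $\n_{3,3}$ and hence form a free generating triple. In the free $3$-step nilpotent Lie algebra on three generators, the elements
\[[x,[y,z]],\quad [y,[z,x]],\quad [z,[x,y]]\]
are pairwise linearly independent, because only the single Jacobi relation links them. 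Consequently the corresponding brackets of $X,Y,Z$ are nonzero and pairwise linearly independent in $\n_{\G,3}$.
\item By niceness, $[Y,Z]=\alpha U$, $[Z,X]=\beta U'$ and $[X,Y]=\gamma U''$ with $U,U',U''\in\mathcal{B}$ and $\alpha,\beta,\gamma$ nonzero. Using niceness again, $[X,U]$, $[Y,U']$ and $[Z,U'']$ are each a multiple of a single element of $\mathcal{B}$.
\item So $[X,[Y,Z]]$, $[Y,[Z,X]]$ and $[Z,[X,Y]]$ are nonzero multiples of basis elements $W,W',W''\in\mathcal{B}$.
\item Since these three elements are pairwise linearly independent, $W,W',W''$ are three distinct basis vectors.
\item The Jacobi identity then gives a nontrivial linear relation among three distinct elements of a basis, which is a contradiction.
\end{enumerate}

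The main obstacle is step (3): justifying that the three iterated brackets are pairwise independent. The plan is to handle it inside $\n_{3,3}$, using its Lyndon (Hall) basis from \cite{Wade} on the generators $\pi(X),\pi(Y),\pi(Z)$. There $[x,[y,z]]$ and $[y,[x,z]]$ are independent degree-$3$ basis elements, and the third bracket equals minus their sum, up to sign, so all three are pairwise independent.

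Two smaller points also need care. First, $\pi(X)$ may have a component in $\gamma_2(\n_{3,3})$; this is harmless, since it only changes the degree-$3$ brackets by terms in $\gamma_4=0$. Second, the images must generate $\n_{3,3}$; this follows from the fact recalled before \Cref{lem. complete->1NB}.
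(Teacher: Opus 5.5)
Your proof is correct, but it reaches the contradiction by a different route than the paper.

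The paper also chooses three nice-basis elements $X_1,X_2,X_3$ whose images under $\pi$ generate $\n_{\G',3}\cong\n_{3,3}$. It then uses the universal property to show that the subalgebra of $\n_{\G,3}$ generated by $X_1,X_2,X_3$ is itself free $3$-step nilpotent on three generators. Since that subalgebra is spanned by the basis elements that occur, up to scalars, as iterated brackets of $X_1,X_2,X_3$, it inherits a nice basis. This contradicts the result of Conti and Rossi that $\n_{3,3}$ has no nice basis.

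You never leave $\n_{\G,3}$. You use $\pi$ only to certify that $[X,[Y,Z]]$, $[Y,[Z,X]]$ and $[Z,[X,Y]]$ are pairwise independent, and the Jacobi identity then gives a nontrivial relation among three distinct basis vectors.

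What your version buys:
\begin{enumerate}[(1)]
\item It is self-contained: taking $\G'=\G$ the triangle and $\pi=\Id$, it reproves that $\n_{3,3}$ has no nice basis.
\item It uses only condition (1) of the definition, so it rules out even bases satisfying that condition alone.
\item It sidesteps the step the paper leaves unstated, namely that a subalgebra generated by part of a nice basis is spanned by part of it.
\end{enumerate}
Incidentally, your step (1) is not needed: any three basis elements with independent images modulo $\gamma_2(\n_{3,3})$ will do.

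What the paper's version buys is modularity. The same template, with $\n_{2,5}$ in place of $\n_{3,3}$, immediately settles nilpotency class $5$ by citing the known classification of free nilpotent Lie algebras with a nice basis. Your approach would instead require finding and checking a suitable Jacobi triple afresh in each such case.
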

	\begin{proof}
		The \textit{`if'-}part follows from \Cref{ex. NB-if-no-3-cycle}. For the \textit{`only if'-}part suppose by contradiction that $\G$ contains a $3$-cycle $\G'$ and let $\{X_1,\dots,X_n\}$ be a nice basis for $\n_{\G,3}$. We find from \Cref{lem. pi:G->G'} that there is a surjective Lie algebra morphism
		\[\pi:\n_{\G,3}\rightarrow\n_{\G', 3}.\]
		Let $p:\n_{\G',3}\rightarrow \n_{\G',3}/\gamma_2(\n_{\G',3})$ denote the projection morphism. Without loss of generality we can now suppose that $\{(p\circ\pi)(X_1),(p\circ\pi)(X_2),(p\circ\pi)(X_3)\}$ forms a basis for $\n_{\G',3}/\gamma_2(\n_{\G',3})$. 
		
		We now claim that the Lie subalgebra $\n$ of $\n_{\G,3}$ generated by $X_1,X_2$ and $X_3$ is a free $3$-step nilpotent Lie algebra. To show this we will use the universal property. Let $f:\{X_1,X_2,X_3\}\rightarrow \mathfrak{h}$ be any map, where $\mathfrak{h}$ is a $3$-step nilpotent Lie algebra. Since $\pi(X_1),\pi(X_2)$ and $\pi(X_3)$ are generators for $\n_{\G',3}$, it follows that we can find a unique Lie algebra morphism \[\varphi:\n_{\G',3}\rightarrow \mathfrak{h}\] such that for all $i\in\{1,2,3\}$ we have that $\varphi(\pi(X_i))=f(X_i).$ We can now look at the composition \[\varphi\circ \pi:\n\rightarrow \mathfrak{h}.\] Uniqueness follows from the fact that $\n$ is generated by $\{X_1,X_2,X_3\}.$ 

		Since we assumed that $\{X_1,\dots,X_n\}$ is a nice basis for $\n_{\G,3}$, we conclude that $\n\cong\n_{3,3}$ has a nice basis as well which is impossible.
	\end{proof}
	
	\subsection{4-step nilpotent Lie algebras}
	We will show that a $4$-step nilpotent Lie algebra associated to a graph $\G$ admits a nice basis if and only if $\G$ does not have $\G'=(V',E')$ as a subgraph where $V'=\{v_1,v_2,v_3\}$ and $E'=\{\{v_1,v_2\},\{v_2,v_3\}\}.$
	\begin{center}
		\begin{tikzpicture}[
			node/.style={circle, fill=black, inner sep=2pt},
			label distance=2mm
			]
			
			\node[node, label=above:{$v_1$}] (v1) at (-1,0) {};
			\node[node, label=above:{$v_2$}] (v2) at (0,0) {};
			\node[node, label=above:{$v_3$}] (v3) at (1,0) {};
			
			\draw (v1) -- (v2) -- (v3);		
		\end{tikzpicture}
	\end{center}
	 A tool that we will need to prove this fact is the definition of a \textit{Carnot Lie algebra}.
	\begin{definition}
		A nilpotent Lie algebra $\n$ is a Carnot Lie algebra if it has a grading $\n=\bigoplus_{i=1}^c \n_i$ such that $\n_1$ generates $\n.$ Here, a grading means that $[\n_i,\n_j] \subset \n_{i+j}$ for all $i, j \in \{1, \ldots, c\}$.
	\end{definition}
\noindent	From the definition above it follows immediately that any nilpotent Lie algebra associated to a graph is a Carnot Lie algebra.  
	
	We can associate to a given $c$-step nilpotent Lie algebra $\n$ a Carnot Lie algebra $\Car(\n)$. As a vector space it corresponds to the direct sum 
	\[\bigoplus_{i=1}^c \frac{\gamma_i(\n)}{\gamma_{i+1}(\n)}\]
	and for $X\in\gamma_i(\n)$ and $Y\in \gamma_j(\n)$ we define
	\[
	[X+\gamma_{i+1}(\n),Y+\gamma_{j+1}(\n)]:=[X,Y]+\gamma_{i+j+1}(\n).\]
	This definition can now be extended linearly to the entire vector space. It is well-known that if $\n$ is a Carnot Lie algebra, then $\Car(\n)\cong\n$.

	If $\{X_1,\dots,X_n\}$ is a nice basis for a $c$-step nilpotent Lie algebra $\n$, we can suppose without loss of generality that the first $k_1$ vectors of this basis belong to $\n\setminus\gamma_{2}(\n)$, the following $k_2$ vectors belong to $\gamma_{2}(\n)\setminus\gamma_{3}(\n)$ and so on. It then follows that
	$$\{X_1+\gamma_2(\n)\dots,X_{k_1}+\gamma_2(\n),X_{k_1+1}+\gamma_3(\n),\dots,X_{n}+\gamma_{c+1}(\n)\}$$
	forms a nice basis for $\Car(\n).$ It thus follows that the existence of a nice basis for $\n$ implies the existence of a nice basis for $\Car(\n).$

It can be shown that the following vectors form a Lyndon basis (see \cite{Wade}) for $\n_{\mathcal{G'},4}$:
\begin{itemize}
	\item $v_1,v_2,v_3,$
	\item $[v_1,v_2],[v_2,v_3],$
	\item $[v_1,[v_1,v_2]],\quad[v_1,[v_2,v_3]],\quad[v_2,[v_2,v_3]],$\\\\
	$[v_2,[v_1,v_2]],\quad[v_3,[v_2,v_3]],$
	\item $[v_1,[v_1,[v_1,v_2]]],\quad[v_1,[v_2,[v_1,v_2]]],\quad[v_2,[v_2,[v_1,v_2]]]$,\\\\$[v_2,[v_2,[v_2,v_3]]],\quad[v_2,[v_3,[v_2,v_3]]],\quad[v_3,[v_3,[v_2,v_3]]],$\\\\$[v_1,[v_1,[v_2,v_3]]],\quad[v_1,[v_2,[v_2,v_3]]],\quad[v_1,[v_3,[v_2,v_3]]],$\\\\
	$[v_2,[v_1,[v_2,v_3]]].$
\end{itemize}
We will make use of this basis in the proof of the following proposition.
\begin{proposition}
	Let $\G=(V,E)$ be a graph with $\# V>1$ and $\# E\geq 1.$ Then it follows that if $\n_{\G,4}$ has a nice basis, $\G$ cannot have $\G'$ as a subgraph.
\end{proposition}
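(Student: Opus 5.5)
The proof has two stages: reduce to the single graph $\G'$, then show directly that $\n_{\G',4}$ has no nice basis.

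\emph{Reduction.} Suppose $\n_{\G,4}$ has a nice basis $\{X_1,\dots,X_n\}$ and $\G$ contains the path $v_1 - v_2 - v_3$.

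If also $\{v_1,v_3\}\in E$, then $\G$ contains a $3$-cycle. By \Cref{lem. UCS/LCS-adapted}, $\gamma_4(\n_{\G,4})$ is spanned by basis elements. The images of the remaining basis elements then form a nice basis of $\n_{\G,4}/\gamma_4(\n_{\G,4})\cong\n_{\G,3}$, because a quotient by an ideal spanned by part of a nice basis inherits a nice basis. This contradicts \Cref{prop. 3-cycle}. Hence $\G'$ is an induced subgraph.

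Next, following the proof of \Cref{prop. 3-cycle}, use the surjection $\pi:\n_{\G,4}\to\n_{\G',4}$ of \Cref{lem. pi:G->G'}. Choose three basis elements $X_1,X_2,X_3\in\n_{\G,4}\setminus\gamma_2$ whose images $\pi(X_i)$ span $\n_{\G',4}/\gamma_2$. I then want the subalgebra $\n$ generated by $X_1,X_2,X_3$ to be isomorphic to $\n_{\G',4}$, via the universal property composed with $\pi$. This needs the relation $[X_1,X_3]=0$, or at least that $[X_1,X_3]$ is killed in an appropriate quotient. Since $\n$ is spanned by iterated brackets of nice basis elements, each of which is a multiple of a nice basis element, the basis induces a nice basis on $\n$.

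For the choice of $X_1,X_2,X_3$ I would use \Cref{prop. NB->diag. Nik.}. The pre-Einstein derivation of $\n_{\G,4}$ can be taken diagonal with respect to the nice basis. It is compatible with the $\mathbb Z^V$-multigrading, which is spanned by derivations that are diagonal on the Lyndon basis. This lets me pick generators that are multihomogeneous up to lower-order terms and check that $\pi(X_1)$ and $\pi(X_3)$ correspond to the two non-adjacent vertices. I expect this stage to be fiddly but routine.

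\emph{Core step: $\n_{\G',4}$ has no nice basis.} Pass to $\Car$, so I may assume the nice basis is adapted to the grading $\n_1\oplus\dots\oplus\n_4$. The dimensions are $3,2,5,13$, from the Lyndon basis above. The intrinsic data are:
\begin{itemize}
\item the line of $v_2$ modulo $\gamma_2$, which is the unique direction whose centralizer modulo $\gamma_2$ is trivial;
\item the plane $\Span\{v_1,v_3\}$ modulo $\gamma_2$, which is where $\mathrm{ad}$ has rank $1$ into $\n_2$.
\end{itemize}
So nice degree-$1$ elements are $Y_2=v_2+\dots$ together with two elements $Y_1,Y_3$ in $\Span\{v_1,v_3\}+\gamma_2$. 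Because $[Y_1,Y_2]$ and $[Y_3,Y_2]$ must be multiples of distinct basis elements of $\n_2$, and the two pairs $\{Y_1,Y_2\}$ and $\{Y_3,Y_2\}$ share $Y_2$, I can show that $Y_1,Y_3$ are proportional to $v_1,v_3$ modulo $\gamma_2$. Then $\n_2$ is spanned by $[v_1,v_2]$ and $[v_2,v_3]$.

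Next I track $\n_3$, whose basis elements must be proportional to brackets $[Y_i,[\cdot,\cdot]]$. Condition (2) forbids any $Z\in\n_3$ from being hit by two pairs that share an element. But $[Y_2,[Y_1,Y_2]]$ and $[Y_2,[Y_2,Y_3]]$ share $Y_2$ and land in independent directions, which is fine. The Jacobi identity, however, forces $[Y_1,[Y_2,Y_3]]=[Y_3,[Y_1,Y_2]]$ up to sign, since $[Y_1,Y_3]\in\gamma_2$ vanishes modulo $\gamma_3$. These two pairs are disjoint, so $\n_3$ alone gives no contradiction.

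The contradiction should come in $\n_4$, via a Jacobi relation such as
\[
[[Y_1,Y_2],[Y_2,Y_3]]=[Y_2,[Y_1,[Y_2,Y_3]]]-[Y_1,[Y_2,[Y_2,Y_3]]].
\]
Niceness forces each of the three brackets in this relation to be a multiple of a single basis element, and the basis elements of $\n_4$ are linearly independent. So either all three are proportional to the same basis element, or some bracket vanishes. In the first case, the pairs $\{Y_2,[Y_1,[Y_2,Y_3]]\}$ and $\{Y_1,[Y_2,[Y_2,Y_3]]\}$ hit a common target. Comparing with $[Y_2,[Y_2,[Y_1,Y_2]]]$ and $[Y_1,[Y_1,[Y_2,Y_3]]]$ through shared entries should then produce two pairs that share an element but hit the same target, violating (2). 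In the second case, a bracket that is nonzero in the Lyndon basis would vanish.

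The main obstacle is this final bookkeeping in degree $4$. There the nice basis elements of $\n_3$ are only known up to the freedom left after the degree-$3$ analysis, and I must verify that no rescaling or mixing of degree-$3$ elements avoids the collision. I would first test this using the explicit Lyndon basis, writing each degree-$3$ nice element as a general combination and solving the niceness constraints.
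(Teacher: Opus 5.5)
Your proposal has a genuine gap in the reduction step, which is where the paper's proof does its real work. To get $\langle X_1,X_2,X_3\rangle\cong\n_{\G',4}$ from the universal property, you need $[X_1,X_3]=0$ exactly, for some labelling. Nothing in your argument supplies this.

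Since $\pi$ generally has a kernel already in degree $2$, the three brackets $[X_i,X_j]$ can be multiples of three \emph{distinct} nice basis vectors of $\n_{\G,4}$. Their images can still satisfy $\pi[X_1,X_3]\equiv\alpha\,\pi[X_1,X_2]+\beta\,\pi[X_2,X_3]$ modulo $\gamma_3(\n_{\G',4})$ with $(\alpha,\beta)\neq(0,0)$. So condition (2) in degree $2$ gives you nothing.

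The appeal to the pre-Einstein derivation does not close this, and ``fiddly but routine'' is not an argument. A diagonal derivation can take equal values on different vertices. For $\G=\G'$, the symmetry $v_1\leftrightarrow v_3$ forces this for a diagonal pre-Einstein derivation. So $N$-eigenvectors need not be multihomogeneous. Even exact control modulo $\gamma_2$ would only give $[X_1,X_3]\in\gamma_3$, not $[X_1,X_3]=0$. Then $\langle X_1,X_2,X_3\rangle$ need not be a quotient of $\n_{\G',4}$, and you get no nice basis on $\n_{\G',4}$ to feed into your core step.

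What is missing is the paper's degree-$3$ count. Set $U_i=[X_i,[X_1,X_3]]$, $V_i=[X_i,[X_1,X_2]]$, $W_i=[X_i,[X_2,X_3]]$. Jacobi gives $W_1-U_2+V_3=0$. Each term is a multiple of a nice basis vector, so $\pi U_2,\pi V_3,\pi W_1$ span at most a line. Combined with the relation above, this bounds the span of all nine images in the $5$-dimensional space $\gamma_3/\gamma_4$ of $\n_{\G',4}$ by $4$, unless $\alpha=\beta=0$. After that you never need a nice basis on $\n_{\G',4}$ itself. The paper works directly with $\pi$-images and the isomorphism $\n_{\G',4}\to\Car(\n_{\G',4})$, $v_i\mapsto\pi(X_i)+\gamma_2$.

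Your core step also needs repair, although the ingredients are there.
\begin{itemize}
\item The line of $v_2$ is not intrinsic. The map $v_2\mapsto v_1+v_2$, fixing $v_1$ and $v_3$, extends to an automorphism. Moreover, every $x$ with nonzero $v_2$-coordinate has $\mathrm{ad}_x:\n_1\to\n_2$ of rank $2$.
\item An intrinsic plane does not by itself force two basis vectors into it. The right degree-$2$ argument is this: any two of the three pairs from $\{Y_1,Y_2,Y_3\}$ intersect. So by condition (2), the nonzero brackets hit distinct basis vectors of the $2$-dimensional $\n_2$. Hence exactly one bracket, say $[Y_1,Y_3]$, vanishes, which gives $Y_1,Y_3\in\Span\{v_1,v_3\}$.
\item $\dim\n_4=10$, not $13$.
\item Your degree-$4$ endgame is already finished once you note two things. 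Your Jacobi relation (correct up to an overall sign) makes the three brackets span at most a line. But $[Y_2,[Y_1,[Y_2,Y_3]]]$ and $[Y_1,[Y_2,[Y_2,Y_3]]]$ are linearly independent: the automorphism $v_i\mapsto Y_i$ carries them from the Lyndon vectors $[v_2,[v_1,[v_2,v_3]]]$ and $[v_1,[v_2,[v_2,v_3]]]$.
\end{itemize}
No collision under condition (2) is needed. There is also no freedom in the degree-$3$ basis to control, because these are iterated brackets of degree-$1$ basis vectors. This is exactly the paper's final step.
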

\begin{proof}
		Suppose $\G=(V,E)$ has a subgraph $\G'=(V',E')$ where $V'=\{v_1,v_2,v_3\}$ and $E'=\{\{v_1,v_2\},\{v_2,v_3\}\}$. We know that there exists a surjective Lie algebra homomorphism 
	\[\pi:\n_{\G,4}\rightarrow\n_{\G',4}\]
	such that $\pi(v)=v$ if $v\in V'$ and $\pi(v)=0$ if $v\in V\setminus V'$. 

	Suppose by contradiction that $\mathcal{B}=\{X_1,\dots,X_n\}$ is a nice basis for $\n_{\G,4}$. Note that, after possibly reordering, we can suppose that
	\[\{\pi(X_1)+\gamma_2(\n_{\G',4}),\pi(X_2)+\gamma_2(\n_{\G',4}),\pi(X_3)+\gamma_2(\n_{\G',4})\}\]
	forms a basis for $\n_{\G',4}/\gamma_2(\n_{\G',4}).$
	Now note that $\gamma_2(\n_{\G',4})/\gamma_3(\n_{\G',4})$ should be two-dimensional, therefore we can suppose that there are $\alpha,\beta\in\R$ such that
	\begin{equation}\label{eq. LC-in-ker(pi)}\pi([X_1,X_3]-\alpha[X_1,X_2]-\beta[X_2,X_3])\in\gamma_3(\n_{\G',4}).\end{equation}

	Now define for $i\in\{1,2,3\}$ the vectors
	\[U_i:=[X_i,[X_1,X_3]],\quad V_i:=[X_i,[X_1,X_2]],\quad W_i:=[X_i,[X_2,X_3]].\]
	It is now clear that the set \[S:=\{\pi(U_i)+\gamma_4(\n_{\G',4}),\pi(V_i)+\gamma_4(\n_{\G',4}),\pi(W_i)+\gamma_4(\n_{\G',4})\mid 1\leq i\leq 3\}\]
	spans the five-dimensional space $\gamma_3(\n_{\G',4})/\gamma_4(\n_{\G',4})$. Since $\mathcal{B}$ is assumed to be nice, we know that the dimension of the vector space spanned by $U_2,V_3$ and $W_1$ is at most one and therefore the vectors $\pi(U_2),\pi(V_3)$ and $\pi(W_1)$ span at most a one-dimensional vector space. 
	
	First assume that we can find $\gamma,\delta\in\R$ such that  \[\pi(U_2)=\gamma\pi(W_1),\quad \pi(V_3)=\delta\pi(W_1).\] 
	We thus find that $S$ spans at most a seven-dimensional vector space. Moreover, we know by \eqref{eq. LC-in-ker(pi)} that
	\begin{align*}\pi(U_1)-\alpha\pi(V_1)-\beta\pi(W_1)&\in\gamma_4(\n_{\G',4}),\\
		\gamma\pi(W_1)-\alpha\pi(V_2)-\beta\pi(W_2)&\in\gamma_4(\n_{\G',4}),\\
		\pi(U_3)-\alpha\delta\pi(W_1)-\beta\pi(W_3)&\in\gamma_4(\n_{\G',4}).
	\end{align*} If $(\alpha,\beta)\neq (0,0)$, we can conclude from the above equations that the dimension of the vector space spanned by $S$ is bounded above by four, which is impossible. 
	
	A similar analysis works if $\pi(W_1) = 0$, and hence we conclude that $\alpha=0=\beta$ and thus
	\[[\pi(X_1),\pi(X_3)]\in\gamma_3(\n_{\G',4}).\] By making $\n_{\G',4}$ Carnot, it then follows that $\pi(X_1)+\gamma_{2}(\n_{\G',4}),\pi(X_2)+\gamma_{2}(\n_{\G',4})$ and $\pi(X_3)+\gamma_{2}(\n_{\G',4})$ generate $\Car(\n_{\G',4})$ and moreover
	\[[\pi(X_1)+\gamma_{2}(\n_{\G',4}),\pi(X_3)+\gamma_{2}(\n_{\G',4})]=0+\gamma_3(\n_{\G',4}).\]
	We can thus find a Lie algebra morphism $\varphi:\n_{\G',4}\rightarrow \Car(\n_{\G',4})$ such that $\varphi(v_i)=\pi(X_i)+\gamma_2(\n_{\G',4})$ for $i\in\{1,2,3\}.$ Since $\Car(\n_{\G',4})\cong\n_{\G',4}$, their dimensions must be equal. Moreover, as $\pi(X_1)+\gamma_2(\n_{\G',4}),\pi(X_2)+\gamma_2(\n_{\G',4})$ and $\pi(X_3)+\gamma_2(\n_{\G',4})$ generate $\Car(\n_{\G',4})$, it follows that $\varphi$ is an isomorphism.

	Since $\mathcal{B}$ is a nice basis we know that
	\[\Span\{\pi([[X_1,X_2],[X_2,X_3]]), \pi([X_2,[X_1,[X_2,X_3]]]),\pi([X_1,[X_2,[X_2,X_3]]])\}\]
	should be at most one-dimensional by the Jacobi identity. Therefore the corresponding vectors in $\Car(\n_{\G',4})$ can also span at most a one-dimensional vector space. We now obtain a contradiction since we know that $[v_2,[v_1,[v_2,v_3]]]$ and $[v_1,[v_2,[v_2,v_3]]]$ are linearly independent, so they should be mapped by $\varphi$ to linearly independent vectors. We can thus conclude that $\n_{\mathcal{G},4}$ does not admit a nice basis.
\end{proof}
Note that a graph $\mathcal{G}$ which does not contain $\mathcal{G}'$ as a subgraph must be of the following form
\begin{center}
	
	\begin{tikzpicture}[
		node/.style={circle, fill=black, inner sep=2pt}, 
		label distance=2mm
		]
		\node[node, label=above:{$v_1$}] (v1) at (-2,1) {};
		\node[node, label=below:{$v_2$}] (v2) at (-2,-1) {};
		\node[node, label=above:{$v_3$}] (v3) at (-1,1) {};
		\node[node, label=below:{$v_4$}] (v4) at (-1,-1) {};
		\node[node, label=above:{$v_{n-1}$}] (vn-1) at (0,1) {};
		\node[node, label=below:{$v_n$}] (vn) at (0,-1) {};
		\node[node, label=above:{$w_1$}] (w1) at (1,0) {};
		\node[node, label=above:{$w_2$}] (w2) at (2,0) {};
		\node[node, label=above:{$w_m$}] (wm) at (3,0) {};
		
		\draw (v1) -- (v2);
		\draw (v3) -- (v4);
		\draw (vn-1) -- (vn);
		\path (v3) -- node[auto=false]{\ldots} (vn);
		\path (w2) -- node[auto=false]{\ldots} (wm);
	\end{tikzpicture}
\end{center}
It is now relatively easy to check that the 4-step nilpotent Lie algebra associated to this graph is isomorphic to $\n_{2,4}^k\oplus\R^m$, where $n=2k$. This Lie algebra clearly admits a nice basis. We thus proved the following result.
\begin{proposition}\label[proposition]{prop. n(G-4) NB}
	A 4-step nilpotent Lie algebra associated to a graph $\mathcal{G}$ admits a nice basis if and only if it does not contain a subgraph $\mathcal{G}'=(V',E')$ where $V'=\{v_1,v_2,v_3\}$ and $E'=\{\{v_1,v_2\},\{v_2,v_3\}\}$.
\end{proposition}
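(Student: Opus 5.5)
The statement has two directions, and I would prove them separately using the results just established.

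For the ``only if'' direction, suppose $\n_{\G,4}$ admits a nice basis. If $\#V\le 1$ or $E=\emptyset$, then $\G$ has no path on three vertices and there is nothing to prove. Otherwise the preceding proposition applies directly and shows that $\G$ cannot contain $\G'$ as a subgraph.

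For the ``if'' direction, assume $\G$ contains no path $v_1 - v_2 - v_3$.

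\textbf{Step 1: structure of $\G$.} Every vertex then has degree at most one; two edges at a common vertex would form a copy of $\G'$. So $E$ is a matching: $\G$ is a disjoint union of $k$ edges $\{v_{2i-1},v_{2i}\}$ and $m$ isolated vertices $w_1,\dots,w_m$, as in the picture above.

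\textbf{Step 2: identify $\n_{\G,4}$.} I would show $\n_{\G,4}\cong \n_{2,4}^{k}\oplus\R^m$ via the universal property. Let $\g:=\n_{2,4}^k\oplus\R^m$, and send $v_{2i-1},v_{2i}$ to the two free generators of the $i$-th copy of $\n_{2,4}$ and $w_j$ to the $j$-th basis vector of $\R^m$. Non-adjacent vertices then go to commuting elements, so we obtain a morphism $\varphi:\n_{\G,4}\to\g$. It is surjective, since its image contains generators of $\g$.

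Conversely, the subalgebra of $\n_{\G,4}$ generated by $v_{2i-1},v_{2i}$ is a quotient of $\n_{2,4}$. Subalgebras coming from different components commute: brackets of a generator of one block with the other block vanish, and by the Jacobi identity this propagates to all iterated brackets. The $w_j$ are central for the same reason. Hence $\n_{\G,4}$ is spanned by a sum of images of $\n_{2,4}$ and the $w_j$, which gives $\dim\n_{\G,4}\le k\dim\n_{2,4}+m=\dim \g$. Thus $\varphi$ is an isomorphism.

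\textbf{Step 3: produce a nice basis.} By the cited result from \cite{Conti-Rossi-Ad-invariant}, $\n_{2,4}$ admits a nice basis, since $c\le 4$ and $d=2$. Concretely, one can take the Lyndon basis
\[X,\ Y,\ [X,Y],\ [X,[X,Y]],\ [Y,[X,Y]],\ [X,[X,[X,Y]]],\ [Y,[X,[X,Y]]],\ [Y,[Y,[X,Y]]].\]
Direct sums of nice bases are nice, as observed at the start of Section~\ref{sect. direct-sum}, and adjoining a basis of the abelian factor preserves niceness. This gives a nice basis of $\g$, and hence of $\n_{\G,4}$.

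The main obstacle is the dimension count in Step 2, namely that the components genuinely commute. This is a short Jacobi-identity induction on bracket length, so I expect it to be routine.
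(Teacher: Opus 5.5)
Your proof is correct and takes the same route as the paper. The ``only if'' direction is exactly the preceding proposition. For the ``if'' direction you reduce to a matching plus isolated vertices, identify $\n_{\G,4}\cong\n_{2,4}^k\oplus\R^m$, and use a nice basis of $\n_{2,4}$. You also supply details the paper leaves to the reader: the commuting-blocks dimension count that makes the universal-property map an isomorphism, and an explicit nice Lyndon basis of $\n_{2,4}$. That basis is nice because $[Y,[X,[X,Y]]]=[X,[Y,[X,Y]]]$ is hit only by the disjoint pairs $\{Y,[X,[X,Y]]\}$ and $\{X,[Y,[X,Y]]\}$.
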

	\subsection{Higher nilpotency classes}
	For nilpotency class $5$ we can use a similar argument as in \Cref{prop. 3-cycle}.
	\begin{proposition}
		Let $\G=(V,E)$ be a graph, then $\n_{\G,5}$ admits a nice basis if and only if $E=\emptyset.$
	\end{proposition}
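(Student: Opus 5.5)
If $E=\emptyset$, then $\n_{\G,5}$ is abelian, since every bracket of two generators lies in $I_\G$. Every basis of an abelian Lie algebra is trivially nice, so this direction is immediate.

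For the converse, suppose $E\neq\emptyset$ and pick an edge $\{v_1,v_2\}\in E$. Let $\G'$ be the subgraph with vertices $\{v_1,v_2\}$ and this single edge. Then $\n_{\G',5}\cong\n_{2,5}$ is the free $5$-step nilpotent Lie algebra on two generators. By the proposition from \cite[Theorem 3.11]{Conti-Rossi-Ad-invariant} quoted above, this algebra admits no nice basis, since $c=5>4$.

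Now assume, for a contradiction, that $\{X_1,\dots,X_n\}$ is a nice basis of $\n_{\G,5}$. Let $\pi:\n_{\G,5}\to\n_{\G',5}$ be the surjection from \Cref{lem. pi:G->G'}. The images $\pi(X_i)$ span $\n_{\G',5}$, so after reordering we may assume that $\pi(X_1)$ and $\pi(X_2)$ project onto a basis of $\n_{\G',5}/\gamma_2(\n_{\G',5})$.

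Let $\n$ be the subalgebra of $\n_{\G,5}$ generated by $X_1$ and $X_2$. We show $\n$ is free $5$-step nilpotent on $X_1,X_2$, exactly as in the proof of \Cref{prop. 3-cycle}. Given a $5$-step nilpotent $\h$ and prescribed images of $X_1,X_2$, note that $\pi(X_1),\pi(X_2)$ are free generators of $\n_{2,5}$. The universal property then gives a morphism $\varphi:\n_{\G',5}\to\h$, and $\varphi\circ\pi|_{\n}$ realizes the prescribed map. Uniqueness holds because $\n$ is generated by $X_1,X_2$.

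It remains to see that $\n$ inherits a nice basis, which is the step that needs care. The subalgebra $\n$ is spanned by iterated brackets of $X_1,X_2$. Since the basis is nice, each nonzero iterated bracket of basis elements is a nonzero multiple of a single basis element. Hence $\n$ is spanned by the subset $\mathcal{B}'\subseteq\{X_1,\dots,X_n\}$ of basis elements arising (up to scalar) as such brackets, together with $X_1,X_2$. This subset is closed under brackets up to scalars, and conditions (1) and (2) of the definition are inherited by subsets closed in this way. So $\mathcal{B}'$ is a nice basis of $\n\cong\n_{2,5}$, contradicting the cited proposition. Therefore $E=\emptyset$.

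The main thing to verify is that the span of iterated brackets is really spanned by basis vectors. This follows because brackets of basis vectors are multiples of basis vectors, so by induction on bracket length every iterated bracket of $X_1,X_2$ is a multiple of some $X_k$. The argument in \Cref{prop. 3-cycle} used this implicitly.
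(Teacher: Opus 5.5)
Your proof is correct and follows the paper's argument essentially step for step. Like the paper, you restrict to a single edge, use the surjection onto $\n_{\G',5}\cong\n_{2,5}$ to show that the subalgebra generated by two suitable nice basis vectors is free $5$-step nilpotent, and then invoke the cited classification of free nilpotent Lie algebras with a nice basis. Your extra paragraph shows that this subalgebra is spanned by a subset of the nice basis and therefore inherits a nice basis; this is correct and makes explicit a step the paper leaves implicit.
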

	\begin{proof}
		The \textit{`if'}-part is trivial since $\n_{\G,5}$ is abelian if $E=\emptyset$. 	Conversely, suppose by contradiction that $\n_{\G,5}$ admits a nice basis $\{X_1,\dots, X_n\}$ and $|E|\geq 1$. It then follows that $\G$ has  a complete subgraph $\G' $ with two vertices. We find from \Cref{lem. pi:G->G'} that there is a surjective Lie algebra morphism
		\[\pi:\n_{\G,5}\rightarrow\n_{\G', 5}.\]
		Let $p:\n_{\G',5}\rightarrow\n_{\G',5}/\gamma_2(\n_{\G',5})$ denote the projection morphism. Without loss of generality we can now suppose that $\{(p\circ\pi)(X_1),(p\circ\pi)(X_2)\}$ forms a basis for $\n_{\G',5}/\gamma_2(\n_{\G',5})$. 
		
		We now claim that the Lie subalgebra $\n$ of $\n_{\G,5}$ generated by $X_1$ and $X_2$ is a free $5$-step nilpotent Lie algebra. To show this we will use the universal property. Let $f:\{X_1,X_2\}\rightarrow \mathfrak{h}$ be any map, where $\mathfrak{h}$ is a $5$-step nilpotent Lie algebra. Since $\pi(X_1)$ and $\pi(X_2)$ are generators for $\n_{\G',5}$, it follows that we can find a unique Lie algebra morphism 
		\[\varphi:\n_{\G',5}\rightarrow \mathfrak{h}\] 
		such that for all $i\in\{1,2\}$ we have that $\varphi(\pi(X_i))=f(X_i).$ We can now look at the composition \[\varphi\circ \pi:\n\rightarrow \mathfrak{h}.\] Uniqueness follows from the fact that $\n$ is generated by $\{X_1,X_2\}.$ 
		
		Since we assumed that $\{X_1,\dots,X_n\}$ is a nice basis for $\n_{\G,5}$, we conclude that $\n\cong\n_{2,5}$ has a nice basis as well which leads to a contradiction.
	\end{proof}
	From the result above we can deduce the following.
	\begin{corollary}
		Let $\G=(V,E)$ be a graph and $c\geq 5$, then $\n_{\G,c}$ admits a nice basis if and only if $E=\emptyset.$
	\end{corollary}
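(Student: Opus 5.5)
The statement to prove is the Corollary: for $c\geq 5$, $\n_{\G,c}$ admits a nice basis if and only if $E=\emptyset$. The plan is to reduce every class $c\geq 5$ to the case $c=5$, using the fact that a nice basis descends to suitable quotients.

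The `if' direction is immediate. If $E=\emptyset$, then $\n_{\G,c}$ is abelian, since every generator commutes with every other generator and the generators span the whole algebra. Any basis of an abelian Lie algebra is nice.

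For the converse, suppose $\n_{\G,c}$ has a nice basis $\mathcal{B}$ with $c\geq 5$ and, for a contradiction, $|E|\geq 1$. The first observation is
\[\n_{\G,c}/\gamma_6(\n_{\G,c})\cong\n_{\G,5}.\]
This follows from the construction: the quotient of the free $c$-step nilpotent algebra by $\gamma_6$ is the free $5$-step nilpotent algebra. The ideal $I_\G$ maps onto the corresponding ideal in $\n_5(V)$, which gives the isomorphism. Equivalently, apply the universal property in both directions.

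The second step is to show that a nice basis passes to the quotient by a term of the lower central series. By \Cref{lem. UCS/LCS-adapted}, $\gamma_6(\n_{\G,c})$ is spanned by $\mathcal{B}\cap\gamma_6$. Hence the images of the remaining elements of $\mathcal{B}$ form a basis $\bar{\mathcal{B}}$ of the quotient. The structure constants of $\bar{\mathcal{B}}$ are exactly the constants $c_{ij}^k$ of $\mathcal{B}$ with $i,j,k$ restricted to the indices outside $\gamma_6$. Both niceness conditions are inherited under this restriction, so $\bar{\mathcal{B}}$ is nice. Therefore $\n_{\G,5}$ admits a nice basis, and the preceding proposition forces $E=\emptyset$, a contradiction.

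The only step needing care is the identification of the quotient with $\n_{\G,5}$. If one prefers to avoid it, there is an alternative route that repeats the proof of the preceding proposition. Take an edge and project via \Cref{lem. pi:G->G'} to $\n_{\G',c}\cong\n_{2,c}$. Find two nice basis elements generating a subalgebra isomorphic to $\n_{2,c}$; this subalgebra inherits the nice basis. Then quotient by $\gamma_6$ to get a nice basis on $\n_{2,5}$, which is impossible because $c>4$.
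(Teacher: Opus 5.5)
Your proof is correct and is essentially the paper's argument: the paper also realizes $\n_{\G,c}$ as a quotient by a lower central series term (via $\n_{\G,c}=\n_{\G,c+1}/\gamma_{c+1}(\n_{\G,c+1})$), uses that a nice basis descends to such quotients, and then applies the class-$5$ proposition. The differences are cosmetic: you quotient by $\gamma_6$ in one step rather than inductively, and you justify the descent of nice bases via \Cref{lem. UCS/LCS-adapted}, whereas the paper only states it as a general fact.
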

	\begin{proof}
	Note that for a fixed graph $\G$ it holds that
		\[\n_{\G,c}=\frac{\n_{\G,c+1}}{\gamma_{c+1}(\n_{\G,c+1})}.\]
	The result now follows from the general fact that if a Lie algebra $\g$ admits a nice basis, then the projection of this basis onto $\g/\gamma_i(\g)$ gives rise to a nice basis for $\g/\gamma_i(\g)$. 
	\end{proof}
	\bibliographystyle{plain}
	\bibliography{references}
\end{document}